\documentclass[twoside]{article}

\usepackage[accepted]{aistats2023}
%
%


\bibliographystyle{apalike}

\usepackage[round]{natbib}

\bibliographystyle{apalike}

\usepackage[utf8]{inputenc} 
\usepackage[T1]{fontenc}    
\usepackage{hyperref}       
\usepackage{url}            
\usepackage{booktabs}       
\usepackage{amsfonts}       
\usepackage{nicefrac}       
\usepackage{microtype}      
\usepackage{xcolor}         

\usepackage{amsmath}
\usepackage{amssymb}
\usepackage{mathtools}
\usepackage{amsthm}
\usepackage{enumitem}
\usepackage{booktabs}
\usepackage{mathtools}				
\DeclareMathOperator{\rk}{rank}	
\DeclareMathOperator{\rank}{rank}	
\DeclareMathOperator{\tr}{tr}	    
\DeclareMathOperator{\vecc}{vec}	    
\DeclareMathOperator{\mat}{mat}	    
\DeclareMathOperator{\st}{s.t.}
\DeclareMathOperator*{\argmin}{arg\,min}        

\newcommand{\RR}{\mathbb R}
\DeclareMathOperator{\RIP}{RIP}

\DeclarePairedDelimiter{\norm}{\lVert}{\rVert}

\newcommand{\projr}{\mathcal{P}_r}

\DeclareMathOperator{\dist}{dist}
\def\de{{\rm d}}

\usepackage{amsthm}
\newtheorem{theorem}{Theorem}

\newtheorem{corollary}{Corollary}
\newtheorem{lemma}{Lemma}
\theoremstyle{definition}
\newtheorem{definition}{Definition}

\newtheorem{assumption}{Assumption}
\theoremstyle{remark}

\usepackage{subcaption}

\begin{document}

%
\runningtitle{Noisy Low-rank Matrix Optimization}

%

\twocolumn[

\aistatstitle{Noisy Low-rank Matrix Optimization: \\ Geometry of Local Minima and Convergence Rate}

\aistatsauthor{ Ziye Ma \And Somayeh Sojoudi}

\aistatsaddress{Department of EECS \\University of California, Berkeley \And Department of EECS \\
   Department of Mechanical Engineering \\
   University of California, Berkeley } ]

\begin{abstract}
  This paper is concerned with low-rank matrix optimization, which has found a wide range of applications in machine learning. This problem in the special case of matrix sensing has been studied extensively through the notion of Restricted Isometry Property (RIP), leading to a wealth of results on the geometric landscape of the problem and the convergence rate of common algorithms. However, the existing results can handle the problem in the case with a general objective function subject to noisy data only when the RIP constant is close to 0. In this paper, we develop a new mathematical framework to solve the above-mentioned problem with a far less restrictive RIP constant. We prove that as long as the RIP constant of the noiseless objective is less than $1/3$, any spurious local solution of the noisy optimization problem must be close to the ground truth solution. By working through the strict saddle property, we also show that an approximate solution can be found in polynomial time. We characterize the geometry of the spurious local minima of the problem in a local region around the ground truth in the case when the RIP constant is greater than $1/3$. Compared to the existing results in the literature, this paper offers the strongest RIP bound and provides a complete theoretical analysis on the global and local optimization landscapes of general low-rank optimization problems under random corruptions from any finite-variance family.
\end{abstract}

\section{INTRODUCTION}
In this work, we focus on the problem of noisy matrix optimization:
\begin{equation}
	\label{eq:unfactored_noisy_problem}
	\min_{M \in \mathbb{R}^{n \times n}} f(M,w) \ \ \text{s.t.} \ \rk(M) \leq r, M \succeq 0,
\end{equation}
where the objective $f$ takes in two input variables: a low-rank, positive semidefinite matrix $M \in \mathbb{R}^{n \times n}$ and a random variable $w \in \mathbb{R}^m$ that represents some corruption to the objective function. The noise can come from any arbitrary distribution as long as it has a finite variance. We denote the maximum rank of the variable $M$ to be $r$. We optimize \eqref{eq:unfactored_noisy_problem} only with respect to the first variable $M$, while $w$ is assumed to be hidden to the user. The randomness of the parameter $w$ comes from the stage prior to solving \eqref{eq:unfactored_noisy_problem}, which accounts for uncertainty in the model/data or external  factors. Therefore, when the non-convex low-rank optimization is performed, $w$ will not change anymore even though it is unknown to the user. Let $M^*$ be a rank-$r$ matrix that minimizes the function $f(M,0)$ subject to the constraints in \eqref{eq:unfactored_noisy_problem}. This problem has a wide range of applications, the most notable ones being matrix sensing \citep{recht2010guaranteed}, matrix completion \citep{candes2009exact}, and robust PCA \citep{candes2011robust}. This formulation also has extensive applications in recommender systems \citep{koren2009matrix}, motion detection \citep{fattahi2020exact,anderson2019global}, phase synchronization/retrieval \citep{singer2011angular,boumal2016nonconvex,shechtman2015phase}, and power system estimation \citep{zhang2017conic}. The matrix $M^*$ is called the ground truth solution since the objective function is set up to be nonnegative and that $f(M^*,0) = 0$ for most of the above-mentioned applications. The goal is to find the matrix closest to $M^*$ in terms of Frobenius norm under the rank constraint. However, the influence of noise is not well studied in the literature due to the complications it may bring.

The major innovation of this work is the analysis of the effect of noise, where the objective function is subject to random corruption that is unknown to the user. This formulation is important yet oftentimes glossed over due to its challenging mathematical analysis, partly due to the sophisticated relationship between each globally optimal solution $M$ and the vector $w$. For instance, consider the canonical example of the matrix sensing problem, where the objective function is quadratic:
\begin{equation}\label{eq:linear_noisy_problem}
\begin{aligned}
\min_{M \in \RR^{n \times n}} \quad & \frac{1}{2}\norm{\mathcal A(M)- \tilde b}^2 \\
\st \quad & \rank(M) \leq r, \quad M \succeq 0.
\end{aligned}
\end{equation}
Here, $\mathcal A:\RR^{n \times n} \to \RR^m$ is a linear operator whose action on the matrix $M$ is given by
\[
\mathcal{A}(M)=[\langle A_1, M \rangle, \dots, \langle A_m, M \rangle]^\top,
\]
where $A_1,\dots,A_m \in \mathbb{R}^{n \times n}$. $\tilde b=\mathcal A(M^*) + w$ represents perfect measurements on some ground truth $M^*$ plus some noise $w$. The user only observes $\tilde b$ and has no access to noieless measurements, which means that the matrix $M^*$ of interest is the global minimum of \eqref{eq:linear_noisy_problem} only when $w=0$. When $w \neq 0$, the global minimum of \eqref{eq:linear_noisy_problem} would likely differ from $M^*$. In this case, it is desirable to study whether local search algorithms can converge to a point that is close to $M^*$ with high probability. Other applications such as matrix completion and robust PCA also suffer from the same conundrum since they all aim to align a given matrix to some partially observed matrix that is corrupted by unknown noise. In real-life problems, the corruptions induced by noise cannot be ignored or circumvented because they usually come from physical sources. For instance, in the power grid state estimation problem, which can be formulated as matrix sensing \citep{jin2019towards}, measurements come from physical devices and the noise can be originated from mechanical failures, bad weather, and even cyber-attacks.

Due to the existence of a rank constraint, the optimization problem \eqref{eq:unfactored_noisy_problem} is non-convex. Thus, local search algorithms can potentially converge to poor local minimizers, defeating the purpose of solving \eqref{eq:unfactored_noisy_problem}. Although \eqref{eq:unfactored_noisy_problem} may be solved via convex relaxations for different classes of $f(\cdot,\cdot)$ to overcome the non-convexity issue when $f(\cdot,0)$ is quadratic \citep{candes2009exact,recht2010guaranteed,candes2010power}, the computational challenge associated with solving semidefinite programming problems is prohibitive for large-scale problems. This has inspired many papers to solve \eqref{eq:unfactored_noisy_problem} via the Burer-Monteiro factorization \citep{burer2003nonlinear} by factoring $M$ into $XX^\top$, where $X \in \mathbb{R}^{n \times r}$, since $M$ is positive semidefinite and has rank at most $r$. By doing so, one can convert the constrained optimization \eqref{eq:unfactored_noisy_problem} into an unconstrained problem. Specifically, we solve the following problem instead of \eqref{eq:unfactored_noisy_problem}:
\begin{equation}
	\label{eq:main_noisy_problem}
	\min_{X \in \mathbb{R}^{n \times r}} f(XX^\top,w)
\end{equation}
The main issue with \eqref{eq:main_noisy_problem} is that it is still a non-convex problem, despite being more scalable and easier to deal with computationally. To address this issue, a popular line of research in the literature is to study the optimization landscape of \eqref{eq:main_noisy_problem}. Namely, the goal is to find the distance between the furthest local minimum and the global minimum, in addition to studying the convergence rate of local search methods in terms of the geometry of the optimization landscape.

This line of work usually assumes the Restricted Isometry Property (RIP) for the problem, which is defined below:
\begin{definition}
Given a fixed parameter $w$ and integers $r_1$ and $r_2$, the function $f(\cdot,w): \mathbb{R}^{n \times n} \times \mathbb{R}^m \mapsto \mathbb{R}$ is said to satisfy the restricted isometry property of rank $(2r_1,2r_2)$ with the constant $\delta \in [0,1)$, denoted as $\delta$-RIP$_{2r_1,2r_2}$, if the inequality
\[
(1-\delta)\|N\|_F^2 \leq [\nabla^2_M f(M,w)] (N,N) \leq (1+\delta) \|N\|_F^2,
\]
holds for all $M, N \in \mathbb{R}^{n \times n}$ with $\rk(M) \leq 2r_1$ and $\rk(N) \leq 2r_2$.
\end{definition}
Note that $[\nabla^2 f(\cdot,\cdot)]$ is a quadratic form from $\mathbb{R}^{n \times n} \times \mathbb{R}^{n \times n}$ to $\mathbb{R}$. The precise definition will be given in Section \ref{sec:notation}. Usually, the RIP property is defined for sensing operators under the matrix sensing setting, but here we generalize this notion to any arbitrary objective function under noise. More details of RIP may be found in \cite{recht2010guaranteed,zhu2018global}, and it is also known as $(1-\delta)$-restricted strong convexity and $(1+\delta)$-restricted smoothness.

\subsection{Related Works}
We first discuss the line of work that focuses on certifying the in-existence of spurious local minima in the noiseless setting (a local minimum that is not a global minimum is called {\it spurious}). \cite{bhojanapalli2016global} analyzes the absence of spurious local minima under the RIP condition for the matrix sensing problem, or in other words when $f(\cdot,w)$ is quadratic. This study states that $\delta \leq 1/5$ is a sufficient condition. \cite{zhu2018global,li2019non} investigate arbitrary objective functions under the RIP constant $\delta \leq 1/5$. The series of work \citep{zhang2019sharp,zhang2020many} show that the bound $\delta < 1/2$ is a sharp bound for guaranteeing the absence of spurious local minima in the case when the objective function is quadratic. The state-of-the-art result for general objective functions is proved in \cite{bi2021local}'s paper, which states that $\delta < 1/2$ is also sufficient for the absence of spurious local minima.

In the noisy case, \cite{zhang2018primal} proves that all local minima are close to the ground truth when $\delta \leq 1/35$ for a general objective, which is an extremely strong assumption on $\delta$. Furthermore, \cite{zhang2018primal} requires the RIP condition to be satisfied for the noisy problem rather than its noiseless counterpart, which is impossible to verify beforehand due to the unknown noise. For specific objective functions in the form of \eqref{eq:linear_noisy_problem}, \cite{ma2021sharp} shows that $\delta < 1/2$ is sufficient and necessary for the absence of spurious local minima, even when $w$ is sampled from an arbitrary finite-variance family. The major difference between \cite{ma2021sharp} and this work is that we focus on a general objective, while \cite{ma2021sharp} only focuses on a quadratic objective (the matrix sensing objective).

In terms of the convergence of local search methods, \cite{wang2017unified} proves that the gradient descent algorithm applied to \eqref{eq:main_noisy_problem} converges linearly when the initialization is good, given that $\delta < 1/7$. Similarly to \cite{zhang2018primal}, this RIP bound is given with respect to the noisy problem rather than its noiseless version, which is an undesirable feature. For a general noiseless objective (in the case when $w=0$), \cite{bi2021local} proves that there exists a region around $M^*$ in which linear convergence can be established. On the other hand, \cite{bi2021local} also proves that a RIP constant of $\delta < 1/2$ is sufficient for the global establishment of the strict saddle property for a general noiseless objective function. As noted in \cite{JGNK2017}, the strict saddle property can lead to a polynomial convergence to a global optimum with a random initialization. The exact definition of the strict saddle property can be found in \cite{ge2017no}, and it basically states that all approximate local optima must be close to the global optima. In the noisy setting, \cite{ma2021sharp} demonstrates that $\delta < 1/2$ is necessary and sufficient to the establishment for the strict saddle property for quadratic objective functions.

\subsection{Main Contributions}
The contribution of this work is fourfold:
\begin{enumerate}
	\item First, we show that if the noiseless objective function $f(\cdot,0)$ satisfies the $\delta$-RIP$_{2r,2r}$ property with $\delta < 1/3$, then all local minima of the noisy objective \eqref{eq:main_noisy_problem} are in the vicinity of the ground truth solution $M^*$, where the distance to $M^*$ is proven to be a function of the noise intensity and $\delta$. The state-of-the-art result requires that $f(\cdot,w)$ satisfy the $\delta$-RIP$_{6r,6r}$ property with $\delta < 1/35$, which is a much stronger assumption than ours. Moreover, since the RIP constant of $f(\cdot,w)$ is impossible to verify due to the randomness of $w$, we impose the RIP condition on the noiseless function $f(.,0)$.
	\item In the case when $\delta \geq 1/3$, we show that there is a local area around the ground truth solution $M^*$ such that any local minimum of \eqref{eq:main_noisy_problem} in this region must be very close to $M^*$. The size of this local area is parametrized by some constant $\tau$ with the property that as $\tau$ decreases, the local minima will be more tightly concentrated around $M^*$.
	\item We prove that \eqref{eq:main_noisy_problem} exhibits the strict saddle property globally when $\delta < 1/3$. This means that there exists an algorithm that can reach the global minimum in polynomial time with a random initialization.
	\item Finally, it is proved that there exists a region around the global minimum in which the vanilla Gradient Descent algorithm converges linearly on \eqref{eq:main_noisy_problem} for any arbitrary $\delta$. This result was previously established in the literature only when the RIP constant of $f(\cdot,w)$ is smaller than $1/7$.
\end{enumerate}

To highlight our improvements over the existing results, Table~\ref{table:results_comparison} lists some state-of-the-art comparable works to showcase the strength of the guarantees provided in this paper. Note that when we denote the objective function as "General Noisy", it means that the function is in the form of \eqref{eq:main_noisy_problem} and satisfies the RIP property. We further denote the objective function of \eqref{eq:linear_noisy_problem} as "Quadratic Noisy", which is also known as the matrix sensing problem. In particular, according to \cite{candes2011tight}, $\mathcal{O}(1/\delta^2)$ number of random Gaussian measurements are required to ensure $\delta$-RIP$_{2r}$, so a RIP constant of $1/3$ vs $1/35$ introduces a difference in sample number requirement of around $35^2/3^2 \approx 100$ times. Also since \cite{zhang2018primal} requires $\delta$-RIP$_{6r}$, even more measurements are needed.

\begin{table*}
\centering
\renewcommand*{\arraystretch}{1.5}
\caption{Comparison between our result and the prior literature.}
\begin{tabular}{lllll}
\toprule
\multicolumn{1}{l}{Paper} & \multicolumn{1}{l}{Objective function} & \multicolumn{1}{l}{Quality of Local Min} & \multicolumn{1}{l}{Strict Saddle} &\multicolumn{1}{l}{Convergence} \\\midrule
\cite{zhang2018primal} & General Noisy & $\delta < 1/35$ & N/A & N/A                     \\
\cite{wang2017unified} & General Noisy & N/A & N/A & Linear rate with $\delta<1/7$  \\
\cite{ma2021sharp} & Quadratic Noisy & $\delta \leq 1/2$ & $\delta<1/2$ & N/A \\
Ours & General Noisy & $\delta<1/3$ &$\delta<1/3$ & Linear rate with arbitrary $\delta$  \\
\bottomrule
\end{tabular}
\label{table:results_comparison}

\end{table*}

\section{PRELIMINARIES}

\subsection{Assumptions on the objective function}
\label{sec:assump}
The assumptions stated in this section serve as the underpinnings of all the theorems in this paper, and they mainly require that the objective function be smooth with respect to both the decision variable $X$ and the noise $w$. To clarify, these assumptions do not pose any restriction on $w$, and this parameter can come from any probability distribution.
\begin{description}
	\item[Assumption 1.] The objective function $f(\cdot,\cdot)$ is twice continuously differentiable with respect to its first argument $M$.
	\item[Assumption 2.] The noiseless objective function $f(\cdot,0)$ satisfies the $\delta$-RIP$_{2r,2r}$ property for some constant $\delta \in [0,1)$.
	\item[Assumption 3.] The noise $w$ has a finite influence on the gradient and Hessian of the objective function in the sense that there exist two constants $\zeta_1 \geq 0$ and $\zeta_2 \geq 0$ such that
	\begin{align}
\begin{split}
&| \langle \nabla_M f(M,w) - \nabla_M f(M,0), K \rangle | \leq \\&\zeta_1 \|w\|_2 \|K\|_F,
\end{split}\label{eq:noise_perturb_grad} \\
\begin{split}
&| [\nabla^2_M f(M,w) - \nabla^2_M f(M,0)](K,L) | \leq  \\
&\zeta_2 \|w\|_2 \|K\|_F \|L\|_F
\end{split}\label{eq:noise_perturb_hess}
\end{align}
for all matrices $M,K,L \in \mathbb{R}^{n \times n}$ with $\rk(M), \rk(K), \rk(L) \leq 2r$.

\end{description}

As an example, for the standard matrix sensing problem \eqref{eq:linear_noisy_problem} with the sensing matrix $\mathcal{A}$, if $\mathcal{A}$ satisfies the RIP property, then all of these assumptions hold with $\zeta_1 = \|\mathcal{A}\|_2$ and $\zeta_2 =0$. The 1-bit matrix completion problem is also an example that satisfies the above assumptions which will be elaborated in Section \ref{sec:num_conv}. Note that although in our problem statements we assume $M$ to be symmetric and positive semidefinite, our framework can also be adapted to deal with non-symmetric and non-square matrix $M$. A more detailed discussion is provided in Appendix \ref{sec:appendix_asymmetric}.

\subsection{Notation}
\label{sec:notation}
In this paper, $I_n$ refers to the identity matrix of size $n \times n$. The notation $M \succeq 0$ means that $M$ is a symmetric and positive semidefinite matrix. $\sigma_i(M)$ denotes the $i$-th largest singular value of a matrix $M$, and $\lambda_i(M)$ denotes the $i$-th largest eigenvalue of $M$. $\norm{v}$ denotes the Euclidean norm of a vector $v$, while $\norm{M}_F$ and $\norm{M}_2$ denote the Frobenius norm and the operator norm, respectively. The inner product $\langle A,B \rangle$ is defined to be $\tr(A^\top B)$ for two matrices $A$ and $B$ of identical dimensions. For a matrix $M$, $\vecc(M)$ is the usual vectorization operation by stacking the columns of the matrix $M$ into a vector. The Hessian of the function $f(\cdot,\cdot)$ in \eqref{eq:main_noisy_problem} with respect to the first argument $M$, denoted as $\nabla^2_M f(\cdot,\cdot)$, can be regarded as a quadratic form whose action on any two matrices $K,L \in \RR^{n \times n}$ is given by
\[
[\nabla^2_M f(M,w)](K,L)=\sum_{i,j,k,l=1}^n\frac{\partial^2f}{\partial M_{ij}\partial M_{kl}}(M,w)K_{ij}L_{kl}.
\]
In this paper, $\nabla^2_M f(M,w)$ and $\nabla^2 f(M,w)$ are used interchangeably since $w$ is an unknown fixed parameter and it is impossible to take a derivative with respect to $w$.

Define $M^* \in \argmin_M f(M,0)$. We also characterize the distance of an arbitrary factorized point $X \in \mathbb{R}^{n \times r}$ to a rank-$r$ positive semidefinite matrix $M$ with the function $\dist(X,M)$, defined as:
\begin{align*}
	&\dist(X,M) = \min_{Z \in \mathcal{Z}} \|X-Z\|_F, \\
	 &\mathcal{Z} = \{Z \in \mathbb{R}^{n \times r}\ | \  M = ZZ^\top \}.
\end{align*}
Given a matrix $\hat X \in \mathbb{R}^{n \times r}$, define $\hat{\mathbf X} \in \mathbb R^{n^2 \times nr}$ to be the matrix satisfying
\[
	\hat{\mathbf X} \vecc(U) = \vecc(\hat XU^\top +U\hat X^\top), \quad \forall U \in \mathbb{R}^{n \times r}.
\]
Define $\projr(M)$ of an arbitrary matrix $M$ to be the projection of $M$ on a low-rank manifold of rank at most $r$:
\begin{align*}
	&\projr(M) = \argmin_{M_r \in \mathcal{M}} \| M_r -M\|_F, \\
	&\mathcal{M} \coloneqq \{M \in \mathbb{S}^{n \times n} \vert \rk(M) \leq r, M \succeq 0 \}
\end{align*}
For problem \eqref{eq:linear_noisy_problem}, $\mathbf A \in \mathbb{R}^{m \times n^2}$ is defined such that $\mathbf A \vecc(M) = \mathcal{A}(M)$.

Finally, define:
\[
	h(X,w) \coloneqq f(X X^\top,w).
\]

\section{GEOMETRY OF LOCAL MINIMA}

\subsection{When $\delta < 1/3$}
\label{sec:location_global}
When the RIP constant $\delta$ is smaller than $1/3$, we show that all local minima (or second-order critical points) of \eqref{eq:main_noisy_problem} are close to the ground truth solution $M^*$. The proximity to the ground truth is parametrized by the noise intensity defined as $q \coloneqq \|w\|_2$. When $q=0$, our result (Theorem \ref{thm:global_local_min}) recovers the results previously proved in \cite{ha2020equivalence,zhang2021general}.

\begin{theorem}
	\label{thm:global_local_min}
	Assume that the objective function of \eqref{eq:main_noisy_problem} satisfies Assumptions 1-3 and that $f(M,0)$ satisfies the RIP property with some $\delta$-RIP$_{2r,2r}$ constant such that $\delta < 1/3$. For every $ \epsilon \in [0,\frac{1/3-\delta}{\zeta_2})$, with probability at least $\mathbb P(\norm{w}_2
 \leq \epsilon)$, every local minimizer $\hat X$ of \eqref{eq:main_noisy_problem} satisfies:
	 \begin{equation}
	 	\label{eq:global_local_min_range}
	 	\|\hat X \hat X^\top - M^*\|_F \leq \frac{2 \zeta_1 \epsilon}{1-3(\delta+\zeta_2 \epsilon)}.
	 \end{equation}
\end{theorem}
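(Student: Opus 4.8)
The plan is to work with the Burer–Monteiro objective $h(X,w)=f(XX^\top,w)$ and exploit the first- and second-order optimality conditions at a local minimizer $\hat X$. Write $\hat M=\hat X\hat X^\top$ and let $e=\hat M-M^*$ be the error matrix, which has rank at most $2r$. The strategy is to find a well-chosen perturbation direction $U\in\RR^{n\times r}$ and combine $\langle\nabla h(\hat X,w),U\rangle=0$ with $[\nabla^2 h(\hat X,w)](U,U)\ge 0$ to derive a scalar inequality that forces $\|e\|_F$ to be small. The canonical choice, going back to the noiseless landscape analyses (e.g.\ \cite{zhang2021general,ha2020equivalence}), is to take $U$ so that $\hat X U^\top+U\hat X^\top$ equals (a multiple of) $e$ itself, or more precisely to use the direction associated with $\Delta=\hat X-Z^*R$ where $Z^*(Z^*)^\top=M^*$ and $R$ is the optimal rotation realizing $\dist(\hat X,M^*)$; then $\hat X\Delta^\top+\Delta\hat X^\top$ is a controlled surrogate for $e$.

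The key steps, in order. First, I would expand the gradient and Hessian of $h$ in terms of those of $f$: $\langle\nabla h(X,w),U\rangle=\langle\nabla_M f(XX^\top,w),XU^\top+UX^\top\rangle$, and $[\nabla^2 h(X,w)](U,U)=[\nabla^2_M f(XX^\top,w)](XU^\top+UX^\top,XU^\top+UX^\top)+2\langle\nabla_M f(XX^\top,w),UU^\top\rangle$. Second, I would split $\nabla_M f(\hat M,w)$ and $\nabla^2_M f(\hat M,w)$ into their noiseless parts plus noise remainders, bounding the remainders via Assumption 3: the gradient remainder contributes a term bounded by $\zeta_1 q\|\cdot\|_F$ and the Hessian remainder a term bounded by $\zeta_2 q\|\cdot\|_F^2$, where $q=\|w\|_2\le\epsilon$ on the event in question. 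Third, for the noiseless gradient, I would use the fact that $\nabla_M f(M^*,0)=0$ (since $M^*$ minimizes $f(\cdot,0)$ over a set containing a neighborhood in the relevant directions) together with the RIP-based integral/mean-value identity $\nabla_M f(\hat M,0)-\nabla_M f(M^*,0)=\mathcal H(e)$ for some operator $\mathcal H$ whose quadratic form is pinched between $(1-\delta)$ and $(1+\delta)$ on rank-$\le 2r$ arguments; this is exactly where $\delta$-RIP$_{2r,2r}$ enters. Fourth, I would assemble the terms: the second-order condition gives $0\le (1+\delta)\|\hat X U^\top+U\hat X^\top\|_F^2 + (\text{cross terms with }e) + \zeta_2\epsilon\|\cdot\|_F^2 + (\text{noise gradient term})$, while the first-order condition lets me eliminate the linear-in-$\nabla f$ pieces; after choosing $U=\Delta$ and using the standard inequalities relating $\|\hat X\Delta^\top+\Delta\hat X^\top\|_F$, $\|\Delta\Delta^\top\|_F$, and $\|e\|_F$ (e.g.\ $\|e\|_F\le\|\hat X\Delta^\top+\Delta\hat X^\top\|_F+\|\Delta\Delta^\top\|_F$ and the curvature-type bound $\|\Delta\Delta^\top\|_F\lesssim\|\hat X\Delta^\top+\Delta\hat X^\top\|_F$ valid for local minimizers), everything collapses to an inequality of the form $\big(1-3(\delta+\zeta_2\epsilon)\big)\|e\|_F\le 2\zeta_1\epsilon$, which rearranges to \eqref{eq:global_local_min_range}. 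The denominator being positive is guaranteed precisely by the hypothesis $\epsilon<(1/3-\delta)/\zeta_2$.

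The main obstacle I anticipate is the bookkeeping in step four: getting the constant in front of $\delta$ to be exactly $3$ rather than something larger requires the sharp version of the direction-choosing lemma and the tight rank-$2r$ RIP manipulations (decomposing $e$ and the perturbation into pieces whose ranks are controlled so that the RIP bounds apply with the right factors), and it requires carefully tracking how the second-order term $2\langle\nabla_M f,UU^\top\rangle$ interacts with the noise bound — this is the term that, combined with $\zeta_2\epsilon$, yields the $3(\delta+\zeta_2\epsilon)$ grouping. A secondary subtlety is justifying $\nabla_M f(M^*,0)=0$ and the mean-value representation of the gradient difference purely from Assumptions 1–2 (twice differentiability plus RIP), rather than from an explicit quadratic form; this is handled by integrating the Hessian along the segment from $M^*$ to $\hat M$, noting all intermediate points have rank $\le 2r$ so RIP applies throughout.
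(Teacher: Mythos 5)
Your proposal sketches the direction-based second-order argument (take $U=\Delta=\hat X-Z^*R$, invoke the conditions $\langle\nabla h,\Delta\rangle=0$ and $[\nabla^2 h](\Delta,\Delta)\geq0$, and bound the resulting expression via RIP and the algebraic identities relating $\hat X\Delta^\top+\Delta\hat X^\top$, $\Delta\Delta^\top$, and $e$). This is not what the paper does, and more importantly it cannot be made to deliver the $1/3$ constant: the $\Delta$-direction argument for general RIP objectives is precisely the route taken by \cite{zhu2018global,li2019non}, and it is known to bottom out at $\delta<1/5$. You flag this concern yourself (``getting the constant in front of $\delta$ to be exactly $3$ rather than something larger requires the sharp version of the direction-choosing lemma''), but the issue is not one of tighter bookkeeping --- the bound $\|\Delta\Delta^\top\|_F\lesssim\|\hat X\Delta^\top+\Delta\hat X^\top\|_F$ that you invoke does not in general collapse the second-order inequality to $(1-3\delta)\|e\|_F^2\leq 0$ (plus noise), and no amount of care within this framework gets past $1/5$.

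The paper's proof uses an entirely different mechanism, which it explicitly advertises as the main innovation. It first establishes Lemma~\ref{lem:lb_lamb_r}, a \emph{lower} bound on $\lambda_r(\hat X\hat X^\top)$: by perturbing $\hat M$ along a mixture of its $r$-th eigenvector and the most-negative eigenvector of $\nabla_M f(\hat M,w)$, and appealing to local minimality plus RIP to rule out descent, one obtains $\lambda_r(\hat M)\geq G/(1+\delta+\zeta_2 q)$ with $G=-\lambda_{\min}(\nabla_M f(\hat M,w))$. Separately, it forms the surrogate $\bar M=\hat M-\tfrac{1}{1+\delta+\zeta_2 q}\nabla_M f(\hat M,w)$, applies Eckart--Young--Mirsky to $\phi$ at $\projr(\bar M)$ vs.\ $M^*$, and uses two Taylor expansions (at $\hat M$ and at $M^*$) plus first-order orthogonality ($\nabla f(\hat M,w)\hat X=0$, so $\hat M$ and $N\propto\nabla f$ have orthogonal row/column spaces) to produce an \emph{upper} bound on $\lambda_r^2(\hat M)$ in terms of $G$ and $L=\tfrac{1-3\delta-3\zeta_2 q}{2}\|e\|_F^2-\zeta_1 q\|e\|_F$. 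Comparing the two bounds forces $L\leq0$, which rearranges to \eqref{eq:global_local_min_range}. The two Taylor expansions each contribute a $\delta$ and the Eckart--Young comparison a third, which is exactly where the factor $3$ comes from; this accounting is structurally unavailable in the $\Delta$-direction route. Your proposal is missing this central idea (the $\lambda_r$ lower bound and its comparison to the projection-derived upper bound), so as written it has a genuine gap that would leave you stuck at a worse constant.
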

This is a powerful theorem stating that as long as $\delta < 1/3$, all local minima are close to the ground truth solution, regardless of the family from which $w$ is sampled. Previously, the problem needed to satisfy $\delta < 1/35$ for similar results to hold. Furthermore, unlike \cite{bi2020global}, we achieved this result without the BDP assumption or requiring $r=1$. The upper bound in \eqref{eq:global_local_min_range} is a function of $\epsilon$ and $\delta$. The bound becomes loser as $\epsilon$ and $\delta$ increase. Note that $\zeta_1$ and $\zeta_2$ affect $\epsilon$ in a linear way and therefore obtaining non-conservative constants $\zeta_1$ and $\zeta_2$ is beneficial. 

Our result implies that for a  general objective function, geometric uniformity, captured by the RIP property, can guarantee a benign optimization landscape even when $\delta$ is non-trivially larger than 0. However, this comes with a caveat. In particular, if $\zeta_2 \neq 0$, meaning that the Hessian is affected by the existence of noise, then there is a hard "contribution floor" for the noise reflected by the inequality $\|w\|_2 \leq \frac{1/3-\delta}{\zeta_2}$. If the noise intensity goes beyond this hard limit, no high-probability guarantees can be made in terms of the locations of the local minima. This is expected because if $\zeta_2$ is large, it means that the RIP property satisfied for the noiseless problem cannot enforce any desirable property on the highly noisy problem and the benign optimization landscape is unlikely to hold.

The proof of Theorem \ref{thm:global_local_min} follows from the characterization of the $r$-th singular value of an arbitrary local minimizer $\hat X$. Previous results in the literature successfully upper-bounded the $r$-th singular value of $X$ that are far from the ground truth, which leads to the establishment of a significant escape direction based on its Hessian. The major innovation in the proof of Theorem~\ref{thm:global_local_min} is based on the observation that for every local minimizer $\hat X$, its $r$-th singular value can also be lower-bounded in terms of the smallest eigenvalue of the gradient at $\hat X$, and the RIP constant. Then we adopt some existing techniques to also upper-bound the $r$-th singular value of $\hat X$ to contrast it with the lower-bound. By doing so, we derive necessary conditions on the value of $\|\hat X \hat X^\top - M^*\|$, since the upper-bounds are carefully crafted to include this term. We believe that this new method of lower-bounding the $r$-th singular value of $\hat X$ could open up a new range of possible techniques for analyzing low-rank optimization problems, since it provides important complementary information on $\hat X$. The full proof is lengthy and deferred to Appendix \ref{sec:appendix_global}.


\subsection{When $\delta \geq 1/3$}
\label{sec:location_local}
Although Theorem~\ref{thm:global_local_min} is powerful in the case of $\delta < 1/3$, it does not provide any guarantee when $\delta \geq 1/3$, especially given the fact that $\delta$ is intrinsic to the sensing matrices, which are impossible to change. This is where a local version of the guarantee comes in handy. We only consider the optimization landscape in a region around the ground truth and show that local minimizers are all very close to $M^*$.
\begin{theorem}
	\label{thm:local}
	Assume that the objective function of \eqref{eq:main_noisy_problem} satisfies assumptions 1-3 with $f(M,0)$ satisfying the $\delta$-RIP$_{2r,2r}$ property for a constant $\delta \in [0,1)$. Consider an arbitrary number $\tau \in (0,1-\delta^2)$. Every local minimizer $\hat X \in \mathbb{R}^{n \times r}$ of \eqref{eq:main_noisy_problem} satisfying:
	\begin{equation}
		\label{eq:tau_range}
		\|\hat X \hat X^\top - M^*\|_F \leq \tau \lambda_r(M^*),
	\end{equation}
	will also satisfy the following inequality with probability at least $\mathbb P(\norm{w}_2
 \leq \epsilon)$:
	\begin{equation}
		\label{eq:local_range}
		\|\hat X \hat X^\top - M^*\|_F \leq \frac{\epsilon (1+\delta+\zeta_2 \epsilon) \zeta_1  C(\tau,M^*)}{\sqrt{1-\tau}- \zeta_2 \epsilon -\delta}
	\end{equation}
	for all $\epsilon < \frac{\sqrt{1-\tau}-\delta}{\zeta_2}$, where
	\[
		C(\tau,M^*)=\sqrt{\frac{2(\lambda_1(M^*)+\tau \lambda_r(M^*))}{(1-\tau)\lambda_r(M^*)} }.
	\]
\end{theorem}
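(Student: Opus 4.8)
The plan is to mimic the structure of the proof of Theorem~\ref{thm:global_local_min}, but now we exploit the a~priori bound \eqref{eq:tau_range} to gain control over the spectrum of $\hat X$ instead of having to derive it from scratch. The first step is to translate \eqref{eq:tau_range} into a two-sided estimate on the singular values of $\hat X$: by Weyl's inequality, $\|\hat X\hat X^\top - M^*\|_F \le \tau\lambda_r(M^*)$ forces $\sigma_r(\hat X)^2 \ge (1-\tau)\lambda_r(M^*) > 0$ and $\sigma_1(\hat X)^2 \le \lambda_1(M^*) + \tau\lambda_r(M^*)$. In particular $\hat X$ has full column rank $r$, so the optimal rotation $Z \in \mathcal Z$ with $M^* = ZZ^\top$ achieving $\dist(\hat X, M^*)$ is well behaved, and the standard inequality $\dist(\hat X,M^*)^2 \le \frac{1}{2(\sqrt2-1)\,\sigma_r(\hat X)^2}\,\|\hat X\hat X^\top - M^*\|_F^2$ (or the sharper variant used in the $\delta<1/3$ argument) lets us pass freely between $\|\hat X\hat X^\top - M^*\|_F$ and $\|\hat X - Z\|_F$. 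This is where the factor $C(\tau,M^*)$ enters: it is exactly the constant $\sqrt{2(\lambda_1(M^*)+\tau\lambda_r(M^*))/((1-\tau)\lambda_r(M^*))}$ coming from combining the two Weyl bounds.

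Next I would write down the first- and second-order optimality conditions for $\hat X$ as a local minimizer of $h(X,w) = f(XX^\top,w)$. The gradient condition $\nabla_M f(\hat X\hat X^\top,w)\hat X = 0$, combined with Assumption~3 (inequality \eqref{eq:noise_perturb_grad}), gives $\|\nabla_M f(\hat X\hat X^\top,0)\hat X\|_F \le \zeta_1 q\,\|\hat X\|_F$ where $q=\|w\|_2 \le \epsilon$. The second-order condition says $[\nabla^2 h(\hat X,w)](U,U)\ge 0$ for all $U$; choosing the test direction $U = \hat X - Z$ (the error direction), expanding $\nabla^2 h$ in terms of $\nabla^2_M f$ and $\nabla_M f$, and using Assumption~2 on the noiseless Hessian plus Assumption~3 on the noise perturbation of the Hessian (inequality \eqref{eq:noise_perturb_hess}), yields an inequality of the schematic form
\[
0 \le -(1-\delta-\zeta_2\epsilon)\,\|\hat X\hat X^\top - M^*\|_F^2 \;+\; (\text{cross terms involving }\langle\nabla_M f(\cdot,0),\cdot\rangle)\;+\;(\text{noise-gradient slack}),
\]
where the cross terms are bounded using the RIP-based quadratic form estimates (the same Lemma-type bounds invoked in the proof of Theorem~\ref{thm:global_local_min}) and the noise slack is $\lesssim \zeta_1\epsilon\,\|\hat X - Z\|_F \lesssim \zeta_1\epsilon\,C(\tau,M^*)\,\|\hat X\hat X^\top-M^*\|_F^{}$ after converting via the distance inequality from the first step. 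Rearranging, one side is quadratic in $e := \|\hat X\hat X^\top - M^*\|_F$ and the other linear in $e$, so dividing through by $e$ gives the claimed bound $e \le \epsilon(1+\delta+\zeta_2\epsilon)\zeta_1 C(\tau,M^*)/(\sqrt{1-\tau}-\zeta_2\epsilon-\delta)$, with the denominator being positive precisely under the stated restriction $\epsilon < (\sqrt{1-\tau}-\delta)/\zeta_2$. The probability statement is immediate: all of the above is deterministic once $\|w\|_2 \le \epsilon$, so it holds on that event, which has probability $\mathbb P(\|w\|_2\le\epsilon)$.

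The main obstacle, and the place where the $\tau$-dependence has to be handled carefully, is the bookkeeping in the second-order expansion: the test direction $U=\hat X - Z$ is not low-rank-$r$ in general but has rank at most $2r$, so one must check that all Hessian evaluations stay within the $\delta$-RIP$_{2r,2r}$ regime, and one must control $\|U\|_F = \dist(\hat X,M^*)$ against $e$ uniformly — this is exactly what fails without \eqref{eq:tau_range}, since $\sigma_r(\hat X)$ could otherwise be tiny and the conversion constant blows up. The appearance of $\sqrt{1-\tau}$ rather than $(1-\tau)$ in the denominator suggests that the sharp step is to bound a term like $\langle \nabla_M f(\hat X\hat X^\top,0), \hat X\hat X^\top - ZZ^\top\rangle$ not by brute RIP but by first peeling off $\sigma_r(\hat X)\ge\sqrt{(1-\tau)\lambda_r(M^*)}$ from $\nabla_M f(\hat X\hat X^\top,0)\hat X$ via the gradient condition, so that the "restricted strong convexity" constant effectively degrades from $1-\delta$ to $\sqrt{1-\tau}-\delta$; getting this peeling to be tight is the crux. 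Everything else is routine RIP manipulation of the kind already carried out for Theorem~\ref{thm:global_local_min}, and the details would be deferred to the appendix.
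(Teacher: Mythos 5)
Your proposal takes a genuinely different route from the paper, but it has a gap at exactly the point you yourself flag as ``the crux.'' The paper does not prove Theorem~\ref{thm:local} by plugging a test direction $U = \hat X - Z$ into the second-order optimality condition and rearranging. Instead, it works through the SDP-duality machinery of \cite{zhang2019sharp,zhang2020many,ma2021sharp}: first Lemma~\ref{lem:noisy_foc_nec} shows that any first-order critical point $\hat X$ gives rise to a ``pseudo sensing matrix'' $\mathbf H$ that is $(\delta+\zeta_2 q)$-RIP and satisfies $\|\hat{\mathbf X}^\top\mathbf H\mathbf e\| \le 2\zeta_1 q\|\hat X\|_2$; this turns the problem into the SDP \eqref{eq:local_lmi_delta} whose optimal value lower-bounds the true RIP constant; the Lagrangian dual \eqref{eqn:eta_dual} is then solved with an explicit feasible point built from the positive/negative parts of the rank-two matrix $(\hat{\mathbf X}y)\mathbf e^\top + \mathbf e(\hat{\mathbf X}y)^\top$; and the resulting upper bound on the optimal value depends on $\cos\theta$, the angle between $\hat{\mathbf X}y$ and $\mathbf e$, for a carefully chosen $y$ coming from Lemma 12 of \cite{zhang2020many}. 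The factor $\sqrt{1-\tau}$ is precisely the lower bound $\cos\theta \ge \sqrt{1-\tau}$, proved by a QR-decomposition argument that bounds $\sin^2\theta = \|Z^\top(I-\hat X\hat X^\dagger)Z\|_F^2/\|\hat X\hat X^\top - ZZ^\top\|_F^2 \le \tau$. None of this appears in your sketch.

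Concretely, the gap is this: your schematic second-order inequality with $U = \hat X - Z$ would, after brute RIP bookkeeping, give a denominator like $1-\tau$ (or worse, a nonsharp constant), not $\sqrt{1-\tau}$. You speculate that ``peeling off $\sigma_r(\hat X)$ from $\nabla_M f(\hat M,0)\hat X$'' degrades the restricted strong convexity constant from $1-\delta$ to $\sqrt{1-\tau}-\delta$, but that is not a derivation and I don't see how to make it one within the elementary framework you propose. The $\sqrt{1-\tau}$ genuinely comes from a squared quantity (the $\sin^2\theta$ bound) inside the dual SDP objective, and the direct second-order-condition route has no analogue of $\theta$. Your identification of where $C(\tau,M^*)$ comes from (Weyl bounds on $\sigma_1(\hat X)$ and $\sigma_r(\hat X)$) is essentially right, though in the paper it bounds $2\|\hat X\|_2\|y\|/\|\hat{\mathbf X}y\|$ in the dual objective rather than a distance-conversion constant. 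The probability step and the domain restriction $\epsilon < (\sqrt{1-\tau}-\delta)/\zeta_2$ are fine. But as written, the proposal is missing the central mechanism (Lemma~\ref{lem:noisy_foc_nec} plus the dual SDP plus the angle bound), and the step you acknowledge as the crux is not filled in, so this does not constitute a proof.
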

The upper bounds in \eqref{eq:tau_range} and \eqref{eq:local_range} define an outer ball and an inner ball centered at the ground truth $M^*$. Theorem~\ref{thm:local} asserts the absence of local minima in the ring between the two balls. As $\epsilon$ goes to $0$, Theorem~\ref{thm:local} states that no spurious local minima exists when $\|\hat X \hat X^\top - M^*\|_F \leq (1-\delta^2) \lambda_r(M^*)$. Therefore, this is a direct generalization of the results in \cite{bi2020global}, which holds only for noiseless objectives. This local theorem allows for the analysis of highly non-convex objectives associated with $\delta$ close to 1. In particular, Theorem~\ref{thm:local} states that even for highly non-convex objectives, the optimization landscape is benign in the vicinity of $M^*$. This means that if a good initial point is selected, local search algorithms can solve this highly non-convex problem and find a satisfactory approximate solution. 

The breakthrough of the proof of this theorem relies on the establishment of Lemma \ref{lem:noisy_foc_nec}, which states that for every local minimizer $\hat X$ of the noisy problem \eqref{eq:main_noisy_problem}, there is a pseudo sensing matrix $\mathbf H$ such that $\hat X$ is an approximate local minimizer of a matrix sensing problem with the sensing operator $\mathbf H$. This serves as the basis of the ensuing proof techniques, which follow the idea of certifying the in-existence of spurious local minima, inspired by \cite{Zhang2021-p,ma2021sharp}. A detailed proof can be found in Appendix \ref{sec:appendix_local}.

\section{CONVERGENCE RATE}

\begin{figure*}[t]
    \centering
    \begin{subfigure}{6.5cm}
    	    \includegraphics[width=\linewidth]{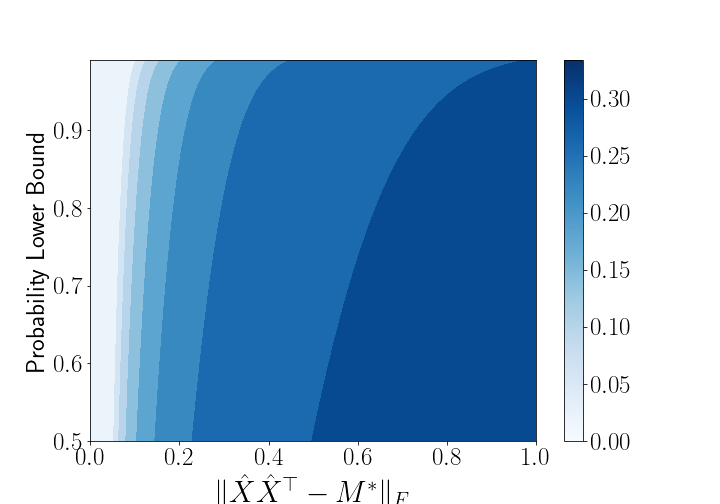}
    \caption{$\delta$ bound in Theorem~\ref{thm:global_local_min}.}
    \end{subfigure}\hspace{2em}
    \begin{subfigure}{6.5cm}
    	    \includegraphics[width=\linewidth]{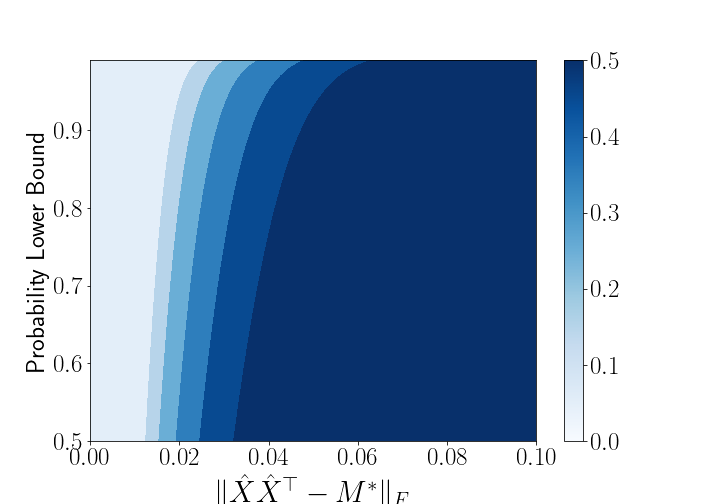}
    \caption{$\delta$ bound in Theorem~\ref{thm:local} with $\tau = 0.1$.}
    \end{subfigure}\vspace{2em}\\
    \begin{subfigure}{6.5cm}
    	    \includegraphics[width=\linewidth]{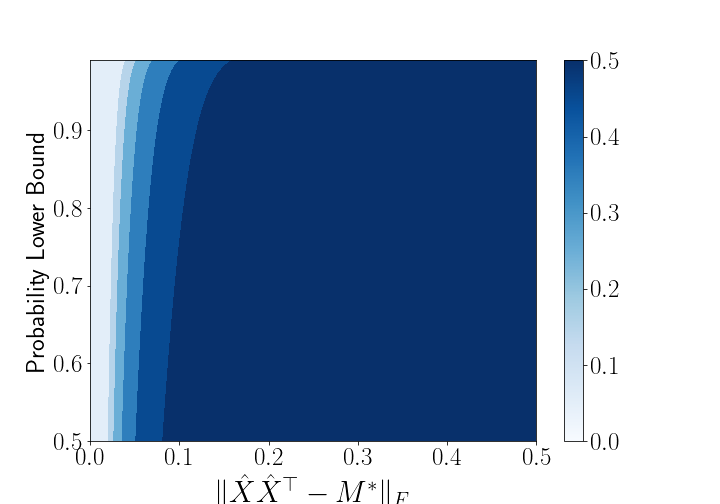}
    \caption{$\delta$ bound in Theorem~\ref{thm:local} with $\tau=0.5$.}
    \end{subfigure}\hspace{2em}
    \begin{subfigure}{6.5cm}
    	    \includegraphics[width=\linewidth]{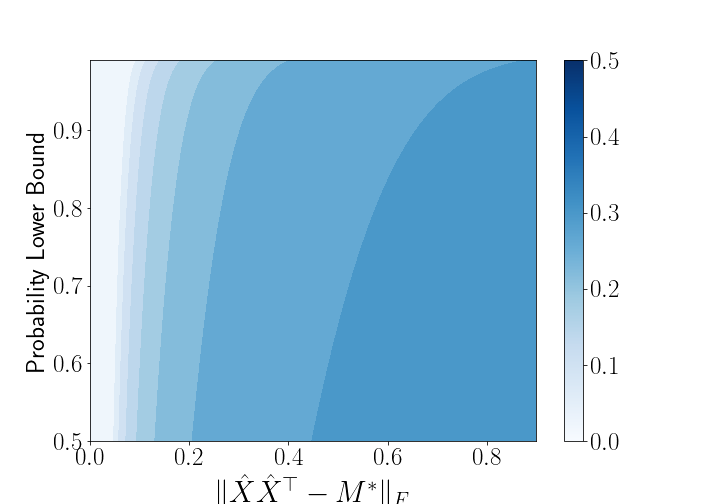}
    \caption{$\delta$ bound in Theorem~\ref{thm:local} with $\tau=0.9$.}
    \end{subfigure}
    \caption{Comparison of the maximum RIP constants $\delta$ allowed by Theorem~\ref{thm:global_local_min} and Theorem~\ref{thm:local} to guarantee a given bound on the distance $\|\hat X\hat X^\top - M^*\|_F$ for an arbitrary local minimizer $\hat X$ satisfying \eqref{eq:tau_range} with a given probability.}
    \label{fig:thm_compare}
\end{figure*}

\subsection{Linear Convergence with good initialization}
\label{sec:convergence_local}

To establish linear convergence for the noisy problem \eqref{eq:main_noisy_problem}, an additional assumption is required:
\begin{description}
	\item[Assumption 4.] There exists a constant $\rho$ such that the gradient of the function $f(\cdot,w)$ with respect to the first argument $M$ is $\rho-$restricted Lipschitz continuous, meaning that:
		\[
			\| \nabla_M f(M,w) - \nabla_M f(M',w) \|_F \leq \rho \|M - M'\|_F
		\]
		for all matrices $M, M' \in \mathbb{R}^{n \times n}$ with $\rk(M) \leq r$ and $\rk(M') \leq r$.
\end{description}
Assumption 4 is critical for the convergence of local search algorithms since otherwise we cannot choose a step size small enough to avoid the constant overshoot of the algorithm. For a standard matrix sensing problem, $\nabla_M f(M,w) = \mathbf{A}\mathbf{A}^\top \vecc(M)+\mathbf{A}^\top w$, hence satisfying Assumption 4 with $\rho = \sigma_{\max}(\mathbf{A}\mathbf{A}^\top)$.

We now present our main result in this section, which states that if the initialization is close enough to $M^w$, then the gradient descent algorithm will reach $M^w$ or a low-rank projection of $M^w$ at a linear rate. Here, $M^w$ is defined to be the unique global minimum of  \eqref{eq:unfactored_noisy_problem} without the rank constraint. Since \eqref{eq:unfactored_noisy_problem} is a strongly convex problem without the rank constraint, $M^w$ always exists and is unique. Given Theorems \ref{thm:global_local_min} and \ref{thm:local}, we can in turn guarantee that $M^w$ is close to $M^*$, showing that the gradient descent algorithm reaches a neighborhood of $M^*$ in a satisfactory rate.
\begin{theorem}
	\label{thm:linear_conv}
	The vanilla gradient descent method applied to \eqref{eq:main_noisy_problem} under Assumptions 1-4 converges to $\projr(M^w)$, the best rank-r approximation of $M^w$, linearly up to a difference $D_r$ if the initial point $X_0$ satisfies:
	\begin{equation}
		\label{eq:pl_init}
		\|X_0 X_0^\top - M^w \|_F < C_w^2(1-\delta-\zeta_2 \epsilon)  -  C_w \sqrt{\frac{1-\delta-\zeta_2 \epsilon}{1+\delta+\zeta_2 \epsilon}} D_r,
	\end{equation}
	meaning that vanilla gradient descent will reach a point $\tilde M$ linearly with $\|\tilde M - \projr(M^w)\|_F \geq D_r$, where
	\[
		D_r = \|M^w - \projr(M^w)\|_F, \ \ C_w = \sqrt{2(\sqrt 2 -1)\sigma_r(M^w)}.
	\]
	The linear convergence is also contingent on the fixed step size $\eta$ satisfying:
	\begin{equation}
			\eta \leq  \left(12 \rho r^{(1/2)}\left(C\sqrt{(1-(\delta+\zeta_2 \epsilon)^2}  + \|M^w\|_F \right)\right)^{-1},	
	\end{equation}
	for all $\epsilon < \frac{1-\delta}{\zeta_2}$ with probability at least $\mathbb P(\norm{w}_2 \leq \epsilon)$, where $C = 2(\sqrt 2 -1)$.

\end{theorem}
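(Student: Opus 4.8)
The plan is to show that the objective $h(X,w) = f(XX^\top,w)$ satisfies a local Polyak--Łojasiewicz (PL)-type inequality together with a local Lipschitz-smoothness bound on a neighborhood of $\projr(M^w)$, and then invoke the standard descent-lemma argument to conclude linear convergence. First I would set $M^w$ to be the unconstrained minimizer of the strongly convex problem \eqref{eq:unfactored_noisy_problem} (existence and uniqueness follow from Assumptions 1--3, since $f(\cdot,w)$ inherits $(1-\delta-\zeta_2\epsilon)$-restricted strong convexity and $(1+\delta+\zeta_2\epsilon)$-restricted smoothness from the RIP of $f(\cdot,0)$ and Assumption~3 on the event $\{\|w\|_2\le\epsilon\}$). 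The key geometric quantity is $\sigma_r(M^w)$, which controls how well-conditioned the Burer--Monteiro factorization is near the low-rank projection; the constant $C_w = \sqrt{2(\sqrt2-1)\sigma_r(M^w)}$ and the basin radius in \eqref{eq:pl_init} are exactly what is needed so that any $X$ with $\|XX^\top - M^w\|_F$ below that threshold has its $r$-th singular value bounded away from zero.

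The main steps, in order: (i) translate the RIP/Assumption-3 bounds into restricted strong convexity and restricted smoothness constants $\mu = 1-\delta-\zeta_2\epsilon$ and $L = 1+\delta+\zeta_2\epsilon$ for $f(\cdot,w)$ valid on rank-$\le 2r$ matrices, on the event $\{\|w\|_2\le\epsilon\}$; (ii) establish a local regularity (PL-type) inequality of the form $\|\nabla h(X,w)\|_F^2 \ge c\,\sigma_r(M^w)\big(h(X,w) - h^\star\big)$ for $X$ in the basin of \eqref{eq:pl_init}, where $h^\star$ corresponds to the value at $\projr(M^w)$ and the slack $D_r = \|M^w - \projr(M^w)\|_F$ accounts for the gap between the unconstrained optimum and its rank-$r$ truncation --- this is where the chain $\|XX^\top-M^w\|_F \to \sigma_r(XX^\top)$-lower-bound $\to$ $\|\nabla h\|_F$-lower-bound is invoked, using $\nabla h(X,w) = 2\nabla_M f(XX^\top,w)X$ and the injectivity of the map $U\mapsto \hat{\mathbf X}\,\vecc(U)$ on the relevant tangent space; (iii) use Assumption~4 together with a bound on $\|X\|_F$ (which follows from $\|XX^\top - M^w\|_F$ being controlled and $\|M^w\|_F$ being finite) to show the gradient of $h$ is Lipschitz with a constant of order $\rho\, r^{1/2}\big(C\sqrt{1-(\delta+\zeta_2\epsilon)^2} + \|M^w\|_F\big)$ on the basin, which dictates the step-size ceiling $\eta \le \big(12\rho r^{1/2}(C\sqrt{1-(\delta+\zeta_2\epsilon)^2}+\|M^w\|_F)\big)^{-1}$; (iv) run the standard argument: descent lemma gives $h(X_{k+1},w) \le h(X_k,w) - \tfrac{\eta}{2}\|\nabla h(X_k,w)\|_F^2$ for $\eta$ below the ceiling, combine with the PL inequality to get geometric decay of $h(X_k,w) - h^\star$ up to the $D_r$ slack, and verify the iterates never leave the basin (the slack term in \eqref{eq:pl_init}, namely $-C_w\sqrt{\mu/L}\,D_r$, is precisely the room reserved so that monotone decrease of the objective keeps $\|X_kX_k^\top - M^w\|_F$ under control); finally translate the function-value convergence back to $\|X_kX_k^\top - \projr(M^w)\|_F$ via strong convexity, obtaining linear convergence up to the stated $D_r$ difference.

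The hard part will be step (ii): proving the local regularity inequality with the \emph{right} constant. The difficulty is that $\nabla h(X,w) = 2\nabla_M f(XX^\top,w)X$ can be small either because $\nabla_M f(XX^\top,w)$ is small (which strong convexity of $f$ ties to $\|XX^\top - M^w\|_F$) or because $X$ is near rank-deficient --- so one must show that inside the basin \eqref{eq:pl_init} the $r$-th singular value $\sigma_r(X)$ stays bounded below by roughly $\sqrt{\sigma_r(M^w)}$ (up to the $\sqrt2-1$ factors), which is exactly why the radius involves $\sigma_r(M^w)$ and the curious constant $2(\sqrt2-1)$; this is a perturbation estimate relating $\sigma_r(XX^\top)$ to $\|XX^\top - M^w\|_F$ and $\sigma_r(M^w)$, combined with a careful accounting of the mismatch $D_r$ between $M^w$ and its best rank-$r$ approximation when $M^w$ itself need not be exactly rank $r$. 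A secondary technical nuisance is ensuring all invocations of RIP stay within the rank-$\le 2r$ regime (e.g. $XX^\top - M^w$ has rank $\le 2r$ only after replacing $M^w$ by $\projr(M^w)$, which is where $D_r$ re-enters), and that the Lipschitz bound in step (iii) is uniform over the basin rather than merely local at the optimum. Once these quantitative pieces are in place, step (iv) is the textbook PL-plus-descent-lemma computation and produces the stated linear rate.
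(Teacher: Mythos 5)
Your top-level architecture matches the paper: establish a local PL inequality around $\mathcal P_r(M^w)$, verify a smoothness bound for the step size, show the iterates stay in the basin by monotonicity of the objective, and then run the PL-plus-descent-lemma argument (the paper explicitly invokes Theorem~1 of \cite{karimi2016linear} at this point). Your reading of the roles of $C_w$, $D_r$, and the step-size ceiling is correct in spirit, and you correctly flag the PL inequality (your step (ii)) as the crux.

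However, the way you propose to establish step (ii) diverges substantially from what the paper actually does, and the gap is real. You sketch a direct perturbation argument: lower-bound $\sigma_r(XX^\top)$ via Weyl-type estimates inside the basin, then lower-bound $\|\nabla h(X,w)\|_F$ via injectivity of $U \mapsto \hat{\mathbf X}\,\mathrm{vec}(U)$. The paper's Lemma~\ref{lem:pl_ineq} instead proceeds by contradiction through an SDP/LMI certificate: it shows that if the PL inequality failed, the pair $(\delta, \mathbf H)$ (with $\mathbf H$ the averaged Hessian along a path from $XX^\top$ to $\mathcal P_r(M^w)$) would be feasible for a minimization problem \eqref{eq:pl_lmi} whose optimal value can be bounded via Lemmas~12 and 14 of \cite{bi2021local}, and this bound contradicts a ratio inequality \eqref{eq:pl_ineq_q1q2}. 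That certificate mechanism is precisely what produces the $\sqrt{1-(\delta+\zeta_2\epsilon)^2}$ factor in the basin radius and lets the result hold for \emph{arbitrary} $\delta<1$. A direct Weyl-and-injectivity argument of the type you propose is the classical regularity-condition route (\`a la \cite{tu2016low,wang2017unified}) and is not known to give that sharp quadratic-in-$\delta$ dependence; it typically forces a much smaller $\delta$ (the $1/7$ regime the paper is explicitly improving upon). Without the LMI certificate or an equivalent device you have not shown why the PL inequality holds with the stated radius when $\delta$ is close to $1$. Also note a smaller mismatch: in the paper the slack $D_r$ does not enter as "geometric decay up to a slack"; rather, the PL inequality in Lemma~\ref{lem:pl_ineq} is only asserted for $X$ with $\mathrm{dist}(X,\mathcal P_r(M^w)) \geq D_r$, i.e., the PL region is an annulus excluding a ball of radius $D_r$, and convergence simply stops once the iterate enters that ball.
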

The main challenge stemming from the introduction of noise is that the unconstrained global minimum of \eqref{eq:unfactored_noisy_problem} may not necessarily be of rank-$r$ (the rank of $M^*$) anymore. Therefore, since the Monteiro-Burer approach \eqref{eq:main_noisy_problem} can only search over matrices of rank at most $r$, we can only guarantee the convergence of any algorithm with respect to a rank-$r$ matrix, which for our purpose we chose to be $\projr(M^w)$. Thus, the radius of linear convergence depends on $D_r$, a constant quantifying how close $M^w$ is to a rank-$r$ matrix. In the special case that $M^w$ is of rank at most $r$, $D_r$ becomes 0 and our Theorem can be simplified. We summarize this special case via the following assumption:
\begin{description}
	\item[Assumption 5.] The objective function $f(\cdot,w)$ of \eqref{eq:unfactored_noisy_problem} has a first-order critical point $M^w$ for every $w$ such that it is symmetric, positive semidefinite, and $\rk(M^w) \leq r$.
\end{description}
Assumption 5 may not hold in general, but for specific problems, such as \eqref{eq:linear_noisy_problem}, this assumption is satisfied if the set $\{\mathbf{A}\vecc(N-M^*) \ | \ \rk(N)\leq r \}$ spans $\mathbb{R}^m$. This is highly likely since $m \ll n^2$. According to Proposition 1 in \cite{zhu2018global}, if Assumption 5 is met, $M^w$ is the global minimum of \eqref{eq:unfactored_noisy_problem}. With this assumption, we can now introduce a useful Corollary:
 \begin{corollary}
 	\label{thm:linear_conv_coro}
	The vanilla gradient descent method applied to \eqref{eq:main_noisy_problem} under Assumptions 1-5 converges to $M^w$ linearly if the initial point $X_0$ satisfies:
	\begin{equation}
		\label{eq:pl_init}
		\|X_0 X_0^\top - M^w \|_F < 2(\sqrt 2 -1) (1-\delta-\zeta_2 \epsilon) \sigma_r(M^w),	\end{equation}
	with fixed step size $\eta$ satisfying:
	\begin{equation}
			\eta \leq  \left(12 \rho r^{(1/2)} \left(C\sqrt{(1-(\delta+\zeta_2 \epsilon)^2}  + \|M^w\|_F \right)\right)^{-1},	
	\end{equation}
	for all $\epsilon < \frac{1-\delta}{\zeta_2}$ with probability at least $\mathbb P(\norm{w}_2 \leq \epsilon)$, where $C = 2(\sqrt 2 -1)$.
 \end{corollary}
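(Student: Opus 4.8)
The plan is to obtain Corollary~\ref{thm:linear_conv_coro} as a direct specialization of Theorem~\ref{thm:linear_conv} to the case in which the unconstrained critical point $M^w$ is itself a valid target for the Burer--Monteiro parametrization. First I would invoke Assumption~5: it guarantees that $M^w$ is symmetric, positive semidefinite, and of rank at most $r$, so there exists $X^w \in \mathbb{R}^{n\times r}$ with $X^w (X^w)^\top = M^w$, and moreover $\projr(M^w) = M^w$. Consequently the defect constant $D_r = \|M^w - \projr(M^w)\|_F$ appearing in Theorem~\ref{thm:linear_conv} is exactly $0$. I would also record, via Proposition~1 of \cite{zhu2018global}, that under Assumption~5 the point $M^w$ is in fact the global minimum of \eqref{eq:unfactored_noisy_problem} (with and without the rank constraint), which is what makes convergence to $M^w$ the desired conclusion.

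Next I would substitute $D_r = 0$ everywhere in the statement of Theorem~\ref{thm:linear_conv}. The convergence assertion — that vanilla gradient descent reaches $\projr(M^w)$ linearly "up to a difference $D_r$" — collapses to exact linear convergence to $\projr(M^w) = M^w$. The initialization condition \eqref{eq:pl_init} of Theorem~\ref{thm:linear_conv}, namely $\|X_0X_0^\top - M^w\|_F < C_w^2(1-\delta-\zeta_2\epsilon) - C_w\sqrt{(1-\delta-\zeta_2\epsilon)/(1+\delta+\zeta_2\epsilon)}\,D_r$, loses its second term and becomes $\|X_0X_0^\top - M^w\|_F < C_w^2(1-\delta-\zeta_2\epsilon)$; plugging in $C_w = \sqrt{2(\sqrt2-1)\sigma_r(M^w)}$ gives $C_w^2 = 2(\sqrt2-1)\sigma_r(M^w)$, which is precisely the radius stated in the corollary. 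The step-size bound (with the same constant $C = 2(\sqrt2-1)$ and the same factor $\sqrt{1-(\delta+\zeta_2\epsilon)^2}$) and the high-probability clause $\mathbb{P}(\|w\|_2 \le \epsilon)$ carry over verbatim, so no re-derivation of the descent-lemma constants is needed.

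Since this is a pure reduction, there is essentially no genuine obstacle; the one point requiring a little care is checking that the hypotheses of Theorem~\ref{thm:linear_conv} are all in force under the stronger set of hypotheses of the corollary. Assumptions~1--4 are assumed identically in both statements, and Assumption~5 only adds information, so there is no gap to patch. I would therefore present the proof as a short paragraph: cite \cite{zhu2018global} for the global optimality of $M^w$, note that Assumption~5 forces $D_r = 0$ and $\projr(M^w) = M^w$, and then simply read off the simplified initialization radius, step size, and convergence conclusion from Theorem~\ref{thm:linear_conv}.
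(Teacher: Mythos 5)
Your proposal is correct and is exactly the reduction the paper intends: Assumption~5 forces $M^w$ to be PSD with rank at most $r$, hence $\projr(M^w)=M^w$ and $D_r=0$, and substituting $D_r=0$ and $C_w^2 = 2(\sqrt2-1)\sigma_r(M^w)$ into Theorem~\ref{thm:linear_conv} yields the corollary verbatim. The paper does not give a separate proof for the corollary precisely because it follows by this immediate specialization, so your argument coincides with the intended one.
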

Prior to this theorem, it was possible to establish a linear convergence using the existing literature only when Assumption 5 holds and $\delta < 1/7$. Now, Theorem \ref{thm:linear_conv} allows for having an arbitrary $\delta$, and generalized the guarantee to cases where Assumption 5 does not hold. Theorem \ref{thm:linear_conv} further implies that even starting from an arbitrary initial point, the gradient descent algorithm has a linear convergence in the final phase, given that the step size is small enough and that the noise intensity is not high. This further implies that if a linear convergence is not observed, the user could decrease the step size until a linear convergence is established. This is confirmed empirically in Section~\ref{sec:num_conv}.

Theorem \ref{thm:linear_conv} is inspired by the observation that since we only search on a low-rank manifold, we may never really reach $M^w$ (even in the asymptotic regime), thus by constraining the search space away from $M^w$, linear convergence can be established. The full proof is deferred to Appendix \ref{sec:appendix_linear_cov}.

\subsection{Strict Saddle Property}
\label{sec:strict_saddle}
When $\delta < 1/3$, the noisy problem \eqref{eq:main_noisy_problem} exhibits the strict saddle property, meaning that all approximate second-order critical points are close to the global optimum of the optimization problem with high probability:
\begin{theorem}
	\label{thm:strict_saddle}
	Suppose that the objective function of \eqref{eq:main_noisy_problem} satisfies assumptions 1-3 with a $\delta$-RIP$_{2r,2r}$ constant of $\delta < 1/3$ in the noiseless case. Consider the ground truth solution $M^*$ which is of rank $r$. For a given constant $\alpha > 0$, there exists a finite constant $\xi > 0$ such that at least one of the three following conditions holds for any $X \in \mathbb{R}^{n \times r}$:
	\begin{align*}
		&\dist(X,M^*) \leq \alpha, \ \|\nabla_X h(X,w)\|_F \geq \xi, \\
		 &\lambda_{\min}(\nabla^2_X h(X,w)) \leq -2\xi,
	\end{align*}
	with probability at least $\mathbb P(\norm{w}_2
 \leq \frac{1/3-\delta}{\zeta_2+2\zeta_\alpha/3})$, where $\zeta_\alpha \coloneqq \zeta_1/(\sqrt{2(\sqrt 2-1)} (\sigma_r(M^*))^{1/2} \alpha)$. 
\end{theorem}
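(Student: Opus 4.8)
The plan is to argue by contrapositive: fix $\alpha>0$ and suppose that for some $X$ none of the three alternatives holds, i.e. $\dist(X,M^*)>\alpha$, $\|\nabla_X h(X,w)\|_F<\xi$, and $\lambda_{\min}(\nabla^2_X h(X,w))>-2\xi$; we want to reach a contradiction once $\xi$ is chosen small enough (and $\|w\|_2$ small enough). The first step is to reduce the approximate-critical-point hypothesis to a genuine-critical-point statement with a controlled perturbation: since $\|\nabla_X h(X,w)\|_F<\xi$, the point $X$ is an $\xi$-approximate first-order point, and since $\lambda_{\min}(\nabla^2_X h(X,w))\geq -2\xi$, it is a $2\xi$-approximate second-order point. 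I would then feed this into the same machinery that proves Theorem~\ref{thm:global_local_min}: the proof of that theorem produces, for an exact local minimizer $\hat X$, a lower bound on $\sigma_r(\hat X)$ in terms of $\lambda_{\min}$ of the gradient of $f$ and $\delta$, and a matching upper bound on $\sigma_r(\hat X)$ in terms of $\|\hat X\hat X^\top-M^*\|_F$; combining them forces $\|\hat X\hat X^\top-M^*\|_F$ to be small. I would re-run this argument carrying the extra additive slack $\xi$ through the first- and second-order conditions, so the conclusion becomes $\|X X^\top - M^*\|_F \leq g(\delta,\zeta_1,\zeta_2,\|w\|_2) + c(\alpha)\,\xi$ for some explicit continuous $g$ and $c(\alpha)$, where $g$ is exactly the right-hand side of \eqref{eq:global_local_min_range} evaluated at $\epsilon=\|w\|_2$.

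The second step is to convert the bound on $\|XX^\top-M^*\|_F$ into a bound on $\dist(X,M^*)$, which is what the theorem statement controls. Here one uses a standard inequality relating the factor-space distance to the Frobenius distance of the Gram matrices: for rank-$r$ PSD matrices, $\dist(X,M^*)^2 \leq \frac{1}{2(\sqrt2-1)\sigma_r(M^*)}\|XX^\top-M^*\|_F^2$, which explains the appearance of $\sqrt{2(\sqrt2-1)}\,(\sigma_r(M^*))^{1/2}$ in the definition of $\zeta_\alpha$. Thus $\dist(X,M^*)>\alpha$ together with this inequality gives a lower bound $\|XX^\top-M^*\|_F > \sqrt{2(\sqrt2-1)\sigma_r(M^*)}\,\alpha$. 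Chaining the upper bound from step one against this lower bound yields an inequality that, for $\xi\to 0$, reduces to $\sqrt{2(\sqrt2-1)\sigma_r(M^*)}\,\alpha < g(\delta,\zeta_1,\zeta_2,\|w\|_2)$; one then checks that the stated probability event $\|w\|_2 \leq \frac{1/3-\delta}{\zeta_2 + 2\zeta_\alpha/3}$ is precisely the condition under which $g(\delta,\zeta_1,\zeta_2,\|w\|_2) \leq \sqrt{2(\sqrt2-1)\sigma_r(M^*)}\,\alpha$, i.e. the definition of $\zeta_\alpha$ is reverse-engineered so that the floor $\|w\|_2 < (1/3-\delta)/(\zeta_2+2\zeta_\alpha/3)$ is exactly the boundary of feasibility. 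Having obtained a strict inequality that fails, we get a contradiction for $\xi=0$; by continuity of all the bounds in $\xi$, there is a finite $\xi>0$ for which the contradiction persists, and any such $\xi$ works in the theorem. A compactness remark may be needed to make the choice of $\xi$ uniform over all $X$ with $\dist(X,M^*)>\alpha$ and bounded — or, more simply, the bounds are explicit in $\xi$, so one just solves for the largest admissible $\xi$.

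The main obstacle I anticipate is the careful bookkeeping in step one: the proof of Theorem~\ref{thm:global_local_min} crucially uses the exact first-order condition $\nabla_X h(\hat X,w)=0$ (to express the gradient of $f$ at $\hat X\hat X^\top$ in terms of $\hat X$, its column/null space, and hence to get the $\sigma_r$ lower bound) and the exact second-order condition $\nabla^2_X h(\hat X,w)\succeq 0$ tested along a particular rank-$r$ direction (to get the $\sigma_r$ upper bound). Re-deriving both with $\|\nabla_X h\|_F \leq \xi$ and $\nabla^2_X h \succeq -2\xi I$ requires tracking how the $\xi$-slack propagates through the projection onto the range of $\hat X$ and through the chosen test direction, whose Frobenius norm must be bounded in terms of $\|X\|_F$ (hence in terms of $\alpha$ and $\sigma_r(M^*)$, explaining why $c(\alpha)$ and the probability threshold depend on $\alpha$). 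Once that propagation is done cleanly, the rest is algebra: solving the resulting quadratic/linear inequality in $\|XX^\top - M^*\|_F$ and matching constants to the statement. The full argument is deferred to the appendix.
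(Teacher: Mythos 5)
Your overall plan is reasonable in spirit but diverges from the paper's route, and it contains a genuine gap. The paper does not re-derive the strict saddle property by carrying slack through the proof of Theorem~\ref{thm:global_local_min}; instead it adapts the proof of Theorem~7 in \cite{zhang2021general}, modifying three auxiliary lemmas (Lemmas~6--8 of that paper) so that they tolerate the noise-induced perturbations $\zeta_1 q$ and $\zeta_2 q$. Your intuition about $\zeta_\alpha$ is correct: it does arise by converting the factor-space radius $\alpha$ into a Gram-matrix radius $\sqrt{2(\sqrt 2-1)\sigma_r(M^*)}\,\alpha$ via Lemma~\ref{lem:dis_conversion}, and the stated probability floor $\|w\|_2 \le (1/3-\delta)/(\zeta_2 + 2\zeta_\alpha/3)$ is exactly the positivity threshold for the resulting margin constant. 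That algebra does check out.

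The gap is that you never control $\|X\|$, and the theorem is asserted for \emph{all} $X\in\mathbb{R}^{n\times r}$, not just a bounded set. Your step one claims a bound of the form $\|XX^\top - M^*\|_F \le g + c(\alpha)\,\xi$, but the slack constant $c$ inherited from the approximate first-order condition $\|\nabla_X h(X,w)\|_F < \xi$ (which unpacks to $\|\nabla_M f(XX^\top,w)X\|_F < \xi/2$) and from the approximate second-order condition cannot be taken uniform in $X$: converting the $X$-level gradient bound into an $M$-level statement requires dividing by quantities like $\sigma_r(X)$, and the test directions $\hat X U^\top + U\hat X^\top$ carry a factor $\|X\|$. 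So as $\|X\|\to\infty$ or $\sigma_r(X)\to 0$ the slack can blow up, and the "largest admissible $\xi$" you propose to solve for may be zero. The phrase "and bounded" that you insert parenthetically in your compactness remark is doing all of the load-bearing work and is not justified by anything you wrote. The paper closes precisely this hole with a dedicated lemma (Lemma~\ref{lem:new_lemma6} in the appendix), which shows that whenever $\|XX^\top\|_F$ exceeds a computable threshold depending on $\|M^*\|_F$, $\delta$, $\zeta_1$, $\zeta_2$, $q$, and $\lambda$, the gradient $\|\nabla_X h(X,w)\|_F$ already exceeds $\lambda$; this truncates the region where any contradiction argument is needed to a fixed ball. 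Your proposal needs an analogous a priori truncation step before the slack-propagation strategy can produce a positive uniform $\xi$.
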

The significance of the establishment of the strict saddle property is that one can find an approximate local minimum in polynomial time. The perturbed gradient descent algorithm presented in \cite{JGNK2017} serves as one of the algorithms achieving this goal. Coupled with Theorem~\ref{thm:linear_conv}, it means that we could reach $M^w$ with an arbitrary accuracy in polynomial time via a random initialization, which is also known to be close to $M^*$ according to Theorems \ref{thm:global_local_min} and \ref{thm:local}.

The proof of this theorem is similar to that of Theorem 7 of \cite{zhang2021general}, and we highlight the key differences in Appendix \ref{sec:appendix_strict_saddle} to illustrate how Theorem \ref{thm:strict_saddle} can be proved.

\begin{figure*}[t]
    \centering
    \begin{subfigure}{7cm}
    	    \includegraphics[width=\linewidth]{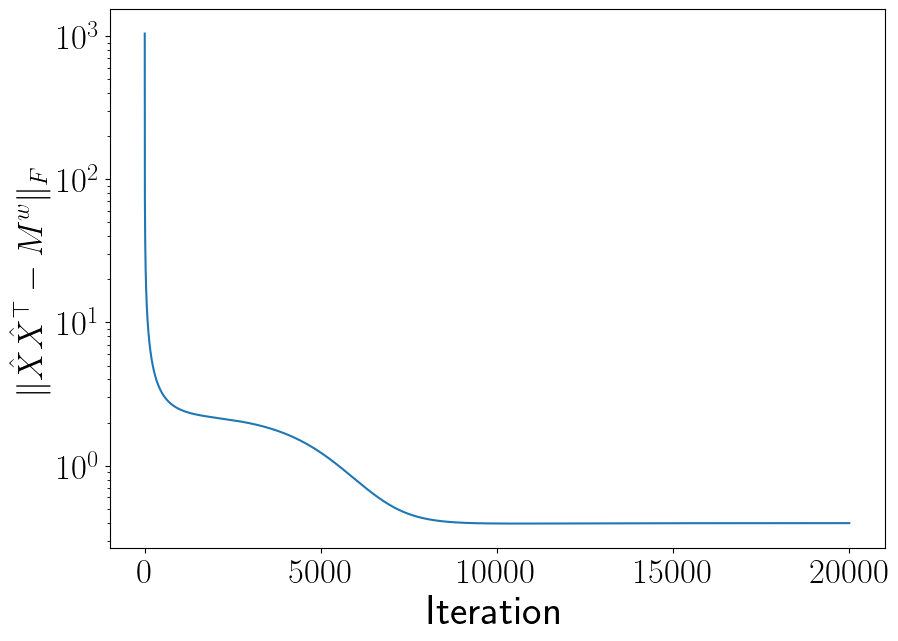}
    \caption{Convergence rate when step size is 0.001.}
    \end{subfigure} \hspace{1em}
    \begin{subfigure}{7cm}
    	    \includegraphics[width=\linewidth]{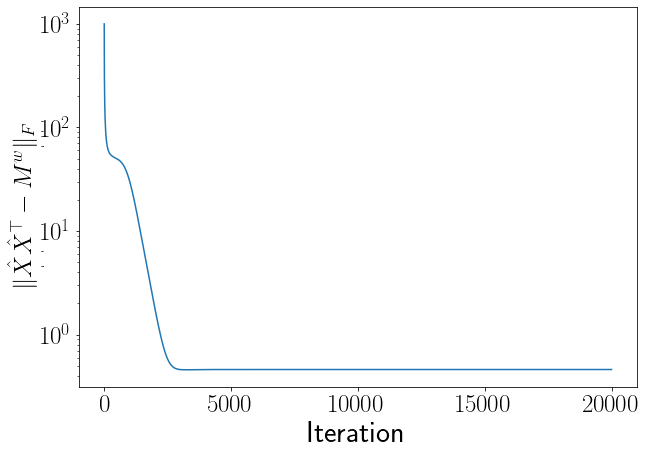}
    \caption{Convergence rate when step size is 0.0002.}
    \end{subfigure}
    \caption{The distance to $M^w$ versus iterations for gradient descent with random initialization.}
    \label{fig:convergence}
\end{figure*}

\section{NUMERICAL ILLUSTRATION}
In this section, we provide a concrete example to the results derived above\footnote{Code used to produce the results in this section can be found here: https://github.com/anonpapersbm/Noisy-Low-rank-Matrix-Optimization}. We empirically study the proximity of an arbitrary local minimizer $\hat X$ of \eqref{eq:main_noisy_problem} to its ground truth solution in terms of $\|\hat X \hat X^\top -M^*\|_F$, and analyze the effect of the step size on the convergence rate.

Assume that $w \in \mathbb{R}^m$ is a $0.05/\sqrt{m}$-sub-Gaussian vector. According to Lemma 1 in \cite{jin2019short}, this choice of $w$ satisfies:
\[
	1- 2\mathrm e^{-\frac{\epsilon^2}{16m \sigma^2}} \leq \mathbb{P}(\|w\|_2 \leq \epsilon).
\]
with $\sigma = 0.05$. We refer to the RHS of the above equation as the probability lower-bound since it says that the event $\|w\|_2 \leq \epsilon$ will happen with probability at least that number.

\subsection{Quality of Local Minima}
We consider the problem of 1-bit Matrix Completion, which is a low-rank matrix optimization problem that naturally arises in recommendation systems with binary inputs \citep{davenport20141,ghadermarzy2018learning}. 

The objective of this 1-bit Matrix Completion problem is:
\begin{equation}
	\label{eq:1bit}
	f(M,w) = -\sum_{i=1}^n \sum_{j=1}^n ((y_{ij}+w_{ij})M_{ij} - \log(1+\exp(M_{ij})))
\end{equation}
where $M_{ij}$ is the $(i,j)^{\text{th}}$ component of $M$ and $y_{ij} \in [0,1]$ is a percentage-wise observation of $M_{ij}$. Since $y_{ij}$ are empirical observations, they could very much be subject to random corruptions, which we explicitly represent by $w \in \mathbb{R}^{m}$, with $m=n^2$. It is straightforward to verify that \eqref{eq:1bit} satisfies the assumptions outlined in Section \ref{sec:assump}, with $\zeta_1 = 1$ and $\zeta_2 =0$.

The work \cite{bi2020global} shows that for \eqref{eq:1bit}, the function $\gamma f(M,0)$ exhibits the $\delta$-RIP$_{2r,2r}$ property for some constant $\gamma$ over the neighborhood $\|\hat X \hat X^\top - M^*\|_F \leq R$ for a small $R$. We choose $M^*$ such that $\lambda_r(M^*) = R$. Therefore, we can use the framework proposed in this paper to analyze the quality of the local minima of \eqref{eq:1bit} under random perturbation.

In Figure~\ref{fig:thm_compare}, we numerically demonstrate and compare the bounds given in Theorem~\ref{thm:global_local_min} and Theorem~\ref{thm:local}, for the parameters $n=40$, and $r=5$. We assume $w$ comes from the sub-Gaussian distribution described above with $\sigma=0.05$. The x-axis shows the maximum distance between an arbitrary local minimum $\hat X$ and the ground truth, and unit for the x-axis is $\lambda_r(M^*)$. The y-axis delineates the probability lower bound, which describes a lower-bound on the probability that the event will happen. The contour plot itself shows the maximum $\delta$ that is necessary to guarantee $\hat X$ to be in the range of $\|\hat X \hat X^\top - M^*\|_F \leq \xi$ with $\xi$ specified on the x-axis, with probability greater than the value specified on the y-axis. Figure~\ref{fig:thm_compare} shows that as $\tau$ becomes smaller, for the same set of $(x,y)$ values, the necessary value of $\delta$ becomes larger. This means that if the prior information on $\hat X$ is strong, meaning that it is known to lie within a neighborhood of the ground truth, then the local minima are tightly centered around the correct solution with a high probability. Moreover, the global bound is generally looser than that of the local version when $\tau$ is small, because it only applies to cases when $\delta < 1/3$, but when $\tau$ is large, the global bound could be better even with the same $\delta$, as evident when comparing subfigures (a) and (d) in Figure~\ref{fig:thm_compare}.

Readers can refer to Appendix \ref{sec:figures} for more plots regarding the interplay of $\zeta_1,\zeta_2$ and $\sigma$ values in Theorem \ref{thm:global_local_min}. These values may represent a wide range of different objectives(each objective is characterized by its $\zeta_1$ and $\zeta_2$ values) and different noise patterns (characterized by $\sigma$ value since many known distributions are sub-Gaussian).

\subsection{Convergence Rate}
\label{sec:num_conv}

In this section, we demonstrate the convergence rate of the vanilla gradient descent algorithm applied to an instance of \eqref{eq:linear_noisy_problem} satisfying Assumptions 1-5 with $n=40$, $m=190$, $r=5$. The matrix $\mathbf A$ used here makes the objective function satisfy $0.42$-RIP$_{2r,2r}$. We also assume that $\lambda_1(M^*)=1.5$ and $\lambda_r(M^*)=1$. Note that \eqref{eq:linear_noisy_problem} meets our assumptions with $\rho = \|\mathbf A \|^2_2, \zeta_1 = \|\mathbf A\|_2$, and $ \zeta_2=0$. We aim to show how the step size affects the convergence rate, and corroborate the theoretical results in Theorem \ref{thm:linear_conv}. Note since Assumption 5 is satisfied, the algorithm will converge to $M^w$ directly.

In Figure~\ref{fig:convergence}, we choose two different step sizes, namely 0.001 and 0.0002, and start from random initialization. It can be observed that in the case of the larger step size, there is a region of plateauing in which the gradient descent algorithm makes little progress, while the smaller step size exhibits a linear convergence around iterations 500-2500 even after the initial phase of a fast descent. This result is in accordance with Corollary~\ref{thm:linear_conv_coro}, which states that for a small enough step size, the gradient descent algorithm will achieve a linear convergence in a neighborhood of the global minimum.

\section{CONCLUSION}
In this work, we proposed a unified, yet general framework to analyze the global and local optimization landscapes of a class of noisy low-rank matrix optimization problems. We showed that regardless of the distribution from which the random noise is sampled, if the noiseless objective satisfies RIP, then there are mathematical guarantees on the locations of local minima and the convergence rate. This means that even for general objectives, geometric uniformity can compensate for random corruption. This paper significantly extends the existing results in the literature on this general problem, and offers new techniques and insights that can be used to study other noisy low-rank optimization problems.

\section{ACKNOWLEDGEMENTS}
This work was supported by grants from ONR and NSF.


\bibliography{references.bib}

\begin{thebibliography}{}

\bibitem[Anderson and Sojoudi, 2019]{anderson2019global}
Anderson, B.~G. and Sojoudi, S. (2019).
\newblock Global optimality guarantees for nonconvex unsupervised video
  segmentation.
\newblock In {\em 2019 57th Annual Allerton Conference on Communication,
  Control, and Computing (Allerton)}, pages 965--972. IEEE.

\bibitem[Bhojanapalli et~al., 2016]{bhojanapalli2016global}
Bhojanapalli, S., Neyshabur, B., and Srebro, N. (2016).
\newblock Glob\-al optimality of local search for low rank matrix recovery.
\newblock In {\em Advances in Neural Information Processing Systems},
  volume~29.

\bibitem[Bi and Lavaei, 2020]{bi2020global}
Bi, Y. and Lavaei, J. (2020).
\newblock Global and local analyses of nonlinear low-rank matrix recovery
  problems.
\newblock arXiv:2010.04349.

\bibitem[Bi et~al., 2021]{bi2021local}
Bi, Y., Zhang, H., and Lavaei, J. (2021).
\newblock Local and global linear convergence of general low-rank matrix
  recovery problems.
\newblock {\em arXiv preprint arXiv:2104.13348}.

\bibitem[Boumal, 2016]{boumal2016nonconvex}
Boumal, N. (2016).
\newblock Nonconvex phase synchronization.
\newblock {\em SIAM Journal on Optimization}, 26(4):2355--2377.

\bibitem[Burer and Monteiro, 2003]{burer2003nonlinear}
Burer, S. and Monteiro, R.~D. (2003).
\newblock A nonlinear programming algorithm for solving semidefinite programs
  via low-rank factorization.
\newblock {\em Mathematical Programming}, 95(2):329--357.

\bibitem[Cand{\`e}s et~al., 2011]{candes2011robust}
Cand{\`e}s, E.~J., Li, X., Ma, Y., and Wright, J. (2011).
\newblock Robust principal component analysis?
\newblock {\em Journal of the ACM (JACM)}, 58(3):1--37.

\bibitem[Candes and Plan, 2011]{candes2011tight}
Candes, E.~J. and Plan, Y. (2011).
\newblock Tight oracle inequalities for low-rank matrix recovery from a minimal
  number of noisy random measurements.

\bibitem[Cand{\`e}s and Recht, 2009]{candes2009exact}
Cand{\`e}s, E.~J. and Recht, B. (2009).
\newblock Exact matrix completion via convex optimization.
\newblock {\em Foundations of Computational Mathematics}, 9(6):717--772.

\bibitem[Cand{\`e}s and Tao, 2010]{candes2010power}
Cand{\`e}s, E.~J. and Tao, T. (2010).
\newblock The power of convex relaxation: Near-optimal matrix completion.
\newblock {\em IEEE Transactions on Information Theory}, 56(5):2053--2080.

\bibitem[Davenport et~al., 2014]{davenport20141}
Davenport, M.~A., Plan, Y., Van Den~Berg, E., and Wootters, M. (2014).
\newblock 1-bit matrix completion.
\newblock {\em Information and Inference: A Journal of the IMA}, 3(3):189--223.

\bibitem[Fattahi and Sojoudi, 2020]{fattahi2020exact}
Fattahi, S. and Sojoudi, S. (2020).
\newblock Exact guarantees on the absence of spurious local minima for
  non-negative rank-1 robust principal component analysis.
\newblock {\em Journal of Machine Learning Research}, 21:1--51.

\bibitem[Ge et~al., 2017]{ge2017no}
Ge, R., Jin, C., and Zheng, Y. (2017).
\newblock No spurious local minima in nonconvex low rank problems: A unified
  geometric analysis.
\newblock In {\em Proceedings of the 34th International Conference on Machine
  Learning}, volume~70 of {\em Proceedings of Machine Learning Research}, pages
  1233--1242.

\bibitem[Ghadermarzy et~al., 2018]{ghadermarzy2018learning}
Ghadermarzy, N., Plan, Y., and Yilmaz, O. (2018).
\newblock Learning tensors from partial binary measurements.
\newblock {\em IEEE Transactions on Signal Processing}, 67(1):29--40.

\bibitem[Ha et~al., 2020]{ha2020equivalence}
Ha, W., Liu, H., and Barber, R.~F. (2020).
\newblock An equivalence between critical points for rank constraints versus
  low-rank factorizations.
\newblock {\em SIAM Journal on Optimization}, 30(4):2927--2955.

\bibitem[Jin et~al., 2017]{JGNK2017}
Jin, C., Ge, R., Netrapalli, P., Kakade, S.~M., and Jordan, M.~I. (2017).
\newblock How to escape saddle points efficiently.
\newblock In {\em Proceedings of the 34th International Conference on Machine
  Learning}, volume~70 of {\em Proceedings of Machine Learning Research}, pages
  1724--1732.

\bibitem[Jin et~al., 2019a]{jin2019short}
Jin, C., Netrapalli, P., Ge, R., Kakade, S.~M., and Jordan, M.~I. (2019a).
\newblock A short note on concentration inequalities for random vectors with
  {subGaussian} norm.
\newblock arXiv:1902.03736.

\bibitem[Jin et~al., 2019b]{jin2019towards}
Jin, M., Molybog, I., Mohammadi-Ghazi, R., and Lavaei, J. (2019b).
\newblock Towards robust and scalable power system state estimation.
\newblock In {\em 2019 IEEE 58th Conference on Decision and Control (CDC)},
  pages 3245--3252. IEEE.

\bibitem[Karimi et~al., 2016]{karimi2016linear}
Karimi, H., Nutini, J., and Schmidt, M. (2016).
\newblock Linear convergence of gradient and proximal-gradient methods under
  the polyak-{\l}ojasiewicz condition.
\newblock In {\em Joint European Conference on Machine Learning and Knowledge
  Discovery in Databases}, pages 795--811. Springer.

\bibitem[Koren et~al., 2009]{koren2009matrix}
Koren, Y., Bell, R., and Volinsky, C. (2009).
\newblock Matrix factorization techniques for recommender systems.
\newblock {\em Computer}, 42(8):30--37.

\bibitem[Li et~al., 2019]{li2019non}
Li, Q., Zhu, Z., and Tang, G. (2019).
\newblock The non-convex geometry of low-rank matrix optimization.
\newblock {\em Information and Inference: A Journal of the IMA}, 8(1):51--96.

\bibitem[Ma et~al., 2022]{ma2021sharp}
Ma, Z., Bi, Y., Lavaei, J., and Sojoudi, S. (2022).
\newblock Sharp restricted isometry property bounds for low-rank matrix
  recovery problems with corrupted measurements.
\newblock {\em AAAI-22}.

\bibitem[Recht et~al., 2010]{recht2010guaranteed}
Recht, B., Fazel, M., and Parrilo, P.~A. (2010).
\newblock Guaranteed minimum-rank solutions of linear matrix equations via
  nuclear norm minimization.
\newblock {\em SIAM Review}, 52(3):471--501.

\bibitem[Shechtman et~al., 2015]{shechtman2015phase}
Shechtman, Y., Eldar, Y.~C., Cohen, O., Chapman, H.~N., Miao, J., and Segev, M.
  (2015).
\newblock Phase retrieval with application to optical imaging: A contemporary
  overview.
\newblock {\em IEEE Signal Processing Magazine}, 32(3):87--109.

\bibitem[Singer, 2011]{singer2011angular}
Singer, A. (2011).
\newblock Angular synchronization by eigenvectors and semidefinite programming.
\newblock {\em Applied and Computational Harmonic Analysis}, 30(1):20--36.

\bibitem[Tu et~al., 2016]{tu2016low}
Tu, S., Boczar, R., Simchowitz, M., Soltanolkotabi, M., and Recht, B. (2016).
\newblock Low-rank solutions of linear matrix equations via procrustes flow.
\newblock In {\em International Conference on Machine Learning}, pages
  964--973. PMLR.

\bibitem[Wang et~al., 2017]{wang2017unified}
Wang, L., Zhang, X., and Gu, Q. (2017).
\newblock A unified computational and statistical framework for nonconvex
  low-rank matrix estimation.
\newblock In {\em Artificial Intelligence and Statistics}, pages 981--990.
  PMLR.

\bibitem[Zhang and Zhang, 2020]{zhang2020many}
Zhang, G. and Zhang, R.~Y. (2020).
\newblock How many samples is a good initial point worth in low-rank matrix
  recovery?
\newblock In {\em Advances in Neural Information Processing Systems},
  volume~33, pages 12583--12592.

\bibitem[Zhang et~al., 2021]{zhang2021general}
Zhang, H., Bi, Y., and Lavaei, J. (2021).
\newblock General low-rank matrix optimization: Geometric analysis and sharper
  bounds.
\newblock {\em Advances in Neural Information Processing Systems}, 34.

\bibitem[Zhang, 2021]{Zhang2021-p}
Zhang, R.~Y. (2021).
\newblock Sharp global guarantees for nonconvex low-rank matrix recovery in the
  overparameterized regime.
\newblock arXiv:2104.10790.

\bibitem[Zhang et~al., 2019]{zhang2019sharp}
Zhang, R.~Y., Sojoudi, S., and Lavaei, J. (2019).
\newblock Sharp restricted isometry bounds for the inexistence of spurious
  local minima in nonconvex matrix recovery.
\newblock {\em Journal of Machine Learning Research}, 20(114):1--34.

\bibitem[Zhang et~al., 2018]{zhang2018primal}
Zhang, X., Wang, L., Yu, Y., and Gu, Q. (2018).
\newblock A primal-dual analysis of global optimality in nonconvex low-rank
  matrix recovery.
\newblock In {\em International conference on machine learning}, pages
  5862--5871. PMLR.

\bibitem[Zhang et~al., 2017]{zhang2017conic}
Zhang, Y., Madani, R., and Lavaei, J. (2017).
\newblock Conic relaxations for power system state estimation with line
  measurements.
\newblock {\em IEEE Transactions on Control of Network Systems},
  5(3):1193--1205.

\bibitem[Zhu et~al., 2018]{zhu2018global}
Zhu, Z., Li, Q., Tang, G., and Wakin, M.~B. (2018).
\newblock Global optimality in low-rank matrix optimization.
\newblock {\em IEEE Transactions on Signal Processing}, 66(13):3614--3628.

\end{thebibliography}

\appendix
\onecolumn
\newpage

\section{ASYMMETRIC CASE}\label{sec:appendix_asymmetric}
Although this paper focuses on the symmetric case, meaning that $M$ is assumed to be symmetric and positive semidefinite, our analysis techniques are all valid for the case where $M \in \RR^{n \times m}$ for arbitrary numbers $m$ and $n$. To explain this generalization as per \cite{tu2016low}, we first need to deal with the redundancy of global optima induced by the asymmetry. This can be achieved by solving the following optimization problem with a regularization term instead of \eqref{eq:main_noisy_problem}:
\begin{equation}
	\label{eq:asymm_prob}
	\min_{U \in \mathbb{R}^{n \times r}, \ V \in \RR^{m \times r} }f(UV^\top,w)+\frac{\phi}{4} \|U^\top U - V^\top V\|_F^2.
\end{equation}
where $\phi$ is an arbitrary penalization constant. As per \cite{bi2021local}, solving \eqref{eq:asymm_prob} is equivalent to:
\begin{equation}
	\label{eq:asymm_prob_converted}
	\min_{X \in \RR^{(n+m) \times r}} f_a(XX^\top,w)
\end{equation}
where $X = \begin{bmatrix}
	U^\top & V^\top
\end{bmatrix}^\top \in \RR^{(n+m) \times r}$ and the function $f_a(\cdot,w): \RR^{(n+m) \times (n+m)} \mapsto \RR$ satisfies:
\begin{equation*}
	\begin{aligned}
		f_a(\begin{bmatrix}
		P_{11} & P_{12} \\ P_{21} & P_{22}
	\end{bmatrix},w) = \frac{f(P_{11},w)+ f(P_{22},w)}{2} + \\
	\frac{\phi}{4}(\|P_{11}\|^2_F+\|P_{22}\|^2_F-\|P_{12}\|^2_F-\|P_{21}\|^2_F)
	\end{aligned}
\end{equation*}
where $P_{11} \in \mathbb{R}^{n \times n}, \quad P_{12} \in \mathbb{R}^{n \times m}, \quad P_{21} \in \mathbb{R}^{m \times n}, \quad P_{22} \in \mathbb{R}^{m \times m}$ are just partitioned blocks of $XX^\top$ of appropriate dimensions corresponding to $UU^\top, UV^\top, VU^\top, VV^\top$, respectively. The equivalence between the asymmetric problem and its symmetric counterpart \eqref{eq:asymm_prob_converted} implies that the results of this paper obtained for \eqref{eq:main_noisy_problem} can be restated for the original asymmetric problem.

\section{OPTIMALITY CONDITIONS}
Before diving into the results, we establish some optimality conditions for local minima and global minima, which we will use extensively in the ensuing sections.

First, we make the following assumption without loss of generality:
\begin{assumption}
	Assume that $\nabla_M f(M,w)$ is symmetric for every $M \in \RR^{n \times n}$,
\end{assumption}
This assumption always holds since otherwise we could simply optimize for $(f(M,w)+f(M^\top,w))/2$ instead.

The parameter $q$ is used to represent $\|w\|_2$ in the following proofs for the sake of notational simplicity.

We also make the following standard assumption:
\begin{assumption}
	\label{assump:critical}
	The objective function $f(\cdot,0)$ of \eqref{eq:unfactored_noisy_problem} has a first-order critical point $M^*$ such that it is symmetric, positive semi-definite, and $\rk(M^*) \leq r$.
\end{assumption} 
This assumptions states that the objective in \eqref{eq:unfactored_noisy_problem} can indeed recover the ground truth low rank matrix $M^*$ when solved to global optimality. Otherwise solving for \eqref{eq:unfactored_noisy_problem} under low rank constraint will be meaningless.

Note that 
\begin{equation}
	\label{eq:gt_nabla_zero}
	\nabla_M f(M^*,0) = 0
\end{equation}
is a consequence of Assumption \ref{assump:critical} due to Proposition 1 in \cite{zhu2018global}. This proposition also implies that $M^*$ is the unique global minimum of \eqref{eq:unfactored_noisy_problem}.

Next, we derive necessary and sufficient conditions for first- and second-order critical points of \eqref{eq:main_noisy_problem}:
\begin{lemma}
	\label{lem:sosp}
	A matrix $\hat X \in \mathbb{R}^{n \times r}$  is a second-order critical point of problem \eqref{eq:main_noisy_problem} if and only if
	\begin{equation}
		\label{eq:foc}
		\nabla_M f(\hat X \hat X^\top,w) \hat X = 0
	\end{equation}
	and
	\begin{equation}
		\label{eq:soc}
	\begin{aligned}
		&2 \langle \nabla_M f(\hat X \hat X^\top,w), UU^\top \rangle + [\nabla^2 f(\hat X \hat X^\top,w)](\hat X U^\top + U \hat X^\top,\hat X U^\top + U \hat X^\top) \geq 0
	\end{aligned}
	\end{equation}
	for all $U \in \mathbb{R}^{n \times r}$. Furthermore $\hat X \in \mathbb{R}^{n \times r}$ is a first-order critical point if and only if it satisfies \eqref{eq:foc}.
\end{lemma}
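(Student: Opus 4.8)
The plan is to derive the second-order optimality conditions for problem \eqref{eq:main_noisy_problem} directly from the chain rule applied to $h(X,w) = f(XX^\top,w)$, treating $w$ as a fixed parameter throughout. First I would compute the Euclidean gradient of $h$ with respect to $X$: using the identity $\langle \nabla_M f(XX^\top,w), \de(XX^\top) \rangle = \langle \nabla_M f(XX^\top,w), (\de X) X^\top + X (\de X)^\top \rangle$ and the symmetry of $\nabla_M f$ (guaranteed by the standing assumption), this collapses to $\nabla_X h(X,w) = 2 \nabla_M f(XX^\top,w) X$. Hence $\nabla_X h(\hat X,w) = 0$ is equivalent to \eqref{eq:foc}, which immediately gives the first-order characterization stated in the last sentence of the lemma.

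Next I would compute the second-order term. Differentiating $\nabla_X h(X,w) = 2\nabla_M f(XX^\top,w)X$ once more along a direction $U \in \RR^{n\times r}$ produces two contributions: one from differentiating the factor $X$, giving $2\nabla_M f(XX^\top,w)U$, and one from differentiating $\nabla_M f$ through its argument $XX^\top$, giving $2[\nabla^2_M f(XX^\top,w)](XU^\top + UX^\top, \cdot)\, X$. Taking the inner product with $U$ and simplifying via symmetry yields the quadratic form
\[
[\nabla^2_X h(X,w)](U,U) = 2\langle \nabla_M f(XX^\top,w), UU^\top\rangle + [\nabla^2_M f(XX^\top,w)](XU^\top + UX^\top, XU^\top + UX^\top).
\]
A second-order critical point is one where $\nabla_X h = 0$ and $[\nabla^2_X h](U,U) \geq 0$ for all $U$, which is exactly \eqref{eq:foc} together with \eqref{eq:soc}. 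The "only if" direction is just the standard necessary conditions for a local minimum of a smooth unconstrained function; the "if" direction is the definition of a second-order critical point, so both directions are immediate once the gradient and Hessian formulas are in hand.

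One subtlety I would be careful about is the distinction between second-order critical points (vanishing gradient and PSD Hessian) and genuine local minimizers — the lemma is phrased for the former, so no strict positivity or higher-order analysis is needed, and the equivalence is purely formal. The main obstacle, such as it is, is the bookkeeping in the Hessian computation: one must correctly account for the fact that the map $X \mapsto XX^\top$ is quadratic, so its second derivative contributes the $\langle \nabla_M f, UU^\top\rangle$ term, while the chain rule through $\nabla^2_M f$ contributes the quadratic-form term evaluated at the symmetrized differential $\hat X U^\top + U \hat X^\top$. Using the symmetry of $\nabla_M f(M,w)$ is what lets me write $\langle \nabla_M f(\hat X\hat X^\top,w), (\de X) X^\top + X(\de X)^\top\rangle = 2\langle \nabla_M f(\hat X\hat X^\top,w)\hat X, \de X\rangle$ cleanly, and similarly collapses the cross terms in the Hessian; without it the formulas would carry an extra symmetrization that I would need to track. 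Beyond that, the proof is a routine but careful application of matrix calculus.
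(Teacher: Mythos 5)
Your derivation is correct and is exactly the computation the paper has in mind: the paper itself omits the proof, remarking only that it follows by "simply deriving the gradient and Hessian of the unconstrained problem," and your gradient $\nabla_X h = 2\nabla_M f(XX^\top,w)X$ and Hessian quadratic form match the well-known identities (which one can also see cleanly by Taylor-expanding $h(X+tU)=f(XX^\top + t(\hat XU^\top+U\hat X^\top)+t^2UU^\top, w)$ in $t$). Your remark distinguishing a second-order critical point from a genuine local minimizer is apt and, if anything, more careful than the paper's own commentary following the lemma.
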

Note that for the unconstrained optimization problem \eqref{eq:main_noisy_problem}, a second-order critical point is a local minima. This lemma can be proved by simply deriving the gradient and Hessian of the unconstrained problem \eqref{eq:main_noisy_problem}. Thus, the proof is omitted for brevity. 

\section{MISSING PROOFS}

\subsection{Proofs of Section~\ref{sec:location_global}}
\label{sec:appendix_global}

\begin{lemma}
	\label{lem:lb_lamb_r}
	If $\hat X$ is a local minimum of \eqref{eq:main_noisy_problem} with $\hat M = \hat X \hat X^\top$, then
	\begin{equation}
		\label{eq:lb_lamb_r}
		\lambda_r^2(\hat M) \geq \frac{G^2}{(1+\delta+\zeta_2 q)^2}
	\end{equation}
	where $G = -\lambda_{\min}(\nabla_M f(\hat M, w))$.
\end{lemma}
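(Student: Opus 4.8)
The plan is to exploit the first-order optimality condition \eqref{eq:foc}, namely $\nabla_M f(\hat M, w)\hat X = 0$, combined with the RIP-type curvature bound on the Hessian and the noise perturbation bound \eqref{eq:noise_perturb_hess}. Write $G = -\lambda_{\min}(\nabla_M f(\hat M,w))$ and let $v$ be a unit eigenvector of $\nabla_M f(\hat M,w)$ associated with the eigenvalue $-G$. Since $\nabla_M f(\hat M,w)\hat X = 0$, the column space of $\hat X$ lies in the kernel of $\nabla_M f(\hat M,w)$, so $v$ is orthogonal to $\mathrm{col}(\hat X)$ whenever $G > 0$ (if $G = 0$ the inequality is trivial). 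The idea is to build a rank-one probe matrix from $v$ and a suitably chosen direction inside $\mathrm{col}(\hat X)$, feed it into the perturbation inequality \eqref{eq:noise_perturb_hess}, and read off a bound that relates $G$, the RIP constant, the noise level $q$, and $\lambda_r(\hat M)$.

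Concretely, first I would express $\langle \nabla_M f(\hat M,w), N\rangle$ for a rank-$\le 2r$ matrix $N$ by integrating the Hessian along the segment from $M^*$ to $\hat M$ (or directly from the definition), using $\nabla_M f(M^*,0) = 0$ from \eqref{eq:gt_nabla_zero} and the gradient perturbation bound \eqref{eq:noise_perturb_grad}. This gives a handle on $G = -\langle \nabla_M f(\hat M,w), vv^\top\rangle$ since $vv^\top$ has rank one. Then, using that the noiseless Hessian acts as an approximate identity (RIP: curvature in $[1-\delta,1+\delta]$) and \eqref{eq:noise_perturb_hess} to control the noisy deviation by $\zeta_2 q$, I would bound $|\langle \nabla_M f(\hat M,w), vv^\top\rangle|$ from above by $(1+\delta+\zeta_2 q)$ times a Frobenius-norm quantity involving the component of $\hat M - M^*$ that $v$ "sees." The crucial geometric observation is that because $v \perp \mathrm{col}(\hat X)$, the relevant inner product picks out $v^\top M^* v$ (up to the perturbation terms), and one can relate this to $\lambda_r(\hat M)$ via an interlacing/variational argument on the complementary subspace.

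Putting these together, I expect the chain $G \le (1+\delta+\zeta_2 q)\cdot(\text{something} \le \lambda_r(\hat M))$ to emerge, which upon squaring yields \eqref{eq:lb_lamb_r}. The main obstacle will be step two: carefully identifying \emph{which} scalar quantity plays the role sandwiched between $G/(1+\delta+\zeta_2 q)$ and $\lambda_r(\hat M)$, and justifying the inequality $v^\top M^* v \le \lambda_r(\hat M)$ (or its analogue) using only the first-order condition and the rank constraints. This requires a delicate eigenvalue/subspace argument — essentially showing that the direction $v$ orthogonal to $\mathrm{col}(\hat X)$ cannot "overlap" with $M^*$ more than the smallest eigenvalue of $\hat M$ allows — and getting the RIP constants to line up exactly as $(1+\delta+\zeta_2 q)^2$ rather than a looser bound. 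The positivity/rank bookkeeping ($\hat M \succeq 0$, rank $\le r$, and rank-$\le 2r$ test matrices) must be tracked throughout to stay within the regime where the $\delta$-RIP$_{2r,2r}$ hypothesis applies.
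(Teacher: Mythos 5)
The obstacle you flag in your last paragraph---establishing something like $v^\top M^* v \leq \lambda_r(\hat M)$---is a genuine gap, and I do not see how to close it. After integrating the Hessian from $M^*$ to $\hat M$ and using $v^\top \hat M v = 0$, your argument controls $G$ by (roughly) $v^\top M^* v$ plus RIP and noise error terms. That is a statement about $M^*$. The Lemma, however, asserts $\lambda_r(\hat M) \geq G/(1+\delta+\zeta_2 q)$, a bound that involves only $\hat M$ and the gradient at $\hat M$, with no reference to $M^*$ whatsoever. Nothing in the hypotheses constrains $v^\top M^* v$ by $\lambda_r(\hat M)$: the vector $v$ is orthogonal to the column space of $\hat X$, but it may have arbitrary overlap with the column space of $M^*$, and $M^*$ may have eigenvalues much larger than those of $\hat M$. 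Moreover, Lemma~\ref{lem:lb_lamb_r} is precisely one of the ingredients used later (in the proof of Theorem~\ref{thm:global_local_min}) to conclude that $\hat M$ is close to $M^*$, so you cannot presuppose such proximity here.

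The paper's proof is entirely local and never touches $M^*$. Write the SVD $\hat M = \sum_{i=1}^r \sigma_i u_i u_i^\top$ (the rank-deficient case is handled separately), let $u_G$ be a unit eigenvector of $\nabla_M f(\hat M,w)$ with eigenvalue $-G$, and for $p\in[0,1]$ consider the rank-$r$ PSD perturbation
\[
  M_p = \sum_{i=1}^{r-1}\sigma_i u_i u_i^\top + \sigma_r\,\bigl(p\, u_G + \sqrt{1-p^2}\,u_r\bigr)\bigl(p\, u_G + \sqrt{1-p^2}\,u_r\bigr)^\top.
\]
First-order optimality gives $\nabla_M f(\hat M,w)\,u_i = 0$, hence $u_G \perp u_i$ for all $i$ (the same orthogonality you invoke), and one computes $\langle \nabla_M f(\hat M, w), M_p - \hat M\rangle = -G p^2\sigma_r$ together with $\|M_p - \hat M\|_F^2 = 2\sigma_r^2 p^2$. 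If $G > \sigma_r(1+\delta+\zeta_2 q)$, the linear term in the Taylor expansion of $f(M_p,w)$ around $\hat M$ strictly dominates the quadratic term, which is bounded above by $\tfrac{1}{2}(1+\delta+\zeta_2 q)\|M_p-\hat M\|_F^2$ using the RIP property and \eqref{eq:noise_perturb_hess}; hence $f(M_p,w) < f(\hat M,w)$ for small $p$, contradicting local minimality of $\hat X$. The key difference from your plan is that the RIP bound and \eqref{eq:noise_perturb_hess} are applied to the Hessian along the segment from $\hat M$ to $M_p$, not from $M^*$ to $\hat M$, and the structure of $M_p$ is what makes $\sigma_r = \lambda_r(\hat M)$ appear in the final inequality.
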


\begin{proof}[Proof of Lemma \ref{lem:lb_lamb_r}]
	First consider the case where $\rk(\hat M) = r$. Under this assumption, consider the singular value decomposition (SVD) of $\hat M$:
	\[
		\hat M = \sum_{i=1}^{r} \sigma_i u_i u_i^\top,
	\]
	where $\sigma_i$'s are eigenvalues and $u_i$'s are unit eigenvectors. Let $u_G$ be a unit eigenvector of $\nabla f(\hat M, w)$ such that $u_G^\top \nabla f(\hat M, w) u_G = -G$. Furthermore, for a constant $p \in [0,1]$, define:
	\begin{equation*}
		\begin{aligned}
			M_p = \sum_{i=1}^{r-1} \sigma_i u_i u_i^\top + \sigma_r (p u_G + \sqrt{1-p^2} u_r)(p u_G + \sqrt{1-p^2} u_r)^\top.
		\end{aligned}
	\end{equation*}
	One can write:
	\begin{equation*}
		\begin{aligned}
			\langle \nabla_M f(\hat M, w), M_p - \hat M \rangle &= \langle \nabla_M f(\hat M, w),  \sigma_r p^2 u_G u_G^\top \rangle \\
			&= -G p^2 \sigma_r.
		\end{aligned}
	\end{equation*}
	since $ \nabla f(\hat M, w) u_i = \nabla f(\hat M, w)^\top u_i = 0 \ \forall i \in \{1,\dots,r\}$. This is because $\hat X$ is a local minimum, and \eqref{eq:foc} is a necessary condition according to Lemma \ref{lem:sosp}. We could choose a SVD of $\hat M$ such that:
	\[
		\hat X = \begin{bmatrix}
			\sigma_1^{1/2} u_1 & \sigma_2^{1/2} u_2 & \dots & \sigma_r^{1/2} u_r
		\end{bmatrix}.
	\]
	Now, we expand the term $\|M_p - M\|^2_F$:
	\begin{equation*}
		\begin{aligned}
			\|M_p - \hat M\|^2_F = &\sigma_r^2 \tr \left((p^2 u_G u_G^\top + p\sqrt{1-p^2} u_G u_r^\top + p \sqrt{1-p^2} u_r u_G^\top -p^2 u_r u_r^\top)^2 \right) \\
			= & \sigma_r^2 (p^4 + p^2(1-p^2) + p^2 (1-p^2) + p^4) \\
			=& 2 \sigma_r^2 p^2.
		\end{aligned}
	\end{equation*}
	where the second equality follows from the fact that $u_G^\top u_i = 0 \ \ \forall i \in \{1,\dots,r\}$. This is due to the fact that
	\[
		u_G^\top u_i = \left(\frac{-1}{G} \nabla_M f(\hat M, w) u_G \right)^\top u_i = 0.
	\]
	This means that $\langle \nabla_M f(\hat M, w), M_p - \hat M \rangle = - \frac{G}{2 \sigma_r} \|M_p - \hat M\|^2_F$.
	Next, we proceed with the proof by contradiction. First, assume that $G > \sigma_r (1+\delta + \zeta_2 q)$. Then, there exists a small constant $c$ such that:
	\begin{equation}
		\label{eq:foe_ub}
		\langle \nabla_M f(\hat M, w), M_p - \hat M \rangle < -\frac{(1+\delta+\zeta_2 q)+c}{2} \|M_p - \hat M\|^2_F.
	\end{equation}
	Second, combining the Taylor expansion of $f(M,w)$ in terms of $M$ at the point $\hat M$ with the mean-value theorem gives:
	\begin{equation*}
		\begin{aligned}
			f(M_p,w) = &f(\hat M,w) + \langle \nabla_M f(\hat M,w), M_p - \hat M \rangle + \\
			&\frac{1}{2}[\nabla^2 f(\tilde M,w)](M_p - \hat M,M_p - \hat M),
		\end{aligned}
	\end{equation*}
	for some matrix $\tilde M$ that is a convex combination of $M_p$ and $\hat M$. Due to the RIP assumption and \eqref{eq:noise_perturb_hess}, we have:
	\begin{equation}
		\label{eq:foe_lb}
		\begin{aligned}
			f(M_p,w) \leq &f(\hat M,w) + \langle \nabla_M f(\hat M,w), M_p - \hat M \rangle + \\
			&\frac{1}{2}[(1+\delta+\zeta_2 q)+c] \|M_p - \hat M\|_F^2,
		\end{aligned}
	\end{equation}
	for the same small constant $c$ used above. Therefore, by combining \eqref{eq:foe_ub} and \eqref{eq:foe_lb}, we have:
	\[
		f(M_p,w) < f(\hat M,w),
	\]
	which is a contradiction due to the fact that $\hat X$ is a local minimum since we can adjust $p$ to make $M_p$ arbitrarily close to $\hat M = \hat X \hat X^\top $ and that $M_p$ is a positive semidefinite matrix of rank $r$. This further leads to the conclusion that $G \leq \sigma_r (1+\delta + \zeta_2 q)$, consequently leading to \eqref{eq:lb_lamb_r}.
	
	Then consider the case where $\rk(\hat M) < r$. By \cite{ha2020equivalence}, we know that $\hat M$ is a critical point of \eqref{eq:unfactored_noisy_problem}, meaning that if $\rk(\hat M) < r$, $\nabla_M f(\hat M,w) = 0$. Therefore $G = 0$ and \eqref{eq:lb_lamb_r} is trivially satisfied since $\lambda_r(\hat M) = 0$.
\end{proof}

\begin{proof}[Proof of Theorem \ref{thm:global_local_min}]
	Define $\hat M \coloneqq \hat X \hat X^\top$ and
	\begin{equation}
		\bar M \coloneqq \hat M - \frac{1}{1+\delta+\zeta_2 q} \nabla_M f(\hat M, w).
	\end{equation}
	Additionally, define $\phi(\cdot)$ as
	\[
		\phi(M) \coloneqq \langle \nabla_M f(\hat M,w) , M - \hat M \rangle + \frac{1+\delta+\zeta_2 q}{2} \| M - \hat M\|^2_F.
	\]
	Now,
	\begin{equation*}
		\begin{aligned}
			\frac{1+\delta+\zeta_2 q}{2} \|M-\bar M\|^2_F &= \frac{1+\delta+\zeta_2 q}{2} \|M- \hat M + \frac{1}{1+\delta+\zeta_2 q} \nabla_M f(\hat M, w) \|^2_F \\
			&=  \frac{1+\delta+\zeta_2 q}{2} \| M - \hat M\|^2_F + \langle \nabla_M f(\hat M,w) , M - \hat M \rangle + \frac{1}{(1+\delta+\zeta_2 q)^2} \| \nabla_M f(\hat M,w) \|^2_F \\
			&= \phi(M) + \text{constant with respect to} \ M.
		\end{aligned}
	\end{equation*}
	Define $\projr(M)$ of an arbitrary matrix $M$ to be the projection of $M$ on a low-rank manifold of rank at most $r$:
	\[
		\projr(M) = \argmin_{M_r \in \mathcal{M}} \| M_r -M\|_F, \qquad  \mathcal{M} \coloneqq \{M \in \mathbb{S}^{n \times n} \vert \rk(M) \leq r, M \succeq 0 \}
	\]
	Then by the Eckart-Young-Mirsky Theorem, $\phi(\projr(\bar M))$ achieves the minimum value of the function $\phi(\cdot)$ over all matrices of rank at most $r$. Therefore,
	\begin{equation}
	\label{eq:thm_1_help_1}
		\begin{aligned}
			-\phi(\projr(\bar M)) \geq -\phi(M^*) = \langle \nabla_M f(\hat M,w), \hat M - M^* \rangle - \frac{1+\delta+\zeta_2 q}{2} \|M^*-\hat M\|^2_F.
		\end{aligned}
	\end{equation}
	
	Next, we apply the Taylor expansion to $f(M,w)$ at $\hat M$ and combine it with the RIP property to obtain
	\begin{equation}
		\label{eq:thm_1_help_2}
		\begin{aligned}
			f(M^*,w) \geq f(\hat M,w) + \langle \nabla_M f(\hat M,w), M^* - \hat M \rangle + \frac{1-\delta-\zeta_2 q}{2} \|M^* - \hat M\|^2_F.
		\end{aligned}
	\end{equation}
	Additionally, by expanding at $M^*$, we can also write:
	\begin{equation}
	\label{eq:thm_1_help_3}
		\begin{aligned}
			f(\hat M,w) - f(M^*,w) &\geq \langle \nabla_M f(M^*,w), \hat M - M^* \rangle + \frac{1-\delta-\zeta_2 q}{2} \|\hat M - M^*\|^2_F \\
			 &\geq \frac{1-\delta-\zeta_2 q}{2} \|\hat M - M^*\|^2_F - \zeta_1 q \|\hat M - M^*\|_F
		\end{aligned}
	\end{equation}
	where the second inequality follows from \eqref{eq:noise_perturb_grad} and the fact that $M^*$ is the ground truth. Substituting \eqref{eq:thm_1_help_2} into \eqref{eq:thm_1_help_1} gives:
	\begin{equation*}
		\begin{aligned}
			-\phi(\projr(\bar M)) \geq f(\hat M,w) - f(M^*,w) - (\delta + \zeta_2 q) \|\hat M - M^*\|^2_F,
		\end{aligned}
	\end{equation*}
	and a further substitution of \eqref{eq:thm_1_help_3} into the above equation gives:
	\begin{equation}
		\label{eq:thm1_help_4}
		\begin{aligned}
			-\phi(\projr(\bar M)) \geq  \frac{1-3\delta - 3 \zeta_2 q}{2} \|\hat M - M^*\|^2_F - \zeta_1 q \|\hat M - M^*\|_F.
		\end{aligned}
	\end{equation}
	We denote 
	\[
		L \coloneqq \frac{1-3\delta - 3 \zeta_2 q}{2} \|\hat M - M^*\|^2_F - \zeta_1 q \|\hat M - M^*\|_F.
	\]
	Next, for the notational simplicity of the ensuing sections, define:
	\[
		N \coloneqq -\frac{1}{1+\delta+\zeta_2 q} \nabla f(\hat M,w),
	\]
	implying that $\bar M = \hat M + N$. Then,
	\begin{equation*}
		\begin{aligned}
			-\phi(\projr(\bar M)) &= (1+\delta+\zeta_2 q) \langle N, \projr(\hat M+N) - \hat M \rangle - \frac{1+\delta+\zeta_2 q}{2} \| \projr(\hat M+N) - \hat M \|^2_F \\
			 &=  \frac{1+\delta+\zeta_2 q}{2} (\|N\|^2_F - \|\hat M+N-\projr(\hat M+N)\|^2_F) \\
			 &=  \frac{1+\delta+\zeta_2 q}{2} (\|N\|^2_F - \|\hat M+N\|^2_F+\|\projr(\hat M+N)\|^2_F).
		\end{aligned}
	\end{equation*}
	Since $\hat X$ is a local minimizer of \eqref{eq:main_noisy_problem}, it must be a first-order critical point. Therefore, \eqref{eq:foc} holds true, meaning that $\hat M$ and $N$ have orthogonal column/row spaces, leading to $\|\hat M+N\|^2_F = \|\hat M\|^2_F + \|N\|^2_F$.
	
	Furthermore, due to the orthogonal nature of $\hat M$ and $N$, $\|\projr(\hat M+N)\|^2_F$ is simply the sum of the squares of the maximal $r$ eigenvalues of $\hat M$ and $N$ combined, which we assume to be $\lambda_i(\hat M), i \in \{1,\dots,k\}$ and $\lambda_i(N), i \in \{1,\dots,r-k\}$. Therefore,
	\[
		\|\projr(\hat M+N)\|^2_F = \sum_{i=1}^{k} \lambda_i(\hat M)^2 + \sum_{i=1}^{r-k} \lambda_i(N)^2.
	\]
	Subsequently,
	\begin{equation*}
		\begin{aligned}
		 -\phi(\projr(\bar M)) &= \frac{1+\delta+\zeta_2 q}{2} (- \sum_{i=1}^r \lambda_i(\hat M)^2 + \sum_{i=1}^{k} \lambda_i(\hat M)^2 + \sum_{i=1}^{r-k} \lambda_i(N)^2) \\
			&= \frac{1+\delta+\zeta_2 q}{2}  (-  \sum_{i=k+1}^{r} \lambda_i(\hat M)^2 + \sum_{i=1}^{r-k} \lambda_i(N)^2) \\
			&\leq  \frac{1+\delta+\zeta_2 q}{2} (- (r-k) \lambda_r^2(\hat M) + (r-k) \lambda_{\max}^2(N)).
		\end{aligned}
	\end{equation*}
	Then invoking \eqref{eq:thm1_help_4} gives:
	\begin{equation}
		\label{eq:thm1_help_5}
		(r-k) \lambda_r^2(\hat M) \leq - \frac{2 L}{1+\delta+\zeta_2 q} + (r-k) \frac{G^2}{(1+\delta+\zeta_2 q)^2},
	\end{equation}
	where $G = -\lambda_{\min}(\nabla f(\hat M, w))$.
	
	First, assume that $k < r$ and $\lambda_r(\hat M) >0$. We have
	\begin{equation}
		\label{eq:ub_lambda_r}
		\lambda_r^2(\hat M) \leq \frac{G^2}{(1+\delta+\zeta_2 q)^2} - \frac{2}{1+\delta+\zeta_2 q} \frac{L}{r-k},
	\end{equation}
	Now, recall Lemma \ref{lem:lb_lamb_r}, which also holds for all local minimizers $\hat X$. A necessary and sufficient condition for both Lemma  \ref{lem:lb_lamb_r} and \eqref{eq:ub_lambda_r} to hold is that:
	\begin{equation}
		\label{eq:l_neg}
		L \leq 0,
	\end{equation}
	subsequently meaning that,
	\begin{equation*}
		\begin{aligned}
			(1-3\delta-3\zeta_2 q)\|\hat M - M^*\|^2_F - 2 \zeta_1 q \|\hat M - M^*\|_F \leq 0
		\end{aligned}
	\end{equation*}
	which directly gives \eqref{eq:global_local_min_range} after simple rearrangements.
	
	In the case that $k=r$ or $\lambda_r(\hat M)=0$, \eqref{eq:thm1_help_5} reduces to \eqref{eq:l_neg} as well, leading to the same result presented in \eqref{eq:global_local_min_range}.
	
\end{proof}

\subsection{Proof of Section~\ref{sec:location_local}}
\label{sec:appendix_local}
Given a matrix $\hat X$, we aim to find the smallest $\delta$ such that there is an instance of the problem with this RIP constant for which $\hat X$ is a local minimizer that is not associated with the ground truth. For notational convenience, we denote this optimal value as $\delta^*(\hat X)$. Namely, $\delta^*(\hat X)$ is the optimal value to the following optimization problem:
\begin{equation}\label{eq:intro_lmi}
\begin{aligned}
\min_{\delta,f(\cdot,w)} \quad & \delta \\
\text{s.t.} \quad & \text{$\hat{X}$ is a local minimizer of $f(\cdot,w)$} , \\
& \text{$f(\cdot,0)$ satisfies the $\delta$-$\RIP_{2r}$ property}.
\end{aligned}
\end{equation}
By the above optimization problem, we know that $\delta \geq \delta^*(\hat X)$ for all local minimizers $\hat X$ of $f(\cdot,w)$, where $\delta$ is the best RIP constant of the problem. Since \eqref{eq:intro_lmi} is difficult to analyze, we replace its two constraints with some necessary conditions, thus forming a relaxation of the original problem with its optimal value being a lower bound on $\delta^*(\hat X)$.

To find a necessary condition replacing the two constraints, we introduce the following lemma. This is the first lemma that captures the necessary conditions of a critical point of \eqref{eq:main_noisy_problem}, a problem where random noise is considered.
\begin{lemma}
	\label{lem:noisy_foc_nec}
	Assume that the objective function $f(M,w)$ of \eqref{eq:main_noisy_problem} satisfies all assumptions in Section \ref{sec:assump}, and that $\hat X$ is a first-order critical point of \eqref{eq:main_noisy_problem}. Then, $\hat X$ must satisfy the following conditions for some symmetric matrix $\mathbf{H} \in \mathbb{R}^{n^2 \times n^2}$:
	\begin{enumerate}
		\item $\|\hat{\mathbf X}^\top \mathbf{H} \mathbf{e} \| \leq 2 \zeta_1 q \|\hat X\|_2$
		\item $\mathbf{H}$ satisfies the $(\delta+\zeta_2 q)$-RIP$_{2r,2r}$ property, which means that the inequality
		\begin{equation}
		\label{eq:foc_nec_lem_rip}
			(1-\delta-\zeta_2 q) \|M\|^2_F \leq \mathbf m^\top \mathbf{H} \mathbf m \leq (1+\delta+\zeta_2 q)  \|M\|^2_F 
		\end{equation}
		holds for every matrix $M \in \mathbb{R}^{n \times n}$ with $\rk(M)\leq 2r$, where  $\mathbf m = \vecc(M)$ and $\mathbf{e} = \vecc(\hat X\hat X^\top-M^*)$. $\mathbf{\hat X}$ is defined as per Section \ref{sec:notation}.
	\end{enumerate}
\end{lemma}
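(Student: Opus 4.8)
\textbf{Proof proposal for Lemma \ref{lem:noisy_foc_nec}.}

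The plan is to start from the first-order optimality condition \eqref{eq:foc} for the noisy problem, namely $\nabla_M f(\hat X\hat X^\top,w)\hat X = 0$, and to ``realize'' the Hessian $\nabla^2_M f(\cdot,0)$ as an explicit positive-definite operator $\mathbf H$ acting on $\vecc$-space. Since $f(\cdot,0)$ satisfies $\delta$-RIP$_{2r,2r}$, the restriction of $\nabla^2_M f(\cdot,0)$ to the cone of matrices of rank at most $2r$ behaves like a near-isometry; the natural candidate for $\mathbf H$ is a symmetric matrix in $\mathbb R^{n^2\times n^2}$ whose quadratic form agrees with $[\nabla^2_M f(M_0,0)](\cdot,\cdot)$ on that cone, for a suitably chosen base point $M_0$ (most likely a convex combination of $\hat X\hat X^\top$ and $M^*$ arising from a mean-value/Taylor argument, so that $\mathbf e = \vecc(\hat X\hat X^\top - M^*)$ is the relevant displacement). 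Property~2, the $(\delta+\zeta_2 q)$-RIP of $\mathbf H$, should then follow by combining Assumption~2 (RIP of the noiseless Hessian) with Assumption~3, inequality \eqref{eq:noise_perturb_hess}: the noise perturbs the Hessian quadratic form by at most $\zeta_2 q\|K\|_F\|L\|_F$, which inflates the RIP constant additively by exactly $\zeta_2 q$.

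For Property~1, the idea is to rewrite the gradient $\nabla_M f(\hat M,w)$ using the fundamental theorem of calculus along the segment from $M^*$ to $\hat M = \hat X\hat X^\top$:
\begin{equation*}
\nabla_M f(\hat M,w) - \nabla_M f(M^*,w) = \int_0^1 [\nabla^2_M f(M^* + t(\hat M - M^*),w)](\hat M - M^*,\cdot)\,\de t,
\end{equation*}
and to observe that $\nabla_M f(M^*,0)=0$ by \eqref{eq:gt_nabla_zero}, so $\nabla_M f(M^*,w)$ is controlled by $\zeta_1 q$ via \eqref{eq:noise_perturb_grad}. The integrated Hessian along the segment is, up to the noise perturbation bounded by \eqref{eq:noise_perturb_hess}, precisely the action of an $\mathbf H$ satisfying Property~2; in $\vecc$-notation the identity becomes $\nabla_M f(\hat M,w) \approx \mathbf H \mathbf e + (\text{noise term of size }\zeta_1 q)$ where $\mathbf H$ absorbs the integrated noiseless Hessian. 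Left-multiplying the optimality condition $\nabla_M f(\hat M,w)\hat X = 0$ by the appropriate $\vecc$-lifting of $\hat X$ (this is where $\hat{\mathbf X}$ enters: $\hat{\mathbf X}^\top$ applied to a vectorized matrix $N$ yields $\vecc(N\hat X)$ up to symmetrization) converts \eqref{eq:foc} into $\hat{\mathbf X}^\top \mathbf H \mathbf e = -\hat{\mathbf X}^\top(\text{noise gradient terms})$, and bounding the right-hand side by $2\zeta_1 q\|\hat X\|_2$ using \eqref{eq:noise_perturb_grad} (the factor $2$ coming from the two noise contributions — the displacement of $\nabla_M f(M^*,w)$ from zero, and the Hessian-perturbation term integrated against $\mathbf e$, or alternatively from the symmetrization in the definition of $\hat{\mathbf X}$) gives Property~1.

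The main obstacle I anticipate is the bookkeeping needed to produce a \emph{single} matrix $\mathbf H$ that simultaneously (a) is symmetric and globally $(\delta+\zeta_2 q)$-RIP$_{2r,2r}$ as in \eqref{eq:foc_nec_lem_rip} and (b) exactly reproduces the integrated-Hessian action appearing in the gradient identity, since the integral over $t$ mixes Hessians at different base points, none of which is literally a fixed quadratic form on $\mathbb R^{n^2}$. The standard fix — which I expect the paper uses — is to define $\mathbf H$ as a rank-restricted ``effective'' matrix: choose $\mathbf H$ so that $\mathbf m^\top \mathbf H \mathbf m = [\nabla^2_M f(M_t,0)](M,M)$ only needs to hold for rank-$\le 2r$ matrices $M$ (hence the $_{2r,2r}$ qualifier), extend arbitrarily off the cone, and note that $\hat M - M^*$ has rank at most $2r$ and $\hat X$ has rank at most $r$, so every quadratic form we actually evaluate lives on the cone where the construction is tight. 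Making this rank restriction precise, and checking that the chosen $M_t$ keeps all invoked matrices within rank $2r$ so that both Assumptions~2 and 3 apply, is the technical heart of the argument; everything else is Taylor expansion and triangle-inequality estimation.
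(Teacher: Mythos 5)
Your strategic outline matches the paper's proof: test the first-order condition $\nabla_X h(\hat X,w)=0$ against symmetrized directions $\hat X U^\top + U\hat X^\top$, apply the fundamental theorem of calculus along the segment from $M^*$ to $\hat M=\hat X\hat X^\top$ to pull out an integrated-Hessian bilinear form, identify that form with $\mathbf H$, and bound the residual via Assumption~3. Two points of bookkeeping deserve correction, though. First, the paper's $\mathbf H$ is the integrated \emph{noisy} Hessian, i.e.\ $\vecc(K)^\top\mathbf H\vecc(L)=\int_0^1[\nabla^2_M f(tM^*+(1-t)\hat M,\,w)](K,L)\,\de t$, not a noiseless one; with that choice the mean-value identity $g(\hat M)-g(M^*)=\mathbf e^\top\mathbf H\hat{\mathbf X}u$ is \emph{exact}, and the only noise term to estimate is $g(M^*)=\langle\nabla_M f(M^*,w),\hat XU^\top+U\hat X^\top\rangle$, which is bounded via \eqref{eq:gt_nabla_zero} and \eqref{eq:noise_perturb_grad}. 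The factor $2$ in Property~1 is therefore purely from the symmetrization $\|\hat XU^\top+U\hat X^\top\|_F\le2\|\hat XU^\top\|_F$, not from a second Hessian-perturbation contribution; your ``two noise contributions'' reading would arise only if you used the noiseless Hessian, and that choice would leave an extra $\zeta_2 q\|\mathbf e\|\|\hat X\|_2$ on the right of Property~1 and would give $\delta$-RIP rather than $(\delta+\zeta_2 q)$-RIP, which is not what the lemma asserts. Second, the obstacle you anticipate about ``extending $\mathbf H$ off the cone'' does not exist: the integral of finitely many fixed symmetric bilinear forms on $\mathbb R^{n^2}$ is again a fixed symmetric bilinear form on all of $\mathbb R^{n^2}$, so $\mathbf H$ is defined globally and uniquely; the rank-$\le 2r$ restriction enters only when \emph{verifying} RIP, at which point one uses that the base points $tM^*+(1-t)\hat M$ along the segment automatically have rank $\le 2r$ (since both endpoints have rank $\le r$), so Assumptions~2 and~3 apply pointwise in $t$ and integrate to \eqref{eq:foc_nec_lem_rip}.
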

Given Lemma \ref{lem:noisy_foc_nec}, we can obtain a relaxation of problem \eqref{eq:intro_lmi}, namely the following optimization problem:
	\begin{equation}\label{eq:local_lmi_delta_2r}
\begin{aligned}
\min_{\delta,\mathbf H} \quad & \delta \\
\st \quad & \norm{\hat{\mathbf X}^\top \mathbf H \mathbf e} \leq 2 \zeta_1 q \norm{\hat X}_2, \\
& (1-\delta-\zeta_2 q) \|M\|^2_F \leq \mathbf m^\top \mathbf{H} \mathbf m \leq \\ &(1+\delta+\zeta_2 q)  \|M\|^2_F, \ \ \forall M: \rk(M) \leq 2r .
\end{aligned}
\end{equation}
where $\mathbf m = \vecc(M)$. Note that since the second constraint is hard to deal with, so we solve the following problem that has the same optimal value (as proved in Lemma 14 of \cite{bi2020global}):
	\begin{equation}\label{eq:local_lmi_delta}
\begin{aligned}
\min_{\delta,\mathbf H} \quad & \delta \\
\st \quad & \norm{\hat{\mathbf X}^\top \mathbf H \mathbf e} \leq 2 \zeta_1 q \norm{\hat X}_2, \\
& (1-\delta-\zeta_2 q)I_{n^2} \preceq \mathbf H \preceq (1+\delta+ \zeta_2 q)I_{n^2}.
\end{aligned}
\end{equation}
If the optimal value of \eqref{eq:local_lmi_delta} is denoted as $\delta_f^*(\hat X)$, then we know that $\delta_f^*(\hat X) \leq \delta^*(\hat X) \leq \delta$ due to \eqref{eq:local_lmi_delta_2r} being a relaxation of \eqref{eq:intro_lmi}. By further lower-bounding $\delta_f^*(\hat X)$ with an expression in terms of $\|\hat X \hat X^\top - M^*\|_F$, we can obtain an upper bound on $\|\hat X \hat X^\top - M^*\|_F$. 

\begin{proof}[Proof of Lemma~\ref{lem:noisy_foc_nec}]
	Similar to the last section, we first define $\hat M = \hat X \hat X^\top$. Since $\hat X$ is a first-order critical point, it follows from \eqref{eq:foc} that $\nabla_X h(\hat X,w) = 0$. Thus,
	\begin{equation}
	\label{eq:foc_nec_lem_help_1}
		0 = \langle \nabla_X h(\hat X,w), U \rangle = \langle \nabla_M f(\hat M,w), \hat X U^\top+U \hat X^\top \rangle,
	\end{equation}
	for an arbitrary $U \in \mathbb{R}^{n \times r}$. Let $u = \vecc(U)$.
	
	Next, we define the function $g(\cdot): \mathbb{R}^{n \times n} \mapsto \mathbb{R}$:
	\begin{equation*}
		g(V) = \langle \nabla_M f(V,w), \hat X U^\top+U \hat X^\top \rangle,
	\end{equation*}
	for all $V \in \mathbb{R}^{n \times n}$. Then, $g(\hat M) = 0$ due to \eqref{eq:foc_nec_lem_help_1}.
	
	By the mean-value theorem (MTV), we have:
	\begin{equation*}
		\begin{aligned}
			g(\hat M) - g(M^*) &= \int_0^1 \langle \nabla g(tM^*+(1-t)\hat M), \hat M- M^* \rangle \mathrm{d} t \\
			&= \int_0^1 [ \nabla_M^2 f(tM^*+(1-t)\hat M)](\hat M- M^*,  \hat X U^\top+U \hat X^\top)  \mathrm{d} t \\
			&= \mathbf{e}^\top \mathbf{H} \mathbf{\hat X} u
		\end{aligned}
	\end{equation*}
	where $\mathbf{H} \in \RR^{n^2 \times n^2}$ is a symmetric matrix that is independent of $U$ and satisfies:
	\[
		\vecc(K)^\top \mathbf{H} \vecc(L) = \int_0^1 [ \nabla_M^2 f(tM^*+(1-t)\hat M)](K,L) \mathrm{d} t
	\]
	for all $K, L \in \RR^{n \times n}$. This means:
	\[
		\mathbf{e}^\top \mathbf{H} \mathbf{\hat X} u = g(
		\hat M) - g(M^*).
	\]
	Taking the absolute value of both sides and upper-bounding the right-hand side gives:
	\begin{equation*}
		\begin{aligned}
			|\mathbf{e}^\top \mathbf{H} \mathbf{\hat X} u| &= |g(
		\hat M) - g(M^*)| \leq |g(M^*)| \\
		&\leq \zeta_1 q \|\hat X U^\top+U \hat X^\top\|_F \\
		&\leq 2 \zeta_1 q \|\hat X U^\top\|_F \\
		&=2 \zeta_1 q \sqrt{\tr(\hat X \hat X^\top U U^\top)} \\
		&\leq 2 \zeta_1 q \|\hat X\|_2 \|u\|,
		\end{aligned}
	\end{equation*}
	where the second line follows from combining \eqref{eq:gt_nabla_zero} and \eqref{eq:noise_perturb_grad}, and the fourth line follows from the cyclic property of trace operators.
	
	Choosing $u = \hat{\mathbf X}^\top \mathbf{H} \mathbf{e}$ can simplify the above inequality to 
	\[
		\|\hat{\mathbf X}^\top \mathbf{H} \mathbf{e} \| \leq 2 \zeta_1 q \|\hat X\|_2.
	\]
	
	Furthermore, the $\delta$-RIP$_{2r,2r}$ property of the objective function means that:
	\[
		(1-\delta) \|M\|^2_F \leq [\nabla^2 f(\xi,0)](M,M) \leq (1+\delta) \|M\|^2_F
	\]
	for all $M$ with $\rk(M) \leq 2r$. Combining with the fact that
	\[
		| \vecc(M)^\top \mathbf{H} \vecc(M) - [\nabla^2 f(\xi,0)](M,M) | \leq \zeta_2 q \|M\|^2_F,
	\]
	gives \eqref{eq:foc_nec_lem_rip}.
\end{proof}

\begin{proof}[Proof of Theorem~\ref{thm:local}]

One can replace the decision variable $\delta$ in \eqref{eq:local_lmi_delta} with $\eta$ and introduce the following optimization problem:
\begin{equation}\label{eq:local_lmi_eta}
\begin{aligned}
\max_{\eta,\hat{\mathbf H}} \quad & \eta \\
\st \quad & \norm{\hat{\mathbf X}^\top\hat{\mathbf H}\mathbf e} \leq 2 \zeta_1 q \norm{\hat X}_2, \\
& \eta I_{n^2} \preceq \hat{\mathbf H} \preceq I_{n^2}.
\end{aligned}
\end{equation}
It is easy to realize that given any feasible solution $(\delta, \mathbf H)$ for \eqref{eq:local_lmi_delta}, the following pair of points will serve as a feasible solution to \eqref{eq:local_lmi_eta}:
\[
\eta = \frac{1-\delta - \zeta_2 q}{1+\delta + \zeta_2 q}, \qquad \hat{\mathbf H} = \frac{1}{1+\delta + \zeta_2 q} \mathbf H.
\]
By denoting the optimal value of \eqref{eq:local_lmi_eta} as $\eta_f^*(\hat X)$, it holds that
\begin{equation}\label{eqn:delta_eta}
\eta_f^*(\hat X) \geq \frac{1-\delta_f^*(\hat X) - \zeta_2 q}{1+\delta_f^*(\hat X) + \zeta_2 q}  \geq \frac{1-\delta - \zeta_2 q}{1+\delta + \zeta_2 q},
\end{equation}
for all local minimizers (it is important to recall $\delta_f^*(\hat X) \leq \delta^*(\hat X) \leq \delta$). 

As stated above, the key to proving \eqref{eq:local_range} is to upper-bounding $\eta_f^*(\hat X)$. Since \eqref{eq:local_lmi_eta} is a semidefinite programming problem, finding any feasible solution of its Lagrangian dual can provide an upper bound. The dual problem is given as follows:
\begin{equation}\label{eqn:eta_dual}
\begin{aligned}
\min_{U_1,U_2,G,\lambda,y} \quad & \tr(U_2)+4 \zeta_1^2 q^2 \norm{\hat X}^2_2 \lambda+\tr(G) \\
\st \quad & \tr(U_1)=1, \\
& (\hat{\mathbf X}y)\mathbf e^\top+\mathbf e(\hat{\mathbf X}y)^\top=U_1-U_2, \\
& \begin{bmatrix}
G & -y \\
-y^\top & \lambda
\end{bmatrix} \succeq 0, \\
& U_1 \succeq 0, \quad U_2 \succeq 0.
\end{aligned}
\end{equation}
As per \cite{ma2021sharp}, define
\[
M=(\hat{\mathbf X} y)\mathbf e^\top+\mathbf e(\hat{\mathbf X} y)^\top,
\]
and decompose $M$ as $M = [M]_+ - [M]_-$ with $[M]_+ \succeq 0$ and $[M]_- \succeq 0$. Then, we find a set of feasible solutions $(U_1^*,U_2^*,G^*,\lambda^*,y^*)$ to \eqref{eqn:eta_dual}, which are:
\begin{equation*}
	\begin{aligned}
		y&^* = \frac{y}{\tr([M]_+)}, \quad U_1^* =  \frac{[M]_+}{\tr([M]_+)}, \quad U_2^* = \frac{[M]_-}{\tr([M]_+)}, \\
		& \quad G^* = \frac{y^* (y^*)^\top}{ \lambda^*}, \quad \lambda^* = \frac{\| y^* \|}{2 \zeta_1 q \norm{ \hat X}_2}.
	\end{aligned}
\end{equation*}
It is easy to verify that the above solution is feasible and has the objective value 
\begin{equation}\label{eq:dualobj}
\frac{\tr([M]_-)+ 4 \zeta_1 q \norm{\hat X}_2 \| y\|}{\tr([M]_+)}.
\end{equation}

For any matrix $\hat X \in \RR^{n \times r}$ satisfying $\norm{\hat X\hat X^\top-M^*}_F \leq \tau \lambda_r(M^*)$, we have $\hat X \neq 0$. Moreover, it has been shown in the proof of Lemma 19 in \cite{bi2020global} that any $y \neq 0$ for which $\hat X^\top \mat(y)$ is symmetric satisfies the inequality
\begin{equation}
	\norm{\hat{\mathbf X}y}^2 \geq 2\lambda_{r^*}(\hat X\hat X^\top)\norm{y}^2.
	\label{eq:yineq1}
\end{equation}
where $r^*$ is the rank of $\hat X$. Furthermore, by the Wielandt--Hoffman theorem,
\begin{align*}
	&| \lambda_{r^*}(\hat X\hat X^\top) - \lambda_{r^*}(M^*)| \leq \|\hat X\hat X^\top-M^*\|_F \leq \tau \lambda_r(M^*), \\
	&| \lambda_1(\hat X\hat X^\top) - \lambda_1(M^*)| \leq \|\hat X\hat X^\top-M^*\|_F \leq \tau \lambda_r(M^*).
\end{align*}

Thus, using the above two inequalities and \eqref{eq:yineq1}, we have
\begin{equation}\label{eq:yineq3}
\begin{aligned}
	\frac{2 \norm{\hat X}_2 \|y\|}{\|\hat{\mathbf X} y\|} \leq \frac{2 \norm{\hat X}_2}{\sqrt{2 \lambda_{r^*}(\hat X\hat X^\top)}} \leq 
	\sqrt{\frac{2(\lambda_1(M^*)+\tau \lambda_r(M^*))}{(1-\tau)\lambda_r(M^*)} } \coloneqq C(\tau,M^*).
\end{aligned}
\end{equation}
The second inequality holds because
\begin{align*}
	\lambda_{r^*}(\hat X\hat X^\top) &= \lambda_{r^*}(M^*) - (\lambda_{r^*}(M^*) - \lambda_{r^*}(\hat X\hat X^\top)) \\
	&\geq \lambda_{r^*}(M^*) - |(\lambda_{r^*}(M^*) - \lambda_{r^*}(\hat X\hat X^\top)| \\
	&\geq \lambda_r(M^*) - \tau \lambda_r(M^*) = (1-\tau) \lambda_r(M^*)
\end{align*}
Next, according to Lemma 14 of \cite{zhang2019sharp}, one can write
\begin{align*}
	&\tr([M]_+) = \|\hat{\mathbf X} y\| \|\mathbf e\| (1+ \cos \theta), \\
	&\tr([M]_-) = \|\hat{\mathbf X} y\| \|\mathbf e\| (1- \cos \theta).
\end{align*}
where $\theta$ is the angle between $\hat{\mathbf X} y$ and $\mathbf e$.
Substituting the above two equations and \eqref{eq:yineq3} into the dual objective value \eqref{eq:dualobj}, one can obtain
\[
\eta_f^*(\hat X) \leq \frac{1- \cos \theta+2 \zeta_1 q C(\tau,M^*)/\norm{\mathbf e}}{1+\cos \theta},
\]
which together with \eqref{eqn:delta_eta} implies that
\begin{equation}
	\norm{\mathbf e} \leq \frac{(1+\delta+\zeta_2 q) \zeta_1 q C(\tau,M^*)}{\cos\theta- \zeta_2 q -\delta}.
	\label{eq:local_ineq_int}
\end{equation}
Now, we seek to lower-bound $\cos(\theta)$. This amounts to taking the upper bound of $\sin^2(\theta)$. This requires us to choose a particular value of $y$. We choose the same $y$ that is described in Lemma 12 of \cite{zhang2020many}, since it makes $\hat X^\top \mat(y)$ symmetric, thereby satisfying \eqref{eq:yineq1}. From the proof of Lemma 13 of \cite{zhang2020many}, we know:
\begin{align*}
	\sin^2(\theta) = \frac{\|Z^\top(I - \hat X \hat X^\dagger) Z\|^2_F}{\|\hat X \hat X^\top - ZZ^\top \|^2_F},
\end{align*}

Since the expression of $\sin^2(\theta)$ is invariant to re-scaling, we may re-scale both $\hat X$ and $Z$ until $\|ZZ^\top \|^2_F=1$. Also, since the expression is rotationally invariant, we can partition $\hat X$ and $Z$ as follows:
\begin{equation*}
	\hat X = \begin{bmatrix}
		X_1 \\ 0
	\end{bmatrix} \hspace{2em} Z = \begin{bmatrix}
		Z_1 \\
		Z_2
	\end{bmatrix}
\end{equation*}
where $X_1, Z_1 \in \mathbb{R}^{r \times r}, Z_2 \in \mathbb{R}^{(n-r) \times r}$. We compute the QR decomposition $QR = [X,Z]$ and redefine $X \coloneqq Q^\top X, Z \coloneqq Q^\top Z$. Then, we follow the technique in Lemma 13 to arrive at:
\begin{equation*}
\begin{aligned}
	\frac{\|Z^\top(I - \hat X \hat X^\dagger) Z\|^2_F}{\|\hat X \hat X^\top - ZZ^\top \|^2_F} = \frac{\|Z_2 (Z_2)^\top\|_F^2}{\|Z_1 (Z_1)^\top - X_1 X_1^\top\|_F^2 + 2\|Z_1 (Z_2)^\top \|^2_F + \|Z_2 (Z_2)^\top\|_F^2}.
\end{aligned}
\end{equation*}
Additionally,
\begin{equation} \label{eq:z1_lower}
	\begin{aligned}
		\sigma_{\min}^2(Z_1) =&  \lambda_{\min}((Z_1)^\top(Z_1))\\
		 \geq& \lambda_{\min}((Z_1)^\top(Z_1)+(Z_2)^\top(Z_2)) - \lambda_{\max}((Z_2)^\top Z_2) \\
	=& \sigma^2_{r}(Z) - \|Z_2(Z_2)^\top\|_2 \\
	\geq &  \sigma^2_{r}(Z) - \tau \lambda_{r}(M^*) = (1-\tau) \lambda_{r}(M^*).
	\end{aligned}
\end{equation}
The last line of \eqref{eq:z1_lower} is due to
\begin{equation} \label{eq:z2_upper}
	\begin{aligned}
		\tau^2 \lambda^2_{r}(M^*) \geq& \|\hat X \hat X^\top - ZZ^\top \|^2_F \\
		= &\|Z_1 (Z_1)^\top - X_1 X_1^\top\|_F^2 + 2\|Z_1 (Z_2)^\top \|^2_F + \|Z_2 (Z_2)^\top\|_F^2 \\
	 \geq & \|Z_2 (Z_2)^\top\|^2_F,
\end{aligned}
\end{equation}
and that $\|Z_2 (Z_2)^\top\|_F \geq  \|Z_2 (Z_2)^\top\|_2$.

Subsequently, 
\begin{align*}
	\sin^2(\theta) &\leq  \frac{\|Z_2(Z_2)^\top\|^2_F}{2\|Z_1 (Z_2)^\top \|^2_F + \|Z_2 (Z_2)^\top\|_F^2 } \\
	&\leq \frac{\|Z_2(Z_2)^\top\|_F \|Z_2\|^2_F}{ 2\sigma_{\min}^2(Z_1) \|Z_2\|^2_F+ \|Z_2 (Z_2)^\top\|_F \|Z_2\|^2_F } \\
	&\leq \frac{\|Z_2(Z_2)^\top\|_F}{2(1-\tau) \lambda_{r}(M^*) + \|Z_2 (Z_2)^\top\|_F } \\
	&\leq \frac{\tau \lambda_{r}(M^*)}{2(1-\tau) \lambda_{r}(M^*) + \tau \lambda_{r}(M^*) } \\
	&\leq \frac{\tau}{ (2-\tau)} \leq \tau,
\end{align*}
where the first inequality follows from the fact that $\|Z_1 (Z_1)^\top - X_1 X_1^\top\|_F^2 \geq 0$, the third inequality follows from \eqref{eq:z1_lower}, and the fourth inequality follows from \eqref{eq:z2_upper} and the fact that the function $\frac{x}{c+x}$ is increasing with $x$ when both $c$ and $x$ are positive. 

The above bound is automatically non-vacuous, since $\sin^2(\theta) \leq \tau < 1$. Therefore,
\begin{equation*}
	\cos \theta \geq \sqrt{1-\tau},
\end{equation*}
leading to \eqref{eq:local_range} after substitution into \eqref{eq:local_ineq_int}.

\end{proof}

\subsection{Proof of Section~\ref{sec:convergence_local}}
\label{sec:appendix_linear_cov}
First and foremost, we restate this lemma from \cite{tu2016low,zhu2018global}:
\begin{lemma}
	\label{lem:dis_conversion}
	For any matrix $X \in \mathbb{R}^{n \times r}$, given a positive semidefinite matrix $M \in \mathbb{R}^{n \times n}$ of rank $r$, we have:
	\begin{equation}
		\label{eq:dist_change}
		\|XX^\top - M\|^2_F \geq 2 (\sqrt{2} -1) \sigma_r(M) (\dist(X,M))^2.
	\end{equation}
\end{lemma}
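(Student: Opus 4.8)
The statement is the classical distance lemma of \cite{tu2016low} (restated also in \cite{zhu2018global}), and the plan is to reduce to the optimally aligned factor of $M$ and then expand a Frobenius norm. Since $M\succeq 0$ has rank exactly $r$, write $M=YY^\top$ with $Y\in\RR^{n\times r}$ of full column rank; then $\{Z:ZZ^\top=M\}$ equals the orbit $\{YR: R^\top R=I_r\}$, so $\dist(X,M)=\min_{R^\top R=I_r}\|X-YR\|_F$, attained at some $R^\star$. Put $Z:=YR^\star$, so that $\|X-Z\|_F=\dist(X,M)$, $ZZ^\top=M$, and $\sigma_r(Z)^2=\lambda_r(M)=\sigma_r(M)$ (the last equality since $M\succeq 0$). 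The Procrustes characterization of $R^\star$ (equivalently, first-order optimality of $R\mapsto\|X-YR\|_F$) forces $X^\top Z$ to be symmetric and positive semidefinite; consequently, writing $\Delta:=X-Z$, the $r\times r$ matrix $Z^\top\Delta=X^\top Z-Z^\top Z$ is symmetric.

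Next I would use the identity $XX^\top-M=XX^\top-ZZ^\top=\Delta Z^\top+Z\Delta^\top+\Delta\Delta^\top$. With $A:=\Delta Z^\top+Z\Delta^\top$ and $E:=\Delta\Delta^\top$,
\[
\|XX^\top-M\|_F^2=\|A\|_F^2+2\langle A,E\rangle+\|E\|_F^2 .
\]
The symmetry of $Z^\top\Delta$ makes the cross term inside $\|A\|_F^2$ nonnegative, $\langle\Delta Z^\top,Z\Delta^\top\rangle=\|Z^\top\Delta\|_F^2\ge0$, so $\|A\|_F^2\ge 2\|\Delta Z^\top\|_F^2$, and $\|\Delta Z^\top\|_F^2=\tr(\Delta^\top\Delta\,Z^\top Z)\ge\sigma_r(Z)^2\|\Delta\|_F^2=\sigma_r(M)\|\Delta\|_F^2$ because $Z^\top Z\succeq\sigma_r(Z)^2 I_r$. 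The same symmetry plus the cyclic property of the trace reduces $\langle A,E\rangle$ to a multiple of $\tr\big((Z^\top\Delta)\,\Delta^\top\Delta\big)$, which is sign-indefinite but can be controlled against $\|\Delta Z^\top\|_F$ and $\|\Delta\|_F$; meanwhile $\|E\|_F^2=\|\Delta^\top\Delta\|_F^2\ge0$ is available as a buffer.

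The last step is to combine $\|A\|_F^2$, the indefinite cross term, and $\|E\|_F^2$ so that the net lower bound is $2(\sqrt2-1)\,\sigma_r(M)\|\Delta\|_F^2$: one introduces a free scalar $t>0$, splits the cross term by a weighted Young's inequality against the two square terms (equivalently, treats the regimes $\|\Delta\|_F^2\lesssim\sigma_r(M)$ and $\|\Delta\|_F^2\gtrsim\sigma_r(M)$ separately), and optimizes over $t$; the optimal choice is exactly what produces the constant $2(\sqrt2-1)$. Substituting $\|\Delta\|_F=\dist(X,M)$ then yields \eqref{eq:dist_change}. I expect this balancing — extracting the \emph{sharp} constant $2(\sqrt2-1)$ rather than some weaker absolute constant — to be the only delicate point; everything preceding it is bookkeeping around the expansion of $XX^\top-ZZ^\top$ and the symmetry of $Z^\top\Delta$. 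If one is content to cite, this is exactly Lemma~5.4 of \cite{tu2016low}.
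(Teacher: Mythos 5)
The paper does not prove this lemma at all; it is restated verbatim from \cite{tu2016low} and \cite{zhu2018global} (it is Lemma~5.4 of the former), so your observation that one can simply cite is exactly what the paper does. There is therefore no ``paper proof'' to compare against; what remains is to assess your independent sketch.

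Your setup and bookkeeping are all correct: the Procrustes alignment, the orbit $\{YR\}$, the decomposition $XX^\top-M=\Delta Z^\top+Z\Delta^\top+\Delta\Delta^\top$, the identity $\langle\Delta Z^\top,Z\Delta^\top\rangle=\|Z^\top\Delta\|_F^2\ge 0$ coming from symmetry of $Z^\top\Delta$, and $\|\Delta Z^\top\|_F^2=\tr(\Delta^\top\Delta\,Z^\top Z)\ge\sigma_r(M)\|\Delta\|_F^2$. The place where the sketch is thinner than it should be is your treatment of the indefinite cross term. You note that Procrustes makes $X^\top Z$ \emph{positive semidefinite}, but then you carry forward only the weaker consequence that $Z^\top\Delta$ is symmetric, and the balancing you describe (weighted Young against $\|A\|_F^2$ and $\|E\|_F^2$) uses only that symmetry plus Cauchy--Schwarz. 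That is not enough to reach the sharp constant $2(\sqrt2-1)$ once $\|\Delta\|_F^2$ is comparable to $\sigma_r(M)$. One can see this already in the rank-one case: with $P=z^\top\Delta$, $s=\|z\|^2$, $d=\|\Delta\|^2$, the expansion is $2ds+2P^2+4Pd+d^2$, and the unconstrained/Cauchy--Schwarz minimizer $P=-\sqrt{sd}$ gives a value strictly below $2(\sqrt2-1)sd$ when $d$ is near $\sqrt2\,s$. It is exactly the PSD constraint $z^\top x=s+P\ge 0$, i.e.\ $P\ge -s$ (in general $Z^\top\Delta\succeq -Z^\top Z$), that clips the bad direction and makes the minimum equal to $(d-s)^2+s^2\ge 2(\sqrt2-1)sd$, with equality at $d=\sqrt2\,s$. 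So the positive-semidefiniteness of $X^\top Z$ is load-bearing and must be invoked in the large-$\Delta$ regime; a Young split alone, with the symmetry of $Z^\top\Delta$ but without this one-sided bound on it, does not close the argument. Everything else in your sketch is sound, and since the paper itself only cites, deferring to \cite{tu2016low} is fully acceptable here.
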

Also, given Assumption 5, we have
\begin{equation}
	\label{eq:go_nabla_zero}
	\nabla_M f(M^w,w) = 0
\end{equation}

First, we establish that the PL inequality holds in a neighborhood of the global minimizer.
\begin{lemma}
	\label{lem:pl_ineq}
	Consider the global minimizer $M^w$ of \eqref{eq:unfactored_noisy_problem}. There exists a constant $\mu > 0$ such that the PL inequality:
	\begin{equation}
		\label{eq:pl_ineq}
		\frac{1}{2} \| \nabla_X h(X,w)\|^2_F \geq \mu (h(X,w)-f(\projr(M^w), w)),
	\end{equation}
	holds for all $X \in \mathbb{R}^{n \times r}$ satisfying:
	\begin{align}
		\label{eq:pl_region}
			\dist(X,M^w) < \max\{\sqrt{2(\sqrt 2 -1)} \sqrt{1-(\delta+\zeta_2 q)^2} (\sigma_r(M^w))^{1/2}-D_r,0 \}
	\end{align}
	and
	\[
		D_r \leq \dist(X,\projr(M^w)),
	\]
	for $q < (1-\delta)/\zeta_2$. 
\end{lemma}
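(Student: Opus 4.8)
The plan is to prove the PL inequality via the usual ``regularity condition'' route, adapted to the noisy setting. First I would record the factored gradient
\[
\nabla_X h(X,w) = 2\,\nabla_M f(XX^\top,w)\,X ,
\]
and observe that combining Assumption~2 (RIP of $f(\cdot,0)$) with the Hessian perturbation bound \eqref{eq:noise_perturb_hess} of Assumption~3 shows that $f(\cdot,w)$ is $\mu_s$-restricted strongly convex and $L_s$-restricted smooth along directions of rank at most $2r$, with $\mu_s := 1-\delta-\zeta_2 q$ and $L_s := 1+\delta+\zeta_2 q$; the hypothesis $q<(1-\delta)/\zeta_2$ is exactly what makes $\mu_s>0$. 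Next, writing $\hat M = XX^\top$, let $Z^*\in\argmin_Z\{\|X-Z\|_F : ZZ^\top = \projr(M^w)\}$ be the aligned factorization of $\projr(M^w)$ and set $\Delta = X-Z^*$, so $\|\Delta\|_F = \dist(X,\projr(M^w))$ and, by construction, $\hat X^\top\!\mat(\cdot)$-type alignment identities hold. Note $\sigma_r(\projr(M^w)) = \sigma_r(M^w)$, which will feed the threshold in \eqref{eq:pl_region}.

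Since both $\hat M$ and $\projr(M^w)$ have rank $\le r$, the segment between them lies among matrices of rank $\le 2r$, so restricted strong convexity gives
\[
h(X,w) - f(\projr(M^w),w) \le \langle \nabla_M f(\hat M,w),\, \hat M - \projr(M^w)\rangle - \tfrac{\mu_s}{2}\,\|\hat M - \projr(M^w)\|_F^2 .
\]
I would then use the exact identity $\hat M - \projr(M^w) = X\Delta^\top + \Delta X^\top - \Delta\Delta^\top$, together with symmetry of $\nabla_M f$ and the gradient formula, to rewrite $\langle \nabla_M f(\hat M,w), X\Delta^\top + \Delta X^\top\rangle = \langle \nabla_X h(X,w),\Delta\rangle \le \|\nabla_X h(X,w)\|_F\,\|\Delta\|_F$ by Cauchy--Schwarz. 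The leftover term $-\langle \nabla_M f(\hat M,w),\Delta\Delta^\top\rangle$ is bounded by $\|\nabla_M f(\hat M,w)\|_F\,\|\Delta\|_F^2$, and $\|\nabla_M f(\hat M,w)\|_F$ is estimated through restricted smoothness anchored at $M^w$, where $\nabla_M f(M^w,w)=0$ by \eqref{eq:go_nabla_zero}, routed through $\projr(M^w)$; this produces a bound of the form $\mathrm{const}\cdot L_s\big(\|\hat M - \projr(M^w)\|_F + D_r\big)$, the $D_r = \|M^w - \projr(M^w)\|_F$ accounting for $\projr(M^w)$ not being a critical point.

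The remaining step is bookkeeping. Lemma~\ref{lem:dis_conversion}, applied with $M = \projr(M^w)$, gives $\|\hat M - \projr(M^w)\|_F^2 \ge 2(\sqrt2-1)\,\sigma_r(M^w)\,\|\Delta\|_F^2$, which I would use both to lower-bound the strongly convex quadratic term $\tfrac{\mu_s}{2}\|\hat M - \projr(M^w)\|_F^2$ and to convert the $O(\|\Delta\|_F^3)$ and $O(D_r\|\Delta\|_F^2)$ error contributions from the previous paragraph. Collecting all estimates, the upper bound on $h(X,w)-f(\projr(M^w),w)$ reduces to $\|\nabla_X h(X,w)\|_F\,\|\Delta\|_F$ minus a nonnegative multiple of $\|\Delta\|_F^2$ precisely when $\|\Delta\|_F$ stays below the threshold $\sqrt{2(\sqrt2-1)}\,\sqrt{1-(\delta+\zeta_2 q)^2}\,\sigma_r(M^w)^{1/2} - D_r$ of \eqref{eq:pl_region} --- the factor $\sqrt{1-(\delta+\zeta_2 q)^2} = \sqrt{\mu_s L_s}$ being exactly the geometric mean of the strong-convexity and smoothness moduli that dictates when the quadratic term wins. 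In that regime one obtains $h(X,w)-f(\projr(M^w),w)\le \|\nabla_X h(X,w)\|_F\,\|\Delta\|_F$, and combining this with the complementary lower bound $h(X,w)-f(\projr(M^w),w)\ge c'\|\Delta\|_F^2$ (again from restricted strong convexity, the bias term being absorbed once $D_r\le\dist(X,\projr(M^w))$) yields $\|\Delta\|_F^2 \le (h(X,w)-f(\projr(M^w),w))/c'$ and hence $\tfrac12\|\nabla_X h(X,w)\|_F^2 \ge \mu\,(h(X,w)-f(\projr(M^w),w))$ with $\mu = c'/2 > 0$.

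\textbf{Main obstacle.} The delicate part is that $\projr(M^w)$ is not a critical point when $\rk(M^w)>r$, so $\nabla_M f(\projr(M^w),w)\ne 0$ and several error terms carry an $O(D_r)$ bias. One must (i) anchor the gradient estimates at $M^w$ (where the gradient vanishes) and route them through $\projr(M^w)$ so that only rank-$\le 2r$ directions appear and RIP can be invoked, and (ii) check that the stated neighborhood radius is small enough for the $\mu_s$-quadratic term to dominate the $O(\|\Delta\|_F^2)$, $O(\|\Delta\|_F^3)$, and $O(D_r\|\Delta\|_F^2)$ error terms simultaneously. The hypothesis $D_r\le\dist(X,\projr(M^w))$ is exactly what keeps the bias comparable to, rather than larger than, the genuine quadratic terms, and carrying the constants through this comparison to land on the precise threshold in \eqref{eq:pl_region} is the technically demanding portion of the argument.
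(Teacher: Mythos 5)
Your proposal takes a genuinely different route from the paper. The paper proves Lemma~\ref{lem:pl_ineq} by contradiction: it assumes $\tfrac12\|\nabla_X h(X,w)\|_F^2 < \mu(h(X,w)-f(\projr(M^w),w))$, uses restricted smoothness and Assumption~4 to derive $\|\nabla_X h(X,w)\|_F \le \mu'\|XX^\top-\projr(M^w)\|_F$, and then feeds this into an SDP relaxation analogous to \eqref{eq:local_lmi_delta} together with a dual-feasibility bound (Lemmas~12 and~14 of \cite{bi2021local}) to reach a contradiction with the RIP constant. Your approach is a direct ``regularity condition'' argument that expands $h(X,w)-f(\projr(M^w),w)$ around $\hat M=XX^\top$ and bounds the first-order term by Cauchy--Schwarz.

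There are two genuine gaps. First, you assert that the threshold $\sqrt{1-(\delta+\zeta_2 q)^2} = \sqrt{\mu_s L_s}$ ``emerges exactly'' from your bookkeeping, but this is not substantiated. A direct strong-convexity/smoothness argument as you set it up produces a radius governed by $\mu_s = 1-\delta-\zeta_2 q$ (or some arithmetic combination with $\rho$), not by the geometric mean $\sqrt{\mu_s L_s}$. In the paper, the $\sqrt{1-\delta^2}$ factor arises specifically from the angle estimate $\cos\theta\ge\sqrt{1-\tau}$ in the SDP dual-certificate analysis (the computation that equates $\tfrac{1-\delta-\zeta_2 q}{1+\delta+\zeta_2 q}$ with $\tfrac{1-q_1}{1+q_1}$ and solves $q_1>\delta+\zeta_2 q$). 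Without reproducing that sharper geometry, your direct method is very likely to land on a strictly smaller ball than \eqref{eq:pl_region}. Second, your closing step hinges on a ``complementary lower bound'' $h(X,w)-f(\projr(M^w),w)\ge c'\|\Delta\|_F^2$. This is not justified and is not true in general: $\projr(M^w)$ is merely a metric projection of $M^w$, not a minimizer of $f(\cdot,w)$ over rank-$\le r$ matrices, so $h(X,w)-f(\projr(M^w),w)$ can be negative. The hypothesis $D_r\le\dist(X,\projr(M^w))$ does not automatically ``absorb the bias'' here; one would need to anchor the strong-convexity bound at $M^w$, but then $\hat M - M^w$ can have rank exceeding $2r$ if $\rk(M^w)>r$, so restricted strong convexity does not apply directly. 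The paper avoids both difficulties by never needing such a lower bound: the contradiction argument only requires the smoothness \emph{upper} bound on $h(X,w)-f(\projr(M^w),w)$, and then the SDP machinery already developed for Theorem~\ref{thm:local} does the quantitative work.
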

	
\begin{proof}[Proof of Lemma~\ref{lem:pl_ineq}]
	We prove the Lemma when $C_w \sqrt{1-(\delta+\zeta_2 q)^2}-D_r > 0$, since otherwise it is trivial. Denote $M \coloneqq XX^\top$. First, we fix a constant $\tilde C$ such that:
\begin{equation}\label{eq:tildeC_ineq}
	\dist(X,M^w) \leq \tilde C < C_w \sqrt{1-(\delta+\zeta_2 q)^2} -D_r.
\end{equation}
Then, we define $q_1$ and $q_2$ as follows:
\begin{equation}
	\begin{aligned}
		q_1 = \sqrt{1- \frac{\tilde C^2}{2(\sqrt 2 -1)\sigma_r(M^w)}}, q_2 = \frac{\sqrt 2 \mu'}{\sigma_r(M^w)^{1/2}-\tilde C}.
	\end{aligned}
\end{equation}
Now, both $q_1$ and $q_2$ are nonnegative resulting from the assumption above. Furthermore, we know that $\delta+\zeta_2 q < \sqrt{1- \frac{\tilde C^2}{2(\sqrt 2-1)\sigma_r(M^w)}}$ from \eqref{eq:tildeC_ineq}, then
\begin{equation}
	\label{eq:pl_ineq_q1q2}
	\frac{1-\delta-\zeta_2 q}{1+\delta + \zeta_2 q} > \frac{1-q_1+q_2}{1+q_1},
\end{equation}
for some small enough $\mu'$. Define $\mu = (\mu')^2/(1+\delta+\zeta_2 q + 2\rho)$. First, we make the assumption that:
\begin{equation}
	\label{eq:inverse_pl}
	\frac{1}{2} \| \nabla_X h(X,w)\|^2_F < \mu (h(X,w)-f(\projr(M^w), w)).
\end{equation}
From this assumption, we have:
\begin{align*}
	\mu (h(X,w)-f(\projr(M^w), w)) &\leq \mu \left( \langle \nabla_M f(\projr(M^w),w), M- \projr(M^w) \rangle + \frac{1+\delta+\zeta_2 q}{2}\|M-\projr(M^w)\|^2_F \right) \\
	&\leq \mu \left( \rho \|M^w -\projr(M^w)\|_F \|M-\projr(M^w)\|_F + \frac{1+\delta+\zeta_2 q}{2}\|M-\projr(M^w)\|^2_F \right) \\
	&\leq \mu \left( \rho \|M-\projr(M^w)\|^2_F + \frac{1+\delta+\zeta_2 q}{2}\|M-\projr(M^w)\|^2_F \right).
\end{align*}
due to Taylor's theorem and \eqref{eq:noise_perturb_hess}. So then \eqref{eq:inverse_pl} leads to:
\[
	\frac{1}{2} \| \nabla h(X,w)\|^2_F < \mu(\frac{ (1+\delta+\zeta_2q)}{2} + \rho)\|M - \projr(M^w)\|_F^2.
\]
Therefore, 
\[
	\| \nabla h(X,w)\|_F \leq \mu' \|M-\projr(M^w)\|_F.
\]
Then consider the following optimization problem:
\begin{equation}
	\label{eq:pl_lmi}
	\begin{aligned}
		\min_{\delta,\mathbf{H} \in \mathbb{S}^{n^2}} \quad & \delta \\
\st \quad & \norm{\hat{\mathbf X}^\top \mathbf H \mathbf e} \leq \mu' \|\mathbf e\|, \\
& \text{$\mathbf H$ satisfies the $(\delta+\zeta_2 q)$-$\RIP_{2r}$ property}.
	\end{aligned}
\end{equation}
where $\mathbf e = \vecc(XX^\top - \projr(M^w))$. If we denote the optimal value of \eqref{eq:pl_lmi} as $\delta^*_f(X,\mu')$, then $\delta^*_f(X,\mu') \leq \delta$ because the constraints of \eqref{eq:pl_lmi} are necessary conditions for \eqref{eq:inverse_pl}, according to Lemma 12 of \cite{bi2021local}. Therefore,
\[
	\frac{1-\delta - \zeta_2 q}{1+\delta+\zeta_2 q} \leq \frac{1-\delta^*_f(X,\mu') - \zeta_2 q}{1+\delta^*_f(X,\mu')+\zeta_2 q}.
\]
Moreover, by the same logic of \eqref{eqn:delta_eta}, we know that $\eta_f^*(X,\mu') \geq \frac{1-\delta_f^*(X,\mu') - \zeta_2 q}{1+\delta_f^*(X,\mu') + \zeta_2 q}$, where $\eta_f^*(X,\mu')$ is the optimal value of the optimization problem:
\begin{equation}
\begin{aligned}
\max_{\eta,\hat{\mathbf H}} \quad & \eta \\
\st \quad & \norm{\hat{\mathbf X}^\top\hat{\mathbf H}\mathbf e} \leq \mu' \|\mathbf e\|, \\
& \eta I_{n^2} \preceq \hat{\mathbf H} \preceq I_{n^2}.
\end{aligned}
\end{equation}
Lemma 14 of \cite{bi2021local} gives:
\[
	\eta_f^*(X,\mu') \leq \frac{1-q_1+q_2}{1+q_1},
\]
therefore making a contradiction to \eqref{eq:pl_ineq_q1q2}, subsequently proving \eqref{eq:pl_ineq}. 
\end{proof}

\begin{proof}[Proof of Theorem \ref{thm:linear_conv}]
If we certify that:
	\begin{equation}
		\label{eq:pl_ineq_matrix}
		\frac{\|XX^\top - M^w\|_F}{C_w} <  C_w \sqrt{1-(\delta+\zeta_2 q)^2}  -D_r
	\end{equation}
	for any given $X \in \mathbb{R}^{n \times r}$, then a direct substitution can certify that \eqref{eq:pl_region} holds for $X$, since by Lemma \ref{lem:dis_conversion},
	\[
		\dist(X,M) \leq \frac{\|XX^\top - M^w\|_F}{C_w}.
	\]
	Therefore, the certification of  \eqref{eq:pl_ineq_matrix} means that the PL inequality \eqref{eq:pl_ineq} holds for this given $X$. Given that \eqref{eq:pl_init} is satisfied, then if this inequality holds:
	\begin{equation}
		\label{eq:pl_ineq_matrix_simp}
		\|XX^\top - M^w\|_F \leq \sqrt{\frac{1+\delta+\zeta_2 q}{1 -\delta - \zeta_2 q}} \|X_0 X_0^\top - M^w \|_F,
	\end{equation}
	 \eqref{eq:pl_ineq_matrix} will also hold, because:
	 \[
	 	\sqrt{\frac{1+\delta+\zeta_2 q}{1 -\delta - \zeta_2 q}} \|X_0 X_0^\top - M^w \|_F \leq C^2_w\sqrt{1-(\delta+\zeta_2 q)^2} -C_w D_r.
	 \]
	 Thus, for the remainder of the proof, we aim to certify that starting from $X_0$, if we apply the gradient descent algorithm, \eqref{eq:pl_ineq_matrix_simp} will be satisfied every step along this trajectory.
	
	In order to do so, we use Taylor's expansion and \eqref{eq:go_nabla_zero} to obtain
	\[
		f(M,w) - f(M^w,w) =\frac{[\nabla^2 f(N,w)](M-M^w,M-M^w)}{2},
	\]
	where $N$ is some convex combination of $M$ and $M^w$, and $M \in \mathbb{R}^{n \times n}$ is any matrix of rank at most $r$. In light of the RIP property of the function and \eqref{eq:noise_perturb_hess}, one can write: 
	\[
		\frac{1-\delta-\zeta_2 q}{2} \|M - M^w\|^2_F \leq f(M,w) - f(M^w,w) \leq \frac{1+\delta +\zeta_2 q}{2} \|M - M^w\|^2_F.
	\]
	This means that if $M_1,M_2 \in \mathbb{R}^{n \times n}$ are two matrices of rank at most $r$ with $f(M_1,w) \leq f(M_2,w)$, then:
	\begin{equation}
		\|M_1 - M^w \|_F \leq \sqrt{\frac{1+\delta+\zeta_2 q}{1 -\delta - \zeta_2 q}} \|M_2 - M^w \|_F,
	\end{equation}
	because $f(M_1,w) - f(M^w,w) \leq f(M_2,w) - f(M^w,w)$.
	
	Thus, one can conclude that $f(X_t X_t^\top,w) \leq f(X_0 X_0^\top,w) \ \forall t$, where $X_t$ denotes the $t^{\text{th}}$ step of the gradient descent algorithm starting from $X_0$. Hence, \eqref{eq:pl_ineq_matrix_simp} follows for all $X_t$.
	
	Conveniently, Lemma 11 in \cite{bi2021local} shows that $f(X_t X_t^\top,0) \leq f(X_{t-1} X_{t-1}^\top,0)$ for all $t \geq 0$. However, this result can be extended to:
	\[
		f(X_t X_t^\top,w) \leq f(X_{t-1} X_{t-1}^\top,w),
	\]
	by making
	\begin{align*}
		1/\eta \geq 12 \rho r^{(1/2)}\left(\sqrt{\frac{1+\delta+\zeta_2 q}{1 -\delta - \zeta_2 q}} \|X_0 X_0^\top - M^w \|_F + \|M^w\|_F \right),
	\end{align*}
	since $\nabla f(\cdot,w)$ is now a $\rho$-Lipschitz continuous function. Given \eqref{eq:pl_init}, a sufficient condition to the above inequality is that:
	\[
		\eta \leq \left(12 \rho r^{(1/2)}\left(2(\sqrt 2 -1)\sqrt{(1-(\delta+\zeta_2 q)^2}  + \|M^w\|_F \right)\right)^{-1}
	\]
	
	This finally means that the PL inequality \eqref{eq:pl_ineq} is established for the entire trajectory starting from $X_0$. Now, applying Theorem 1 in \cite{karimi2016linear} gives:
	\[
		h(X_t,w)-f(\projr(M^w),w) \leq (1-\mu \eta)^t(h(X_0,w)-f(\projr(M^w),w)),
	\]
	which implies a linear convergence as desired.
	
\end{proof}

\subsection{Proof Sketches in Section \ref{sec:strict_saddle} }
\label{sec:appendix_strict_saddle}
The proof of Theorem \ref{thm:strict_saddle} is highly similar to that of Theorem 7 in \cite{zhang2021general}, albeit with a number of differences. In this section, we will only highlight the differences, since everything else follows in the same manner.

First and foremost, we replace $\delta$ with $\delta+\zeta_2 q$ in all of the proofs since in our noisy formulation, the problem is $(\delta+\zeta_2 q)$-RIP$_{2r,2r}$ instead.

Then, we introduce the following Lemma in lieu of Lemma 6 in \cite{zhang2021general} since $\nabla_M f(M^*,w) \neq 0$ in the noisy formulation:
\begin{lemma}
	\label{lem:new_lemma6}
	Given a constant $\epsilon > 0$, an arbitrary $X \in \mathbb{R}^{n \times r}$, and the ground truth solution $M^* \in \mathbb{R}^{n \times n}$ of \eqref{eq:unfactored_noisy_problem}, if 
	\begin{equation}\label{eq:M_fro_bound}
		\|XX^\top\|^2_F \geq \max \left\{ \frac{2(1+\delta+\zeta_2 q )}{1-\delta - (\zeta_2+\zeta_D) q} \|M^*\|^2_F, (\frac{2\lambda \sqrt r}{1-\delta - (\zeta_2+\zeta_D) q})^{4/3} \right\},
	\end{equation}
	then
	\[
		\|\nabla_X h(X,w)\|_F \geq \lambda,
	\]
	where $\zeta_D = \zeta_1/D$ and $D$ is a constant such that 
	\begin{equation}\label{eq:D_bound}
		D^2 \leq (\frac{2\lambda \sqrt r}{1-\delta - (\zeta_2+\zeta_D) q})^{4/3}.
	\end{equation}
\end{lemma}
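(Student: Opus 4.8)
The plan is to run the standard coercivity argument --- a large $\|XX^\top\|_F$ forces a large gradient --- but with two modifications relative to the noiseless Lemma~6 of \cite{zhang2021general}: replacing the RIP constant $\delta$ of the noiseless Hessian by the effective constant $\delta+\zeta_2 q$ of $\nabla^2_M f(\cdot,w)$, and handling the fact that $\nabla_M f(M^*,w)$ no longer vanishes, which produces one extra term that I will absorb using the auxiliary scale $D$. Put $M:=XX^\top$. Since $\nabla_M f(\cdot,w)$ is symmetric, $\nabla_X h(X,w)=2\,\nabla_M f(M,w)\,X$, so Cauchy--Schwarz gives
\[
\|\nabla_X h(X,w)\|_F \;\ge\; \frac{\langle \nabla_X h(X,w),X\rangle}{\|X\|_F}\;=\;\frac{2\,\langle \nabla_M f(M,w),M\rangle}{\|X\|_F},
\]
and, since $X$ has at most $r$ nonzero singular values, $\|X\|_F^2\le\sqrt r\,\|M\|_F$. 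Hence it is enough to prove a bound of the form $\langle \nabla_M f(M,w),M\rangle\ge c_0\bigl(1-\delta-(\zeta_2+\zeta_D)q\bigr)\|M\|_F^2$ for an absolute constant $c_0>0$: combined with the two displays this yields $\|\nabla_X h(X,w)\|_F\ge 2c_0\bigl(1-\delta-(\zeta_2+\zeta_D)q\bigr)\|M\|_F^{3/2}r^{-1/4}$, and the second entry of the $\max$ in \eqref{eq:M_fro_bound} is exactly what forces this quantity to be at least $\lambda$.

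To lower-bound $\langle \nabla_M f(M,w),M\rangle$ I would split it as $\langle \nabla_M f(M,w)-\nabla_M f(M^*,w),M\rangle+\langle \nabla_M f(M^*,w),M\rangle$. The second term is the only place the criticality of $M^*$ enters: by \eqref{eq:gt_nabla_zero} it equals $\langle \nabla_M f(M^*,w)-\nabla_M f(M^*,0),M\rangle$, whose absolute value is at most $\zeta_1 q\,\|M\|_F$ by \eqref{eq:noise_perturb_grad}. For the first term the integral form of the mean value theorem rewrites it as $\int_0^1[\nabla^2_M f(M_t,w)](M-M^*,M)\,dt$ with $M_t:=(1-t)M^*+tM$ of rank $\le 2r$; by Assumption~2 and \eqref{eq:noise_perturb_hess}, each $\nabla^2_M f(M_t,w)$ obeys the $(\delta+\zeta_2 q)$-$\RIP_{2r,2r}$ inequalities. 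Decomposing $M=(M-M^*)+M^*$ in the second slot, the diagonal part $[\nabla^2_M f(M_t,w)](M-M^*,M-M^*)$ is $\ge(1-\delta-\zeta_2 q)\|M-M^*\|_F^2$, while the cross part is treated by polarization,
\[
[\nabla^2_M f(M_t,w)](M-M^*,M^*)=\tfrac14\bigl([\nabla^2_M f(M_t,w)](M,M)-[\nabla^2_M f(M_t,w)](M-2M^*,M-2M^*)\bigr),
\]
after which the $\RIP_{2r,2r}$ bounds applied to the rank-$\le 2r$ matrices $M$ and $M-2M^*$ control it. Expanding $\|M-M^*\|_F^2$ and $\|M-2M^*\|_F^2$ in terms of $\|M\|_F^2,\|M^*\|_F^2,\langle M,M^*\rangle$ and using $\langle M,M^*\rangle\ge 0$ (both $M=XX^\top$ and $M^*$ are positive semidefinite) to discard a favorably-signed term, one arrives at
\[
\langle \nabla_M f(M,w),M\rangle \;\ge\; c_1\|M\|_F^2-c_2\|M\|_F\|M^*\|_F-c_3\|M^*\|_F^2-\zeta_1 q\,\|M\|_F,
\]
with $c_1,c_2,c_3$ explicit constants depending only on $\delta+\zeta_2 q$ and $c_1>0$; up to the residual term $-\zeta_1 q\|M\|_F$ this is the noiseless bound of Lemma~6 of \cite{zhang2021general} under $\delta\mapsto\delta+\zeta_2 q$.

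Two steps finish the proof. First, since \eqref{eq:M_fro_bound} together with \eqref{eq:D_bound} forces $\|M\|_F\ge D$, the residual term is absorbed as $\zeta_1 q\,\|M\|_F=\zeta_D q\,D\,\|M\|_F\le\zeta_D q\,\|M\|_F^2$ with $\zeta_D=\zeta_1/D$ --- this is precisely why $1-\delta-(\zeta_2+\zeta_D)q$, and not $1-\delta-\zeta_2 q$, appears in the denominators of the statement. Second, the first entry of the $\max$ in \eqref{eq:M_fro_bound}, namely $\|M\|_F^2\ge\frac{2(1+\delta+\zeta_2 q)}{1-\delta-(\zeta_2+\zeta_D)q}\|M^*\|_F^2$, is calibrated so that the two $M^*$-cross terms consume at most a fixed fraction of the quadratic term, leaving $\langle \nabla_M f(M,w),M\rangle\ge c_0\bigl(1-\delta-(\zeta_2+\zeta_D)q\bigr)\|M\|_F^2$, which combined with the first paragraph gives $\|\nabla_X h(X,w)\|_F\ge\lambda$.

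The only genuinely new obstacle is that $\nabla_M f(M^*,w)$ cannot be assumed to vanish and is controlled only through \eqref{eq:noise_perturb_grad} against a test matrix, which here is the large matrix $M=XX^\top$; this produces a term linear (rather than quadratic) in $\|M\|_F$, and the device of pre-declaring a lower scale $D$ for $\|M\|_F$ (guaranteed by \eqref{eq:M_fro_bound}) and folding $\zeta_1/D$ into the RIP constant is what lets it be reabsorbed cleanly. Everything else is the constant bookkeeping already carried out in \cite{zhang2021general} with $\delta$ uniformly replaced by $\delta+\zeta_2 q$; one need only check that $M$, $M^*$, $M-2M^*$, $M-M^*$ and every $M_t$ on the segment have rank $\le 2r$, so that each invocation of $\RIP_{2r,2r}$ is legitimate.
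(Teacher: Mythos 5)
Your proposal follows essentially the same route the paper takes: split $\langle\nabla_M f(M,w),M\rangle$ into a mean-value-theorem integral plus $\langle\nabla_M f(M^*,w),M\rangle$, control the second piece by $\zeta_1 q\,\|M\|_F$ via \eqref{eq:gt_nabla_zero} and \eqref{eq:noise_perturb_grad}, absorb that linear residual into the quadratic term through the pre-declared scale $D$ (giving the effective constant $1-\delta-(\zeta_2+\zeta_D)q$), and finish with $\|X\|_F\le \sqrt r\,\|M\|_F^{1/2}$ and the second entry of the $\max$ in \eqref{eq:M_fro_bound}. The one place you deviate is the cross term in the integrand: the paper leaves $M$ intact in the second slot and bounds $|[\nabla^2_M f(M_t,w)](M^*,M)|\le(1+\delta+\zeta_2 q)\|M^*\|_F\|M\|_F$ directly by Cauchy--Schwarz for the PSD restricted Hessian form, arriving at the clean lower bound $(1-\delta-\zeta_2 q)\|M\|_F^2-(1+\delta+\zeta_2 q)\|M^*\|_F\|M\|_F-\zeta_1 q\|M\|_F$; you instead decompose $M=(M-M^*)+M^*$ in the second slot and polarize the cross piece against $M$ and $M-2M^*$. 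Your variant is valid (both $M$ and $M-2M^*$ have rank $\le 2r$) but strictly more laborious: it introduces an additional $\|M^*\|_F^2$ term and worse constants $c_1,c_2,c_3$ (in the regime $\delta+\zeta_2 q<1/3$ the coefficient on $\langle M,M^*\rangle$ is negative, so you cannot actually discard it as "favorably-signed" — you must Cauchy--Schwarz it against $\|M\|_F\|M^*\|_F$), and the first entry of the $\max$ in \eqref{eq:M_fro_bound} would then need to be recalibrated to your $c_2,c_3$ rather than the paper's $1+\delta+\zeta_2 q$. In short, the single-Cauchy--Schwarz bound on the bilinear form is the shortcut the paper uses and the only piece of bookkeeping you didn't reproduce; everything else (the $D$ device and the $\|M\|_F^{3/2}$ endgame) is identical.
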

Note that such $D$ exists since we first require that $1-\delta - (\zeta_2+\zeta_D) q \geq 0$, meaning that $\frac{q\zeta_1}{1-\delta-q\zeta_2 }\leq D$. Moreover, a sufficient condition to \eqref{eq:D_bound} is that $D \leq (2\lambda \sqrt r)^{2/3} $, which can be simultaneously satisfied when $\lambda$ is chosen properly. The introduction of the lower bound $D$ will not affect the remainder of the proof of Theorem \ref{thm:strict_saddle}, since in the later steps, we only require the existence of a constant $C$ such that $\|XX^\top\|_F \leq C^2$ when $\|\nabla_X h(X,w)\|_F \leq \lambda$. Therefore, Lemma \ref{lem:new_lemma6} perfectly fits this role.
\begin{proof}[Proof of Lemma \ref{lem:new_lemma6}]
		Denote $M \coloneqq XX^\top$. Using the RIP property and \eqref{eq:noise_perturb_grad}, we have:
		\begin{align*}
			\langle \nabla_M f(M), M \rangle &= \int_0^1 [\nabla^2 f(M^*+s(M-M^*),w)][M-M^*,M] \de s+ \langle \nabla_M f(M^*,w), M \rangle \\
			&\geq (1-\delta-\zeta_2 q) \|M\|_F^2 - (1+\delta+\zeta_2 q) \|M^*\|_F \|M\|_F- \zeta_1 q \|M\|_F \\
			&= (1-\delta-\zeta_2 q) \|M\|_F^2 - (1+\delta+\zeta_2 q) \|M^*\|_F \|M\|_F- \zeta_D q D \|M\|_F \\
			& \geq (1-\delta-(\zeta_2+\zeta_D) q) \|M\|_F^2 - (1+\delta+\zeta_2 q)\|M^*\|_F \|M\|_F \\
			& \geq \frac{1-\delta-(\zeta_2+\zeta_D) q}{2} \|M\|_F^2,
		\end{align*}
		where the second last inequality results from \eqref{eq:D_bound}, which implies that $D \leq \|M\|_F$; and the last inequality follows from \eqref{eq:M_fro_bound}. Then combining the fact that $\|X\|_F \leq \sqrt r \|M\|^{1/2}_F$, and $\|\nabla_X h(X,w)\|_F \geq \frac{\langle \nabla h(X,w),X \rangle }{\|X\|_F}$ yields the desired fact that
		\begin{equation}
			\begin{aligned}
				\|\nabla_X h(X,w)\|_F \geq \frac{\langle \nabla h(X,w),X \rangle }{\|X\|_F} &= \frac{\langle \nabla_M f(M), M \rangle}{\|X\|_F} \\
				&\geq \frac{(1-\delta-(\zeta_2+\zeta_D) q) \|M\|_F^2}{2\sqrt r \|M\|^{1/2}_F} \\
				&= \frac{1-\delta-(\zeta_2+\zeta_D) q}{2\sqrt r} \|M\|^{3/2}_F \\
				&\geq \lambda.
			\end{aligned}
		\end{equation}
\end{proof} 

Then, utilizing Lemma \ref{lem:new_lemma6}, we can prove Lemma 7 in \cite{zhang2021general} in the same fashion to obtain
\begin{align*}
	\langle \nabla_M f(M,w), M^* - M \rangle &\leq -(1-\delta - \zeta_2 q) \|M-M^*\|^2_F - \langle \nabla_M f(M^*,w), M-M^* \rangle \\
	&\leq -(1-\delta - \zeta_2 q) \|M-M^*\|^2_F + \zeta_1 q \|M-M^*\|_F \\
	& \leq -(1-\delta - \zeta_2 q) \|M-M^*\|^2_F + \zeta_\alpha q (\sqrt{2(\sqrt 2-1)} (\sigma_r(M^*))^{1/2} \alpha) \|M-M^*\|_F \\
	& \leq -(1-\delta - (\zeta_2-\zeta_\alpha) q) \|M-M^*\|^2_F
\end{align*}
for any $M \in \mathbb{R}^{n \times n}$ that satisfies the requirements in Lemma 7 of \cite{zhang2021general}. This is because $\|M-M^*\|_F \geq (\sqrt{2(\sqrt 2-1)} (\sigma_r(M^*))^{1/2} \alpha)$ by the assumption of $\alpha$ and Lemma \ref{lem:dis_conversion}.

The above change will only affect the constant $c$ in Lemma 7, and the new $c$ will become
\[
	c = (\sqrt r \|M^*\|_F)^{-1} (\sqrt 2 -1)(1-\delta-(\zeta_2-\zeta_\alpha)q)  \sigma_r(M^*).
\]
Since the exact value of $c$ is irrelevant and we only need to prove its existence, the rest of the proof follows from the existing procedure. Note that $c > 0$ is guaranteed by the assumption of noise in Theorem \eqref{thm:strict_saddle}. Therefore, Lemma 7 still holds in the noisy case.

Then, we proceed to show that Lemma 8 in \cite{zhang2021general} can also be proved similarly, except for one key difference, which is
\[
	K \coloneqq (1-3\delta -(3 \zeta_2 +2 \zeta_\alpha) q)(\sqrt 2 -1) \sigma_r(M^*) \alpha^2.
\]
To verify this statement, we leverage the inequality
\[
	-\phi(\bar M) \geq f(M,w) - f(M^*,w) - (\delta+\zeta_2 q)\|M-M^*\|_F^2,
\]
and furthermore we now have that
\begin{align*}
	f(M,w) - f(M^*,w) &\geq \langle \nabla_M f(M^*,w), M-M^* \rangle + \frac{1-\delta-\zeta_2 q}{2} \|M-M^*\|_F^2 \\
	&\geq \frac{1-\delta-\zeta_2 q}{2} \|M-M^*\|_F^2 - \zeta_1 q \|M-M^*\|_F^2 \\
	&\geq \frac{1-\delta-\zeta_2 q}{2} \|M-M^*\|_F^2 - \zeta_\alpha q (\sqrt{2(\sqrt 2-1)} (\sigma_r(M^*))^{1/2} \alpha) \|M-M^*\|_F^2 \\
	&\geq \frac{1-\delta-(\zeta_2+2\zeta_\alpha) q}{2} \|M-M^*\|_F^2
\end{align*}
for the same reason elaborated above. Combining the above two inequalities leads to
\[
	-\phi(\bar M) \geq \frac{1-3\delta -(3\zeta_2 + 2\zeta_\alpha)q}{2} \|M-M^*\|_F^2 \geq K.
\]
As assumed in Theorem \ref{thm:strict_saddle}, since $q < \frac{1/3-\delta}{\zeta_2+2\zeta_\alpha/3}$, we know that $K >0$. This is the only required property of $K$ to facilitate the remainder of the proof of Lemma 8 of \cite{zhang2021general}. Therefore, Lemma 8 still holds for the noisy case.

Finally, we choose $C = (\frac{2(1+\delta+\zeta_2 \epsilon )}{1-\delta - (\zeta_2+\zeta_D) \epsilon} \|M^*\|^2_F)^{1/4}$ and invoke Lemmas 6-8 to complete the proof of Theorem \ref{thm:strict_saddle}. Note the $\epsilon$ here is the same $\epsilon$ appeared in the statement of Theorem \ref{thm:strict_saddle}.

\newpage
\section{Additional Numerical Illustration}\label{sec:figures}

\begin{figure}[!h]
    \centering
    \begin{subfigure}{8cm}
    	    \includegraphics[width=\linewidth]{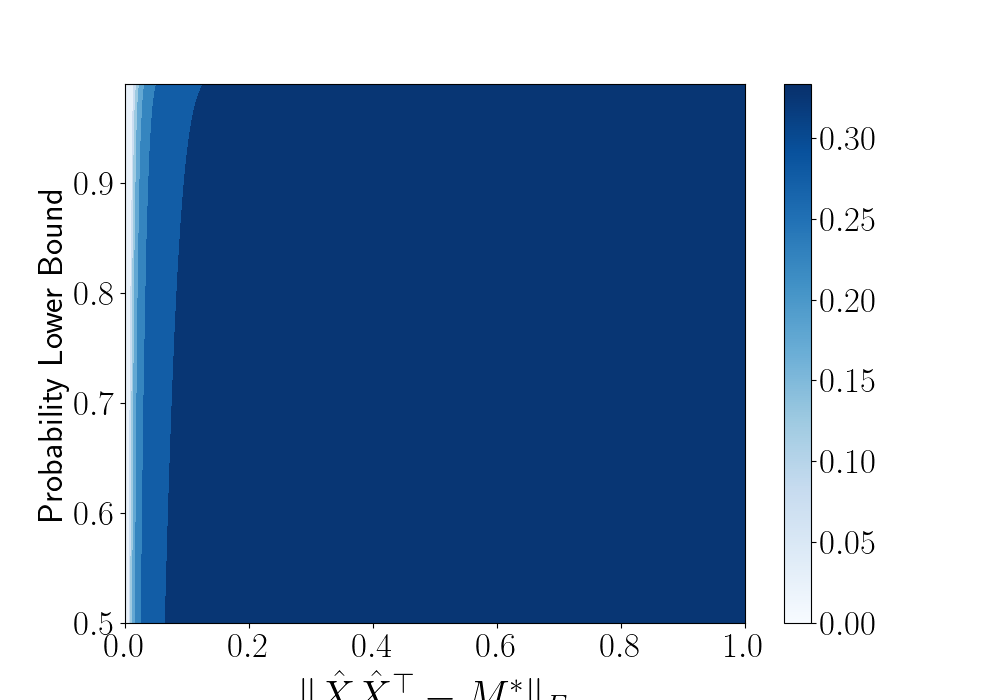}
    \caption{$\delta$ bound in Theorem~\ref{thm:global_local_min} when $\zeta_1 = 0.001$.}
    \end{subfigure} \hspace{2em}
    \begin{subfigure}{8cm}
    	    \includegraphics[width=\linewidth]{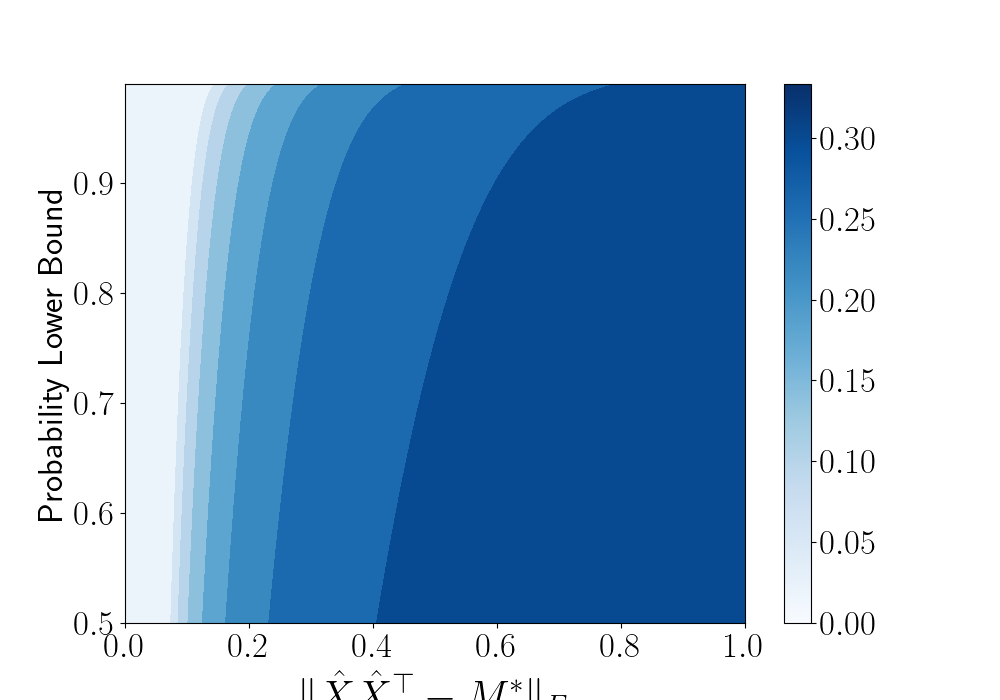}
    \caption{$\delta$ bound in Theorem~\ref{thm:global_local_min} when $\zeta_1 = 0.01$.}
    \end{subfigure}\vspace{2em}\\
    \begin{subfigure}{8cm}
    	    \includegraphics[width=\linewidth]{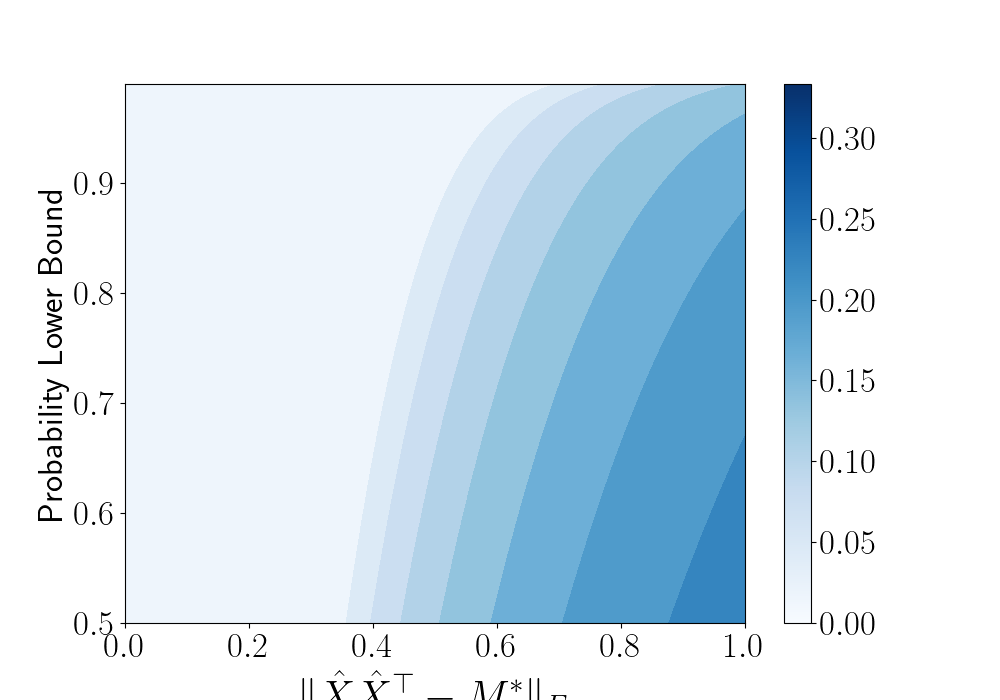}
    \caption{$\delta$ bound in Theorem~\ref{thm:global_local_min} when $\zeta_1 = 0.05$.}
    \end{subfigure} \hspace{2em}
    \begin{subfigure}{8cm}
    	    \includegraphics[width=\linewidth]{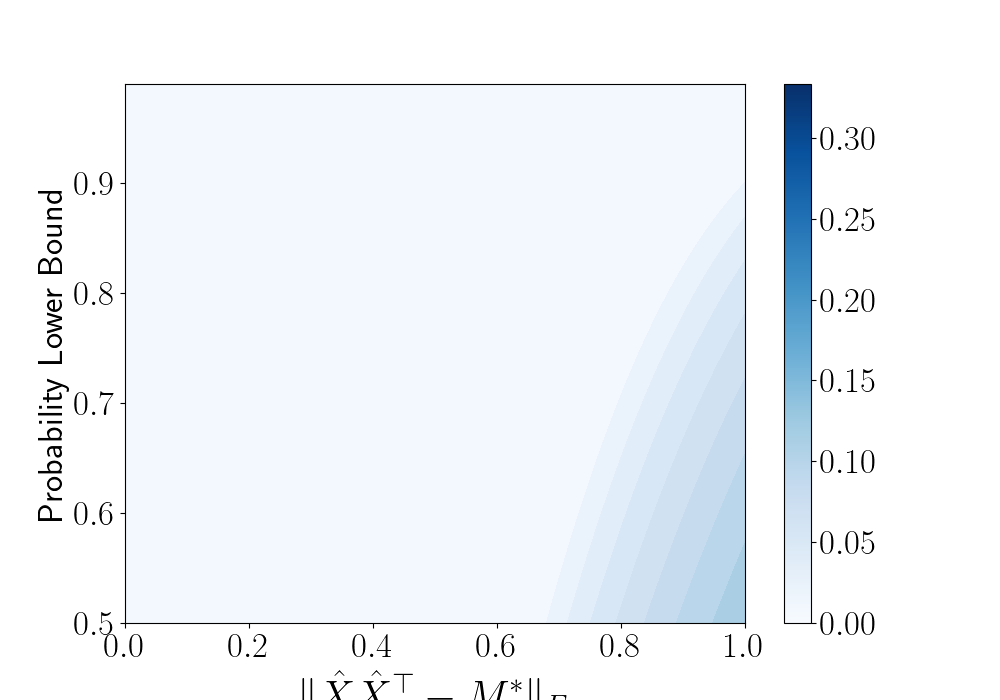}
    \caption{$\delta$ bound in Theorem~\ref{thm:global_local_min} when $\zeta_1 = 0.1$.}
    \end{subfigure}\vspace{2em}\\
    \caption{Comparison of the maximum RIP constants $\delta$ allowed by Theorem~\ref{thm:global_local_min} to guarantee a given bound on the distance $\|\hat X\hat X^\top - M^*\|_F$ for an arbitrary local minimizer $\hat X$ satisfying \eqref{eq:tau_range} with a given probability. In this plot $\zeta_1=0, \sigma = 0.05$ as per the main text.}
\end{figure}

\begin{figure}[!h]
    \centering
    \begin{subfigure}{8cm}
    	    \includegraphics[width=\linewidth]{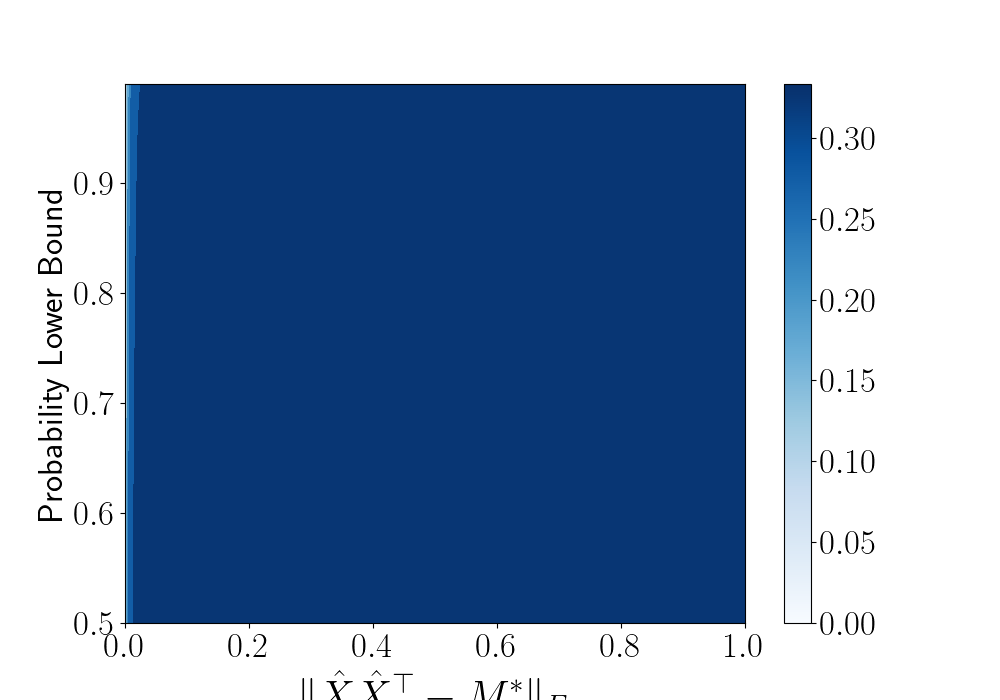}
    \caption{$\delta$ bound in Theorem~\ref{thm:global_local_min} when $\sigma = 0.0001$.}
    \end{subfigure} \hspace{2em}
    \begin{subfigure}{8cm}
    	    \includegraphics[width=\linewidth]{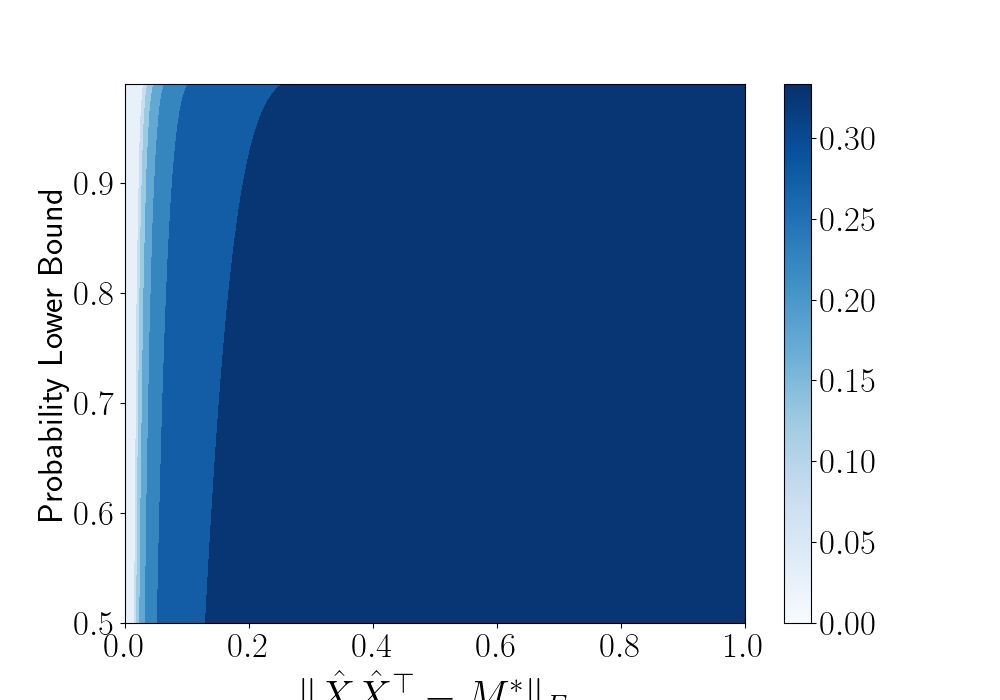}
    \caption{$\delta$ bound in Theorem~\ref{thm:global_local_min} when $\sigma = 0.01$.}
    \end{subfigure}\vspace{2em}\\
    \begin{subfigure}{8cm}
    	    \includegraphics[width=\linewidth]{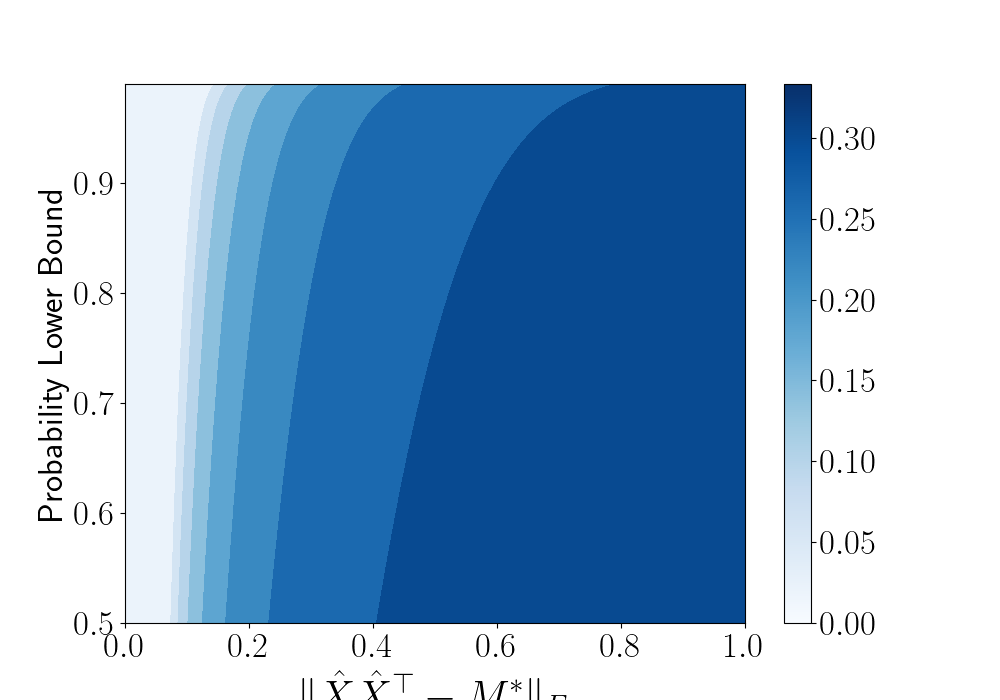}
    \caption{$\delta$ bound in Theorem~\ref{thm:global_local_min} when $\sigma = 0.05$.}
    \end{subfigure} \hspace{2em}
    \begin{subfigure}{8cm}
    	    \includegraphics[width=\linewidth]{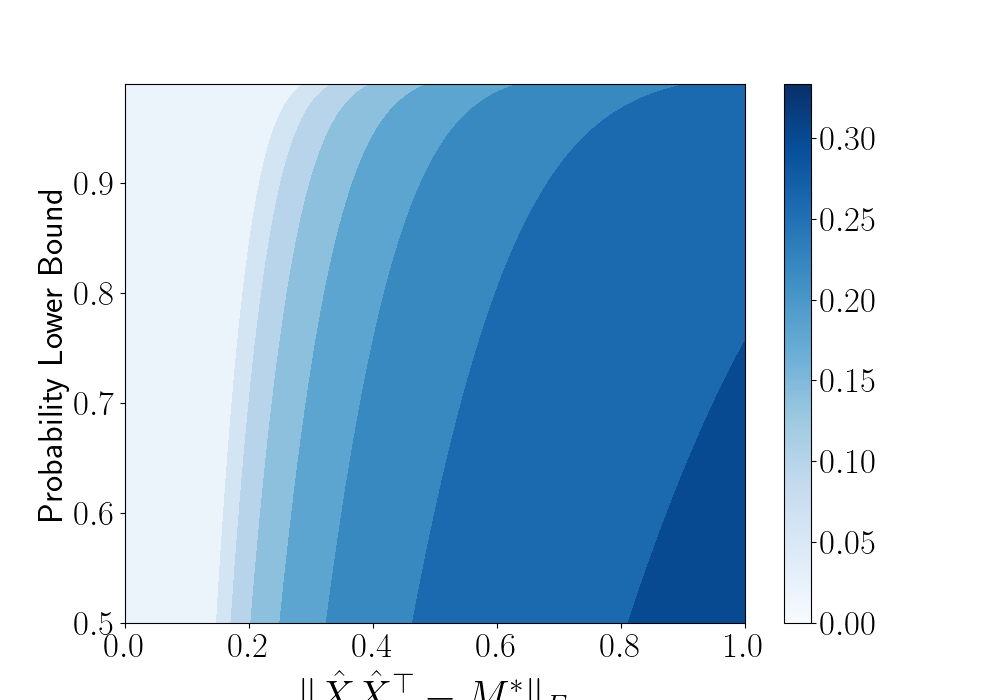}
    \caption{$\delta$ bound in Theorem~\ref{thm:global_local_min} when $\sigma = 0.1$.}
    \end{subfigure}\vspace{2em}\\
    \caption{Comparison of the maximum RIP constants $\delta$ allowed by Theorem~\ref{thm:global_local_min} to guarantee a given bound on the distance $\|\hat X\hat X^\top - M^*\|_F$ for an arbitrary local minimizer $\hat X$ satisfying \eqref{eq:tau_range} with a given probability. In this plot $\zeta_1=0.01, \zeta_2 = 0$.}
\end{figure}

\begin{figure}[!h]
    \centering
    \begin{subfigure}{8cm}
    	    \includegraphics[width=\linewidth]{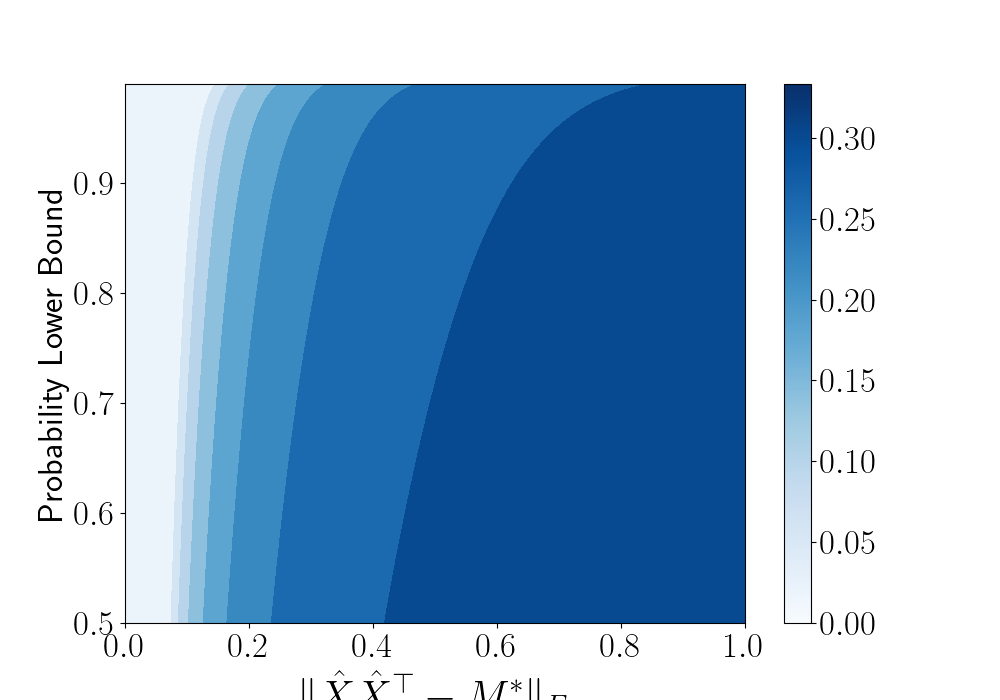}
    \caption{$\delta$ bound in Theorem~\ref{thm:global_local_min} when $\zeta_2 = 0.0005$.}
    \end{subfigure} \hspace{2em}
    \begin{subfigure}{8cm}
    	    \includegraphics[width=\linewidth]{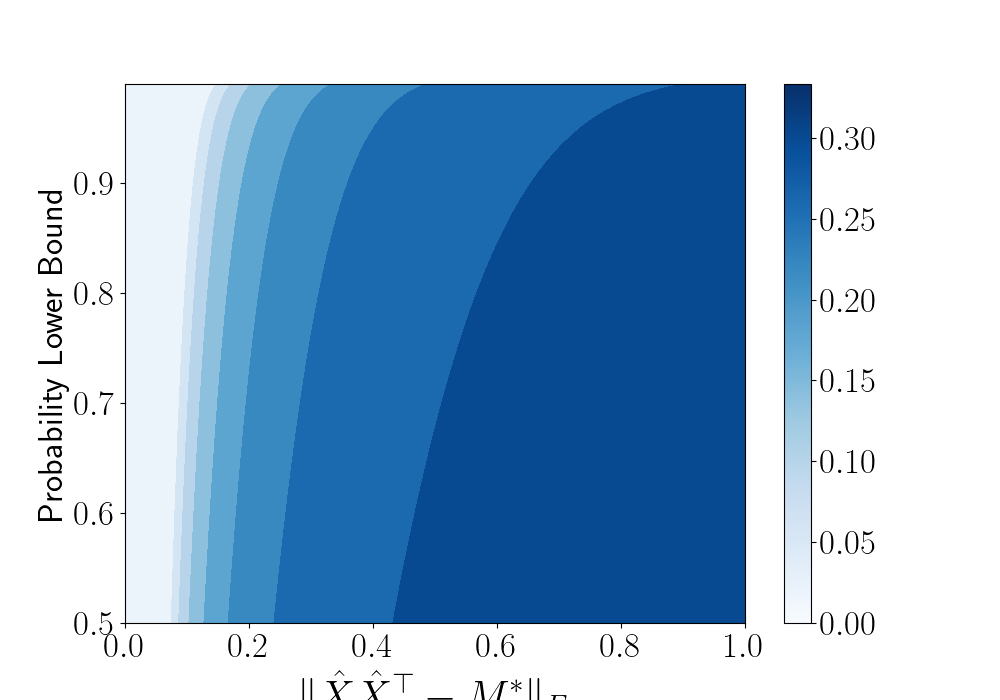}
    \caption{$\delta$ bound in Theorem~\ref{thm:global_local_min} when $\zeta_2 = 0.001$.}
    \end{subfigure}\vspace{2em}\\
    \begin{subfigure}{8cm}
    	    \includegraphics[width=\linewidth]{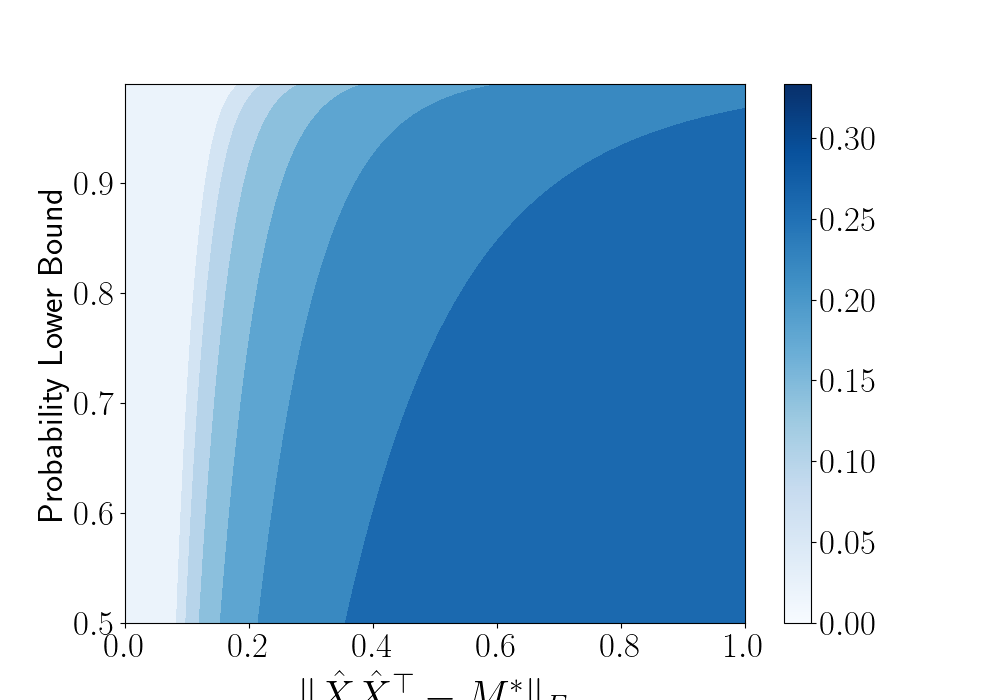}
    \caption{$\delta$ bound in Theorem~\ref{thm:global_local_min} when $\zeta_2 = 0.01$.}
    \end{subfigure} \hspace{2em}
    \begin{subfigure}{8cm}
    	    \includegraphics[width=\linewidth]{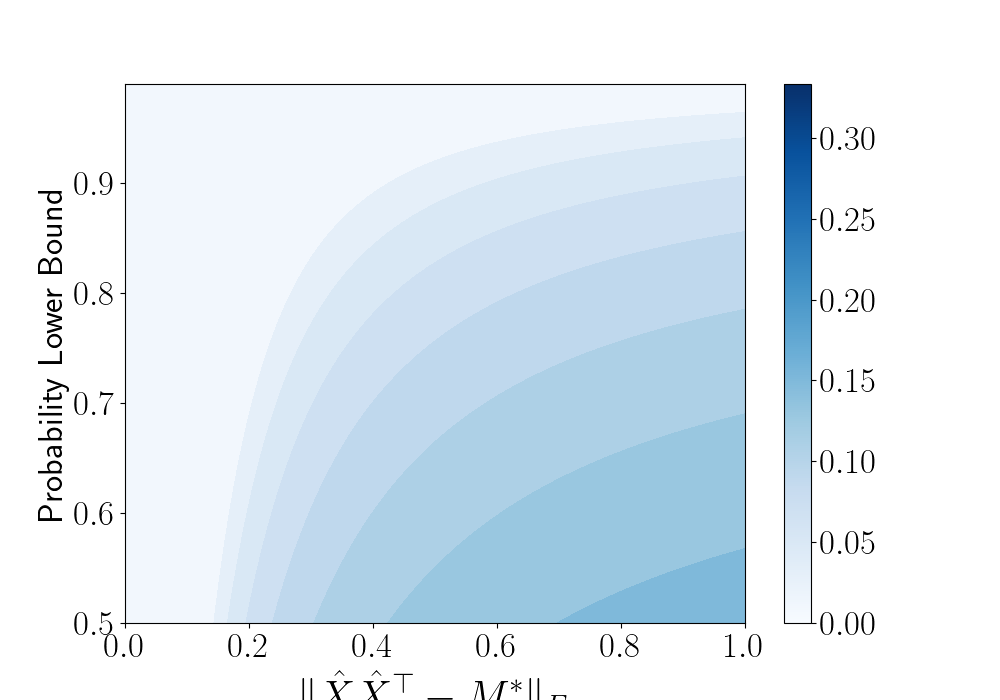}
    \caption{$\delta$ bound in Theorem~\ref{thm:global_local_min} when $\zeta_2 = 0.05$.}
    \end{subfigure}\vspace{2em}\\
    \caption{Comparison of the maximum RIP constants $\delta$ allowed by Theorem~\ref{thm:global_local_min} to guarantee a given bound on the distance $\|\hat X\hat X^\top - M^*\|_F$ for an arbitrary local minimizer $\hat X$ satisfying \eqref{eq:tau_range} with a given probability. In this plot $\zeta_1=0.01, \sigma = 0.05$.}
\end{figure}
\end{document}